\definecolor{lightBlue}{RGB}{136, 247, 244}
\def\pathIn#1#2#3{P^{#1}(#2, #3)}
\def\blueTree{\mathcal B_b}
\def\redForest{\mathcal R}
\def\pathInBlueTree#1#2#3{\pathIn{\blueTree(#1)}{#2}{#3}}
\tikzset{
	string/.style={},
	% nodes:
	redDot/.style={circle, draw, red, fill=red, inner sep=1.5pt},
	blackDot/.style={circle, draw, fill, inner sep=1.5pt},
	transpBetweenUndir/.style={inner sep=0.1pt},
	transp/.style={inner sep=1.5pt},
	% edges:
	blueArc/.style={line width = 0.5mm, cyan, -stealth},
	blueArcRev/.style={line width = 0.5mm, cyan, stealth-},
	redEdge/.style={red, line width=0.5mm},
	redDotted/.style={red, densely dotted, line width=0.5mm},
	blueDotted/.style={cyan, loosely dotted, line width=0.5mm, -stealth},
	blueDottedUndir/.style={cyan, loosely dotted, line width=0.5mm},
}
\definecolor{lightBlue}{RGB}{136, 247, 244}
\def\singleFigure#1#2#3{
    \begin{figure}[!htb]
        \captionsetup{singlelinecheck=off}
        \center
        #1
        \caption[]{#2}
        #3
    \end{figure}
}
\def\doubleFigure#1#2#3#4{
    \begin{figure}[!htb]
        \minipage{0.49\textwidth}
        \raggedright
        #1
        %\centering $\mathcal T_x$
        \endminipage\hfill
        \minipage{0.49\textwidth}
        \raggedleft
        #2
        %\centering $\mathcal T''$
        \endminipage
        \caption{#3}
        #4
    \end{figure}
}
\def\doubleFigureCentered#1#2#3#4{
    \begin{figure}[!htb]
        \minipage{0.49\textwidth}
        \centering
        #1
        %\centering $\mathcal T_x$
        \endminipage\hfill
        \minipage{0.49\textwidth}
        \centering
        #2
        %\centering $\mathcal T''$
        \endminipage
        \caption{#3}
        #4
    \end{figure}
}
\def\tripleFigure#1#2#3#4#5{
    \begin{figure}[!htb]
        \minipage{0.32\textwidth}
        #1
        \endminipage\hfill
        \minipage{0.32\textwidth}
        #2
        \endminipage\hfill
        \minipage{0.32\textwidth}%
        #3
        \endminipage
        \caption{#4}
        #5
    \end{figure}
}
\def\myTikZ#1{
    \begin{tikzpicture}[scale=\scale, baseline=-1mm, -,]
        #1
    \end{tikzpicture}
}
\def\scale{0.46}
\def\lengthEdge{1.7}
\def\xX{(2+2)}
\def\xTwoY{1}
\def\childDegree{20}
\def\vX{2}
\def\epsForL{0.1}
\def\xFourY{(\xTwoY + 3)}
\def\vStarY{(\xTwoY + 2)}
\newcommand{\nodeWithLabel}[4]{ % label, orientation, distance
    \node[#1, label={[label distance=#4]#3:#2}]
}
\def\child#1#2#3#4#5#6{ % name of gen, offsetX (in lengthEdge) from gen vertex, offsetY, degrees, size of Child, colour of child
    \node[redDot] (#1 c) at ({#2*\lengthEdge + cos(#4)*\lengthEdge}, {#3*\lengthEdge + sin(#4)*\lengthEdge}) {};
    \ifnum#5>0
    \node[redDot] (#1 l) at ({#2*\lengthEdge + (cos(#4) + cos(#4+\childDegree))*\lengthEdge}, {#3*\lengthEdge + (sin(#4) + sin(#4+\childDegree))*\lengthEdge}) {};
    \fi
    \ifnum#5>1
    \node[redDot] (#1 r) at ({#2*\lengthEdge + (cos(#4) + cos(#4-\childDegree))*\lengthEdge}, {#3*\lengthEdge + (sin(#4) + sin(#4-\childDegree))*\lengthEdge}) {};
    \fi
    \draw[redEdge]
    \ifnum#5>0
    (#1 c) edge (#1 l)
    \fi
    \ifnum#5>1
    (#1 c) edge (#1 r)
    \fi
    ;
    \draw[#6]
    (#1) edge (#1 c)
    ;
}
\def\nodeWithChild#1#2#3{ % name of gen, offsetX (in lengthEdge) from gen vertex, offsetY, degrees, size of Child, colour of child
}
\def\nodeUWithChild#1{ % label
	\nodeWithLabel{redDot}{#1}{below}{0} (u) at ({-\lengthEdge}, 0) {};
	\child{u}{-1}{0}{120}{2}{redEdge}
}
\def\nodeVPrime#1{ % label
	\nodeWithLabel{redDot}{#1}{below}{-2} (v') at ({(\vX-1)*\lengthEdge}, 0) {};
}
\def\nodeV#1{ % label
	\nodeWithLabel{redDot}{#1}{above}{0} (v) at ({\vX*\lengthEdge}, 0) {};
}
\def\xTwoWithChild#1{ % arc colour
	\nodeWithLabel{redDot}{$x_2$}{left}{-2} (x2) at (0, -1*\xTwoY*\lengthEdge) {};
	\child{x2}{0}{-1*\xTwoY}{270}{1}{#1}
}
\def\xWithChild#1#2{ % label, arc colour
	\nodeWithLabel{redDot}{#1}{below}{0} (x) at ({\xX*\lengthEdge}, 0) {};
	\child{x}{\xX}{0}{0}{2}{#2}
}
\def\yWithChild#1#2{ % label, arc colour
	\nodeWithLabel{redDot}{#1}{left}{-2} (y) at ({\vX*\lengthEdge}, -1*\lengthEdge) {};
	\child{y}{\vX}{-1}{270}{2}{#2}
}
\def\L{
	\node[draw=black, minimum height={(1 + \epsForL)*\lengthEdge*1cm}, minimum width={(0.5*(\xX-\vX) + \epsForL)*\lengthEdge*1cm}, label={290:$L$}] (L) at ({0.5*(\vX + \xX)*\lengthEdge}, 0) {};
}
\def\LPrime{
	\node[draw=black, minimum height={(\xTwoY + \epsForL)*\lengthEdge*1cm}, minimum width={(1 + \epsForL)*\lengthEdge*1cm}, label={240:$L'$}] (L') at (0, 0) {};
}
\def\vStarWithChild#1{% arc colour
	\nodeWithLabel{redDot}{$v_*$}{left}{-2} (v*) at (0, {\vStarY*\lengthEdge}) {};
	\child{v*}{0}{\vStarY}{0}{1}{#1}
}
\def\xFourWithChild{
	\nodeWithLabel{redDot}{$x_4$}{above}{0} (x4) at (0, {\xFourY*\lengthEdge}) {};
	\child{x4}{0}{\xFourY}{180}{2}{blueArcRev}
}
\def\xFourTransp{
	\node[transp] (x4Transp) at (0, {\xFourY*\lengthEdge}) {};
}
\def\xFiveWithChild{
	\nodeWithLabel{redDot}{$x_5$}{right}{0} (x5) at (1*\lengthEdge, {\xFourY*\lengthEdge}) {};
	\child{x5}{1}{\xFourY}{90}{0}{blueArc}
}
\def\r{
	\node[transp] (r) at ({\xX*\lengthEdge}, {\vStarY*\lengthEdge}) {$r$};
}
\def\KPrime{
	\node[transp, text=red] (K') at ({1*\lengthEdge}, {1*\lengthEdge}) {$K'$};
}
\def\KTilde{
	\node[transp, text=red] (KTilde) at ({-1*\lengthEdge}, {\vStarY*\lengthEdge}) {$\tilde K$};
}
\def\commonThings{
	\node[redDot] (c1) at (0, 0) {};
	\node[redDot] (c2) at (0, \xTwoY*\lengthEdge) {};
	\node[redDot] (c3) at (0, {(\xTwoY + 1)*\lengthEdge}) {};
	\node[transp] (tu) at ({-0.6*\lengthEdge}, {0.8*\lengthEdge}) {};
	\node[transp] (tv*) at ({(\xX - 1)*\lengthEdge}, {\vStarY*\lengthEdge}) {};
	\r
	\draw[redEdge]
	(u) edge (c1)
	(c1) edge (v')
	(v) edge (y)
	(c3) edge (v*)
	;
	\draw[redDotted]
	(c1) edge (x2)
	(v) edge (x)
	(c1) edge (c2)
	;
	\draw[blueDottedUndir]
	(u) edge (tu)
	;
	\draw[blueDotted]
	(tu) edge (c2)
	(c3) edge[bend left=40] (v*)
	;
	\draw[blueArc]
	(c2) edge (c3)
	;
}
\def\defIntValidStates{\myTikZ{
		\nodeUWithChild{$u(\mathcal T_a)$}
		\nodeVPrime{}
		\nodeV{}
		\xTwoWithChild{blueArc}
		\xWithChild{$x_3$}{blueArc}
		\yWithChild{$x_1$}{blueArc}
		\vStarWithChild{blueArc}
		\xFourWithChild
		\xFiveWithChild
		\KPrime
		\KTilde
		\commonThings
		\draw[redEdge]
		(v') edge (v)
		(v*) edge (x4)
		(x4) edge (x5)
		;
		\draw[blueDotted]
		(tv*) edge (r)
		;
		\draw[blueDottedUndir]
		(v* c) edge (tv*)
		;
}}
\def\commonOperations{
    \nodeUWithChild{$u(\mathcal T_a)$}
    \L
    \LPrime
    \xFourTransp
    \commonThings
    \draw[redDotted]
        (v*) edge (x4Transp)
    ;
    \draw[blueDotted]
        (c1) edge (tu)
    ;
}
\def\commonCaseOne{
    \nodeVPrime{$v'$}
    \nodeV{$v$}
    \xWithChild{$x$}{blueArc}
    \commonOperations
}
\def\commonSubcaseOneOne{
    \vStarWithChild{blueArc}
    \commonCaseOne
    \draw[blueDotted]
        (v') edge[bend left=20] (tv*)
        (y c) edge[bend right=20] (tv*)
        (tv*) edge (r)
    ;
    \draw[blueDottedUndir]
        (v* c) edge (tv*)
    ;
}
\def\subcaseOneOneBefore{\myTikZ{
    \xTwoWithChild{blueArc}
    \yWithChild{$y$}{blueArc}
    \commonSubcaseOneOne
    \draw[redEdge]
        (v') edge (v)
    ;
    \draw[blueDotted]
        (v) edge[bend left=20] (x2)
        (x2 c) edge[bend right=20] (y)
    ;
}}
\def\subcaseOneOneAfter{\myTikZ{
    \xTwoWithChild{blueArcRev}
    \yWithChild{$y$}{redEdge}
    \commonSubcaseOneOne
    \draw[blueArc]
        (v) edge (v')
    ;
    \draw[blueDotted]
        (x2) edge[bend right=20] (v)
        (y) edge[bend left=20] (x2 c)
    ;
}}
\def\commonSubcaseOneTwo{	
    \yWithChild{$y$}{blueArc}
    \xTwoWithChild{blueArc}
    \commonCaseOne
    \draw[blueDotted]
        (v* c) edge[bend right=10] (y)
        (v) edge[bend left=40] (y)
        (y c) edge[bend right=20] (r)
    ;
}
\def\subCaseOneTwoBefore{\myTikZ{
    \vStarWithChild{blueArc}
    \commonSubcaseOneTwo
    \draw[redEdge]
        (v') edge (v)
    ;
    \draw[blueDotted]
        (v') edge (v*)
    ;
}}
\def\subCaseOneTwoAfter{\myTikZ{
    \vStarWithChild{redEdge}
    \commonSubcaseOneTwo
    \draw[blueArc]
        (v') edge (v)
    ;
    \draw[blueDotted]
        (v*) edge (v')
    ;
}}
\def\xyzXWithChild#1#2{	% label, colour of generating edge
    \nodeWithLabel{redDot}{#1}{below}{0} (x) at (0, 0) {};
    \child{x}{0}{0}{90}{2}{#2}
}
\def\xyzVOneX{4}
\def\xyzVOne#1{	% label
    \nodeWithLabel{redDot}{#1}{below}{0} (v1) at ({\xyzVOneX*\lengthEdge}, 0) {};
}
\def\xyzYWithChild#1#2{	% label, colour of generating edge
    \def\xyzYX{(\xyzVOneX + 1)}
    \nodeWithLabel{redDot}{#1}{right}{0} (y) at ({\xyzYX*\lengthEdge}, 0) {};
    \child{y}{\xyzYX}{0}{270}{1}{#2}
}
\def\xyzZX{1.5}
\def\xyzVTwoY{2}
\def\xyzVTwo#1{	% label
    \nodeWithLabel{redDot}{#1}{left}{0} (v2) at ({\xyzZX*\lengthEdge}, {-1*\xyzVTwoY*\lengthEdge}) {};
}
\def\xyzZWithChild#1{	% label
    \def\xyzZY{(\xyzVTwoY + 1)}
    \nodeWithLabel{redDot}{#1}{below}{0} (z) at ({\xyzZX*\lengthEdge}, {-1*\xyzZY*\lengthEdge}) {};
    \child{z}{\xyzZX}{-1*\xyzZY}{180}{0}{blueArc}
}
\def\xyzCommon{
    \node[transp] (t) at ({\xyzZX*\lengthEdge}, 0) {};
    \node[transp] (s) at ({(\xyzZX + 0.5)*\lengthEdge}, 0.5*\lengthEdge) {};
    \draw[blueDotted]
        (x c) edge[bend left=20] (y)
        (y c) edge[bend left=20] (z)
    ;
    \draw[redDotted]
        (x) edge (v1)
        (v1) edge (y)
        (t) edge (v2)
    ;
}
\def\xyzPicOne{\myTikZ{
    \xyzXWithChild{$v_* = u(\mathcal T^*_{v_*})$}{blueArc}
    \xyzVOne{$\bar v$}
    \xyzYWithChild{$x$}{blueArc}
    \xyzVTwo{}
    \xyzZWithChild{}
    \xyzCommon
    \draw[redEdge]
        (v1) edge (y)
        (v2) edge (z)
    ;
    \draw[blueDotted]
        (v1) edge[bend right=10] (s)
        (s) edge[bend right=10] (x)
        (v2) edge[bend right=20] (s)
    ;
}}
\def\xyzPicTwo{\myTikZ{
    \xyzXWithChild{$u(\mathcal T_{v_*})$}{redEdge}
    \xyzVOne{}
    \xyzYWithChild{$v_*$}{blueArc}
    \xyzVTwo{$\bar v$}
    \xyzZWithChild{$x$}
    \xyzCommon
    \draw[redEdge]
        (v2) edge (z)
    ;
    \draw[blueArc]
        (v1) edge (y)
    ;
    \draw[blueDotted]
        (s) edge[bend left=10] (v1)
        (x) edge[bend left=10] (s)
        (v2) edge[bend right=20] (s)
    ;
}}
\def\xyzPicThree{\myTikZ{
    \xyzXWithChild{$u(\mathcal T'_{v_*})$}{redEdge}
    \xyzVOne{}
    \xyzYWithChild{}{redEdge}
    \xyzVTwo{}
    \xyzZWithChild{$v_*$}
    \xyzCommon
    \draw[blueArc]
        (y) edge (v1)
        (v2) edge (z)
    ;
    \draw[blueDotted]
        (v1) edge[bend right=10] (s)
        (x) edge[bend left=10] (s)
        (s) edge[bend left=20] (v2)
    ;
}}
\def\specialPathsCommon{
    \def\dotOffset{0.35*\lengthEdge}
    \nodeWithLabel{blackDot}{$v_{-1}$}{below}{0} (v-1) at 
    (0, 0) {};
    \nodeWithLabel{blackDot}{$v_0$}{below}{0} (v0) at 
    (\lengthEdge, 0) {};
    \nodeWithLabel{transp}{\textcolor{lightBlue}{$T_{b_1}$}}{above}{2} (threeDots1) at 
    (2*\lengthEdge + \dotOffset, 0) {...};
    \node[blackDot] (a1') at (3*\lengthEdge + 2*\dotOffset, 0) {};
    \node[blackDot] (a1) at (4*\lengthEdge + 2*\dotOffset, 0) {};
    \nodeWithLabel{transp}{\textcolor{cyan}{$T_{b_2}$}}{above}{2} (threeDots2) at 
    (5*\lengthEdge + 3*\dotOffset, 0) {...};
    \node[blackDot] (a2') at (6*\lengthEdge + 4*\dotOffset, 0) {};
    \node[blackDot] (a2) at (7*\lengthEdge + 4*\dotOffset, 0) {};
    \nodeWithLabel{transp}{\textcolor{blue}{$T_{b_3}$}}{above}{2} (threeDots3) at (8*\lengthEdge + 5*\dotOffset, 0) {...};
    \nodeWithLabel{blackDot}{$v_{l-1}$}{below}{0} (a3') at (9*\lengthEdge + 6*\dotOffset, 0) {};
    \nodeWithLabel{blackDot}{$v_l$}{below}{0} (a3) at (10*\lengthEdge + 6*\dotOffset, 0) {};
    \draw[line width = 0.5mm, lightBlue, dotted]
    (a1) edge[bend right=50] (v-1);
    \draw[line width = 0.5mm, cyan, dotted]
    (a2) edge[bend right=50] (a1');
    \draw[line width = 0.5mm, blue, dotted]
    (a3) edge[bend right=50] (a2');
}
\def\specialPaths{\myTikZ{
    \specialPathsCommon
    \draw[redEdge]
        (v-1) edge (v0)
    ;
    \draw[line width = 0.5mm, lightBlue, ->, -stealth]
        (v0) edge (threeDots1)
        (threeDots1) edge (a1')
        (a1') edge (a1)
    ;
    \draw[blueArc]
        (a1) edge (threeDots2)
        (threeDots2) edge (a2')
        (a2') edge (a2)
    ;
    \draw[line width = 0.5mm, blue, ->, -stealth]
        (a2) edge (threeDots3)
        (threeDots3) edge (a3')
        (a3') edge (a3)
    ;
}}
\def\specialPathsAugm{\myTikZ{
    \specialPathsCommon
    \draw[redEdge]
        (a3') edge (a3)
    ;
    \draw[line width = 0.5mm, lightBlue, ->, -stealth]
        (v0) edge (v-1)
        (threeDots1) edge (v0) 
        (a1') edge(threeDots1)
    ;
    \draw[blueArc]
        (a1) edge (a1')
        (threeDots2) edge(a1)
        (a2') edge (threeDots2)
    ;
    \draw[line width = 0.5mm, blue, ->, -stealth]
        (a2) edge (a2')
        (threeDots3) edge (a2)
        (a3') edge (threeDots3)
    ;
}}
\def\angleSwapDef{-60}
\def\xOnABranch#1{#1*cos(\angleSwapDef)*\lengthEdge}
\def\yOnABranch#1{#1*sin(\angleSwapDef)*\lengthEdge}
\def\posOnABranch#1{({\xOnABranch{#1}}, {\yOnABranch{#1}})}
\def\xOnBBranch#1{\xOnABranch{0} + #1*cos(180-\angleSwapDef)*\lengthEdge}
\def\yOnBBranch#1{\yOnABranch{0} + #1*sin(180-\angleSwapDef)*\lengthEdge}
\def\posOnBBranch#1{({\xOnBBranch{#1}}, {\yOnBBranch{#1}})}
\def\xOnCBranch#1{\xOnBBranch{2} + #1*cos(\angleSwapDef)*\lengthEdge}
\def\yOnCBranch#1{\yOnBBranch{2} + #1*sin(\angleSwapDef)*\lengthEdge}
\def\posOnCBranch#1{({\xOnCBranch{#1}}, {\yOnCBranch{#1}})}
\def\swapDefBasic{
    \nodeWithLabel{blackDot}{$r$}{above}{0} (r) at (0, 0) {};
    \node[blackDot] (a1) at \posOnABranch{1} {};
    \nodeWithLabel{blackDot}{$v'_1$}{left}{-2} (v'1) at \posOnABranch{2} {};
    \nodeWithLabel{blackDot}{$u'_1$}{left}{-2} (u'1) at \posOnABranch{3} {};
    \nodeWithLabel{blackDot}{$u_1$}{left}{-2} (u1) at \posOnABranch{4} {};
    \node[blackDot] (a2) at \posOnABranch{5} {};
    \nodeWithLabel{blackDot}{$v_1$}{left}{-2} (v1) at \posOnABranch{6} {};
    \node[blackDot] (b1) at \posOnBBranch{1} {};
    \nodeWithLabel{blackDot}{$u'_2$}{left}{-1} (u'2) at \posOnBBranch{2} {};
    \nodeWithLabel{blackDot}{$u_2$}{left}{-1} (u2) at \posOnBBranch{3} {};
    \nodeWithLabel{blackDot}{$v_2$}{left}{-1} (v2) at \posOnBBranch{4} {};
    \node[blackDot] (b2) at \posOnBBranch{5} {};
    \nodeWithLabel{blackDot}{$v'_2$}{right}{-1} (v'2) at \posOnCBranch{1} {};
    \node[blackDot] (c1) at \posOnCBranch{2} {};
    \draw[blueArc]
        (a1) edge (r)
        (v'1) edge (a1)
        (u'1) edge (v'1)
        (b1) edge (r)
        (u'2) edge (b1)
        (b2) edge (v2)
        (v'2) edge (u'2)
        (c1) edge (v'2)
    ;
}
\def\swapDefBefore{\myTikZ{
    \swapDefBasic
    \draw[blueArc]
        (u1) edge (u'1)
        (u2) edge (u'2)
        (a2) edge (u1)
        (v1) edge (a2)
        (v2) edge (u2)
    ;
    \draw[redEdge]
        (v1) edge[bend right=40] (v'1)
        (v2) edge (v'2)
    ;
}}
\def\swapDefAfter{\myTikZ{
    \swapDefBasic
    \draw[blueArc]
        (v1) edge[bend right=40] (v'1)
        (v2) edge (v'2)
        (u1) edge (a2)
        (a2) edge (v1)
        (u2) edge (v2)
    ;
    \draw[redEdge]
        (u1) edge (u'1)
        (u2) edge (u'2)
    ;
}}
\author{Sebastian Mies\thanks{Institute of Computer Science, Johannes Gutenberg University Mainz, email: smies@students.uni-mainz.de}   \, and Benjamin Moore\thanks{Institute of Science and Technology Austria, email: Benjamin.Moore@ist.ac.at. Benjamin Moore is supported by ERC Starting Grant “RANDSTRUCT” No. 101076777 and appreciates the gracious support.}}
\title{An Approximate Version of the Strong Nine Dragon Tree Conjecture}
\DeclarePairedDelimiter\bigFloor{\big\lfloor}{\big\rfloor}
\DeclarePairedDelimiter\bigCeil{\big\lceil}{\big\rceil}
\DeclarePairedDelimiter\biggCeil{\bigg\lceil}{\bigg\rceil}
\DeclarePairedDelimiter\bigParan{\big(}{\big)}
\DeclarePairedDelimiter\BigParan{\Big(}{\Big)}
\newcommand\fracArb{\gamma}
\newcommand\explSG{H_{\mathcal T^*}}
\newcommand\density{\frac{d}{d+k+1}}
\def\inOneToK{\in \{1, \ldots, k\}}
\def\inZeroToEll{\in \{0, \ldots, \ell\}}
\declaretheorem{theorem} 
\declaretheoremstyle[
spaceabove=-2em,
spacebelow=6pt,
headfont=\normalfont\itshape,
postheadspace=1em,
qed=\qedsymbol
]{proofStyle}
\declaretheoremstyle[
spaceabove=1em,
spacebelow=6pt,
headfont=\normalfont\itshape,
postheadspace=1em,
qed=\qedsymbol
]{proofOfMainTheoremStyle}
\declaretheoremstyle[
spaceabove=-2em,
spacebelow=6pt,
headfont=\normalfont\itshape,
postheadspace=1em,
qed=\hfill \textit{\color{gray}(End of proof of the claim) }$\;\blacksquare$
]{proofInProofStyle}
\declaretheorem[name={Proof},style=proofInProofStyle,unnumbered,
]{proofInProof}
\declaretheorem[name={\textbf{Proof of Theorem \ref{thm:approxSNDTC}}},style=proofOfMainTheoremStyle,unnumbered,
]{proofOfMainTheorem}
\newtheorem{thm}{Theorem}[section] % reset theorem numbering for each chapter
\newtheorem{lemma}[thm]{Lemma}
\newtheorem{conj}[thm]{Conjecture}
\newtheorem{definition}[thm]{Definition}
\newtheorem{obs}[thm]{Observation}
\newtheorem{corollary}[thm]{Corollary}
\newtheorem*{claimUnnumbered}{Claim}
\newtheorem{notation}[thm]{Notation}
\date{}
\def\maxOfDAndEOfK{\max \{d', e(K)\}}
\def\edgeLimit{\maxOfDAndEOfK - \ell' - 1}
\def\X{X_{\ell'}(K)}
\def\generatingEdgesInRed{\bigcup_{x \in \X} \{xx'\}}
\def\mathcalC{\mathcal C_{\ell'}(K)}
\def\setOfValidDecomps{\mathcal F_{K, \ell'}}
\def\I{\mathcal I_{\ell'}(K)}
\def\iSigStar{i_{\sigma^*}}
\def\ISmallerISigK{\I < \iSigStar(K)}
\def\IEqISigK{\I = \iSigStar(K)}
\def\valueOfEllInText{\bigFloor{\frac{d-1}{k+1}}}
\def\valueOfEllPlusOneInText{\bigCeil{\frac{d}{k+1}}}
\def\dPrimeWithoutUsingEll{d + \bigCeil{k \valueOfEllInText \big(\frac{d}{k+1} - \frac{1}{2}\valueOfEllPlusOneInText \big)}}
\def\dPrimeUsingEll{d + \bigCeil{k \ell \big(\frac{d}{k+1} - \frac{1}{2}(\ell + 1) \big)}}
\begin{document}
\maketitle

\begin{abstract}
    We prove the Strong Nine Dragon Tree Conjecture is true if we replace the edge bound with  $\dPrimeWithoutUsingEll \leq d + \frac{k}{2} \cdot \big(\frac{d}{k+1}\big)^2$.
    More precisely:
    let $G$ be a graph, let $d$ and $k$ be positive integers and $\fracArb(G) = \max_{H \subseteq G, v(H) \geq 2} \frac{e(H)}{v(H) - 1}$. If $\gamma(G) \leq k + \frac{d}{d + k + 1}$, then there is a partition of $E(G)$ into $k + 1$ forests, where in one forest every connected component has at most $\dPrimeWithoutUsingEll$ edges.
\end{abstract}
	
\section{Introduction}
In this paper all graphs may contain parallel edges but are not allowed to contain loops.
This is a paper on graph decompositions. Throughout, $E(G)$ and $V(G)$ denote the edge set and vertex set of a graph $G$, and we use the notation $e(G) = |E(G)|$ and $v(G) = |V(G)|$. A \textit{decomposition} of a graph $G$ is a partition of $E(G)$ into subgraphs. Naturally, we are interested in cases where we can partition $E(G)$ into few subgraphs, each of which are simple. An obvious candidate for a ``simple" graph is a forest. Nash-Williams characterized exactly when a graph decomposes into $k$ forests. To state his theorem, we need a definition.

\begin{definition}
    Given a graph $G$, we let the \textit{fractional arboricity} of a graph, denoted $\fracArb(G)$, be:
    $\fracArb(G) = \max_{H \subseteq G, v(H) \geq 2} \frac{e(H)}{v(H) - 1}$.
\end{definition}

\begin{theorem}[Nash-Williams' Theorem \cite{nash}]
    A graph $G$ decomposes into $k$ forests if and only if $\fracArb(G) \leq k$. 
\end{theorem}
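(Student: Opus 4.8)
The plan is to handle the two directions of the equivalence separately: the forward implication is a short counting argument, and the reverse implication I would obtain from the matroid union theorem, which carries essentially all of the content.

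For the forward direction, suppose $E(G)$ partitions into forests $F_1,\dots,F_k$, and let $H\subseteq G$ with $v(H)\ge 2$. For each $i$ the edge set $E(F_i)\cap E(H)$ is acyclic as a subgraph on vertex set $V(H)$, so it has at most $v(H)-1$ edges; summing over $i$ gives $e(H)=\sum_{i=1}^k|E(F_i)\cap E(H)|\le k(v(H)-1)$. Dividing by $v(H)-1$ and maximizing over $H$ yields $\fracArb(G)\le k$.

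For the reverse direction, let $M(G)$ denote the graphic matroid of $G$: its ground set is $E(G)$, its independent sets are the forests of $G$, and its rank function is $r(F)=v(G)-c(V(G),F)$, where $c(V(G),F)$ is the number of connected components (isolated vertices included) of the spanning subgraph $(V(G),F)$. A partition of $E(G)$ into $k$ forests exists exactly when $E(G)$ is independent in the union $N$ of $k$ copies of $M(G)$. By the matroid union theorem of Nash-Williams and Edmonds, $N$ is a matroid with rank function $\rho(A)=\min_{F\subseteq A}\big(|A\setminus F|+k\,r(F)\big)$; hence $E(G)$ is independent in $N$ if and only if $\rho(E(G))=e(G)$, equivalently if and only if $|F|\le k\,r(F)$ for every $F\subseteq E(G)$. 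To see that this condition coincides with $\fracArb(G)\le k$, fix $F\subseteq E(G)$ and let $C_1,\dots,C_m$ be the vertex sets of the components of $(V(G),F)$, so that $r(F)=\sum_{i=1}^m(|C_i|-1)$ and, since each edge of $F$ lies inside some $C_i$, also $|F|=\sum_{i=1}^m|F\cap E(G[C_i])|$. If $\fracArb(G)\le k$, then each $C_i$ with $|C_i|\ge 2$ satisfies $|F\cap E(G[C_i])|\le e(G[C_i])\le k(|C_i|-1)$ while singletons contribute $0$ to both sides, so summing gives $|F|\le k\,r(F)$; conversely, taking $F=E(H)$ for $H\subseteq G$ with $v(H)\ge 2$ gives $e(H)\le k\,r(E(H))\le k(v(H)-1)$. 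This establishes the equivalence, and with it the theorem.

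The main obstacle is the matroid union theorem itself; with it in hand, everything above is routine bookkeeping. A self-contained proof would instead argue directly, specializing the standard augmenting-path proof of matroid union: take pairwise disjoint forests $F_1,\dots,F_k$ whose union $E'$ has maximum size, and suppose some $uv\in E(G)\setminus E'$, so that $F_i+uv$ contains a $u$--$v$ path for each $i$ (else $uv$ could be added to $F_i$, contradicting maximality of $E'$). Build the auxiliary digraph whose arcs record feasible single-edge moves of an edge from one forest to another; then either a sequence of such moves frees some forest to receive $uv$, contradicting maximality of $E'$, or the set $S$ of vertices reachable from $\{u,v\}$ along such move-sequences induces a subgraph $H=G[S]$ with $e(H)>k(v(H)-1)$, contradicting $\fracArb(G)\le k$. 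I would present the matroid-union version for brevity and note this argument as the direct alternative.
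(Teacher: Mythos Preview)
The paper does not prove this statement at all: Nash--Williams' theorem is quoted as a classical background result with a citation, and the paper immediately moves on. There is therefore no ``paper's own proof'' to compare against.

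That said, your argument is correct and is one of the standard routes. The forward direction is the obvious counting, and for the reverse direction you invoke the matroid union theorem and then translate the rank condition $|F|\le k\,r(F)$ for all $F\subseteq E(G)$ into the fractional arboricity bound via the component decomposition of $(V(G),F)$; this translation is handled cleanly. Your sketched direct alternative (the augmenting-path argument specialized to graphic matroids) is also a legitimate self-contained proof, though as written it is only an outline. Since the paper treats Nash--Williams as a black box, either version would be appropriate if you wanted to include a proof, but none is expected here.
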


This is an extremely pretty theorem, however it fails to capture some of the information obtained in the parameter $\fracArb$. In particular, $\fracArb$ may be non-integral, and if for example, $k \geq  1$, and $\fracArb(G) = k + \varepsilon$ for $\varepsilon >0$ but small, one might imagine strengthening the decomposition, because intuitively, one only barely needs $k+1$ forests, essentially $k$ forests suffice. There are many possible ways one might try - but for this paper we will try to gain structure on one single forest. We are not the first to do this; at this point in time, there is a large body of literature on theorems of this type. The most relevant for this paper is the Nine Dragon Tree Theorem, which was proven by Jiang and Yang \cite{ndtt}, after a large amount of effort by other authors (see for example, \cite{Kostochkaetal,ndtk2,Yangmatching, sndtck1d2}).

\begin{theorem}[Nine Dragon Tree Theorem \cite{ndtt}]
    Let $G$ be a graph, and let $d,k$ be positive integers. Every graph with $\fracArb(G) \leq k + \density$ decomposes into $k+1$ forests such that one of them has maximum degree $d$.
\end{theorem}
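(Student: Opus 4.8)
The plan is to argue by contradiction, combining a minimal counterexample with an augmenting-subgraph argument, in the spirit of Nash-Williams' proof and the earlier partial results on the Nine Dragon Tree.

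\emph{Setup.} Suppose the theorem fails and let $G$ be a counterexample minimizing $e(G)$. Since $\fracArb(G) \leq k + \density < k+1$, the Nash-Williams Theorem gives a decomposition of $G$ into $k+1$ forests. Call a decomposition $(F_1,\dots,F_{k+1})$ together with a choice of root in each component of $F_{k+1}$ a \emph{configuration}; write $F = F_{k+1}$ for the designated forest and orient each edge of $F$ from child to parent. A non-root vertex of $F$ of degree exceeding $d$ (equivalently, with at least $d$ children) is \emph{overloaded}; the theorem is exactly the assertion that some configuration has $F$ with no overloaded vertex and no root of degree exceeding $d$. I would introduce a potential function $\Phi$ on configurations — roughly a lexicographically ordered vector recording, in order of increasing distance from the roots of $F$, how much excess degree is present at vertices of $F$ — and fix a configuration minimizing $\Phi$. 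The purpose of $\Phi$ is that every edge-exchange and re-rooting move used below strictly decreases it, so none of them can actually be carried out.

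\emph{The augmenting structure.} Assume $F$ still has an overloaded vertex or a root of degree exceeding $d$, call it $r$; I derive a contradiction by growing a subgraph $H \subseteq G$ that is too dense. Initialize $H$ as the component of $F$ containing $r$, and maintain a set of \emph{active} edges: edges of $F$ incident to vertices already reached by $H$ that we would like to move out of $F$ into some $F_i$ with $i \leq k$. For an active edge $e = uv$: if $e$ can be added to some such $F_i$ without creating a cycle, perform the exchange, which by $\Phi$-minimality either directly contradicts the choice of configuration or forces a finite cascade of further exchanges/re-rootings that does; hence in fact $e$ is immovable, meaning that for every $i \leq k$ the endpoints $u,v$ already lie in a common component of $F_i$, joined by a unique path $P_i^e$. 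Add all $k$ of these paths to $H$, then add to $H$ the $F$-components of all newly reached vertices (each rooted at its entry vertex), mark the corresponding excess edges at overloaded/entry vertices as active, and iterate. Since $G$ is finite the process terminates.

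\emph{The density count.} When the process halts, $H$ is a disjoint union of $t$ full components $T_1,\dots,T_t$ of $F$, contributing $\sum_{j}(v(T_j)-1)$ edges, together with the path-edges imported from $F_1,\dots,F_k$. The crux is an amortized count: each component $T_j$ is credited its $\approx \density$ share of the target ratio, while the active excess at an overloaded vertex — which has at least $d$ children, each demanding a full complement of $k$ immovable edges and hence $k$ imported paths before the boundary of $H$ closes up — produces a strictly positive surplus that the credits cannot absorb. Tracking this carefully (and avoiding double-counting of shared path-edges) yields
\[
e(H) > \Big(k + \density\Big)\big(v(H)-1\big),
\]
so $\fracArb(G) > k + \density$, contradicting the hypothesis. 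Therefore in the $\Phi$-minimal configuration $F$ has maximum degree at most $d$, which contradicts $G$ being a counterexample and proves the theorem.

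\emph{Main obstacle.} The difficulty is twofold and intertwined: (i) designing $\Phi$ so that \emph{every} exchange and re-rooting move that the augmenting process can attempt is strictly $\Phi$-decreasing — this is what lets us conclude that active edges are genuinely immovable and that any overload newly created by a move lies strictly closer to a root and is therefore harmless — and (ii) making the amortized density count survive with the \emph{tight} constant $\density$, since any slack in how absorbed components are credited against imported paths, or any mishandling of paths shared between several active edges, destroys the strict inequality. Getting (ii) exactly right is precisely why the degree bound in the statement is $d$ and not larger, and it is the step that demands the most delicate bookkeeping.
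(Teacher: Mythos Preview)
The paper does not contain a proof of this statement. The Nine Dragon Tree Theorem is quoted as a known result of Jiang and Yang \cite{ndtt} and serves only as background and motivation for the paper's own theorem (Theorem~\ref{thm:approxSNDTC}); nowhere in the paper is it reproved. So there is no ``paper's own proof'' against which to compare your attempt.

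As for the proposal itself: what you have written is not a proof but an outline of where a proof might live, and you are candid about this in your final paragraph. The architecture you sketch --- minimal counterexample, a carefully chosen potential on decompositions, an exploration subgraph grown by chasing immovable edges through the other $k$ forests, and a terminal density count --- is indeed the shape of the Jiang--Yang argument, and also the shape of the machinery the present paper adapts for its own (different) theorem. But the two items you flag as the ``main obstacle'' are exactly the content of the proof: specifying $\Phi$ so that every exchange is strictly decreasing, and executing the density count with no slack at the tight constant $\tfrac{d}{d+k+1}$. Your write-up asserts that these can be done without doing them. In particular, the sentence ``Tracking this carefully \ldots\ yields $e(H) > (k+\tfrac{d}{d+k+1})(v(H)-1)$'' is precisely the step that took the community several papers to get right, and it cannot be waved through. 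Until $\Phi$ is written down and the amortization is carried out line by line, this remains a plan rather than a proof.
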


It was shown in \cite{sndtck1d2} that the Nine Dragon Tree Theorem is sharp, in the following sense:

\begin{theorem}[\cite{sndtck1d2}]
    For any positive integers $k$ and $d$ there are arbitrarily large graphs $G$ and a set $S \subseteq E(G)$ of $d+1$ edges such that $\fracArb(G-S) = k + \density$ and $G$ does not decompose into $k+1$ forests where one of the forests has maximum degree $d$. 
\end{theorem}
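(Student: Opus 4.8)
The plan is to exhibit an explicit infinite family for each pair $k,d$ (working with multigraphs, which is the natural setting for fractional arboricity). I would take $G-S$ to be a large ``tight'' host with $\fracArb(G-S)=k+\density$ built by gluing copies of a small \emph{base gadget} $T$: here $T$ is a graph with $v(T)-1=(d+k+1)t$ and $e(T)=(k+\density)(v(T)-1)$ all of whose proper subgraphs have strictly smaller ratio, so that $\fracArb(T)=k+\density$ is witnessed only by $T$ itself. Such a $T$ exists --- place $k$ edge-disjoint spanning trees together with $dt$ further edges so that no subgraph becomes denser, allowing an edge of multiplicity at most $k$ where needed (harmless, since an edge of multiplicity $\mu\le k$ forms a subgraph of ratio $\mu\le k<k+\density$). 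Gluing many copies of $T$ in a tree pattern along single vertices leaves the fractional arboricity equal to $k+\density$: every subgraph splits at the cut vertices into pieces lying in single copies, and a short averaging argument shows no such assembly exceeds the ratio $k+\density$. This host can be made arbitrarily large.

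Next I would spoil one designated copy $T^{*}$: fix two of its vertices $x,y$, let $S$ be a set of $d+1$ edges joining $x$ and $y$, and arrange that $G=(G-S)+S$ has at least $k+2$ parallel edges between $x$ and $y$. For $d\ge k+1$ the $d+1$ edges of $S$ already suffice; for $d\le k$ I would take $T^{*}$ to carry $k+1-d$ parallel $xy$-edges from the outset, which does not raise its fractional arboricity since a multiplicity $k+1-d\le k$ has ratio below $k+\density$. Because a forest contains at most one edge of a family of parallel edges, $G$ admits no decomposition into $k+1$ forests at all, and in particular none into $k+1$ forests one of which has maximum degree $d$; meanwhile $\fracArb(G-S)=k+\density$ and $|S|=d+1$ by construction. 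For $k=d=1$ this collapses to the textbook example: $T^{*}=C_{4}$ and $S$ the two diagonals, so the spoiled copy becomes $K_{4}$, which has no forest-plus-matching decomposition since $6>3+2$.

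The routine part of the argument is the verification that $\fracArb(G-S)=k+\density$, i.e.\ the averaging argument above combined with the minimality of $T$. The step I expect to be the real obstacle is engineering $T^{*}$ so that adding only $d+1$ edges already destroys decomposability while $T^{*}$ itself does not exceed fractional arboricity $k+\density$; these two requirements pull against one another, and in the multigraph setting the parallel-edge device reconciles them cheaply. If one insists on simple graphs, then for $d$ small relative to $k$ no bounded-size subgraph can certify non-decomposability by a counting argument (a forest of maximum degree $d\ge 2$ can still be spanning, and for $d=1$ the arithmetic already fails once $k\ge 2$), so one must instead force the bounded-degree forest to accumulate more than $d$ edges at a single vertex. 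That forces $T^{*}$ to be replaced by a recursively layered gadget, each layer pushing a controlled amount of unavoidable ``excess'' of the last forest toward a root vertex; pinning the fractional arboricity at $k+\density$ through all the layers is the delicate calculation and would dominate the proof.
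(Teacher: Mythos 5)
This statement is quoted from \cite{sndtck1d2} and not proved in the present paper, so your construction has to stand on its own, and it has a genuine gap. Your general argument lives in the multigraph setting and establishes the conclusion in a degenerate way: by arranging $k+2$ parallel edges between $x$ and $y$ you create a sub(multi)graph of density $k+2$, so $G$ fails to decompose into $k+1$ forests for the trivial reason that its fractional arboricity exceeds $k+1$. The theorem, in the setting of the source it is cited from (and of this paper), concerns simple graphs, and there the content is exactly the part you defer: the sharpness examples \emph{do} decompose into $k+1$ forests, and one must show that in every such decomposition the distinguished forest is forced to have a vertex of degree larger than $d$. Your parallel-edge device is unavailable for simple graphs --- for $d\le k$ your gadget is not even simple before $S$ is added, and for every $d$ the set $S$ itself consists of $d+1\ge 2$ parallel copies of $xy$ --- and you yourself concede that the simple-graph case would require a ``recursively layered gadget'' whose density bookkeeping ``would dominate the proof.'' That deferred analysis is essentially the entire theorem, so the proposal does not establish the statement.

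A smaller point that highlights the mismatch: your own $k=d=1$ illustration is not an instance of your general scheme (which for $k=d=1$ would demand three parallel edges). There $G=K_4$ decomposes into two forests perfectly well; what fails is only the maximum-degree-one requirement, which is precisely the nontrivial phenomenon the theorem is about. The host part of your plan --- gluing copies of a tight gadget at cut vertices and controlling $\fracArb$ by a mediant/averaging argument --- is fine and is indeed the routine part; the missing idea is a simple-graph gadget in which adding the $d+1$ edges of $S$ forces every candidate forest of maximum degree $d$ to exceed degree $d$ somewhere, while keeping $\fracArb(G-S)$ exactly $k+\frac{d}{d+k+1}$.
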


Despite this, the authors of \cite{sndtck1d2} conjectured a massive strengthening:

\begin{conj}[Strong Nine Dragon Tree Conjecture, \cite{sndtck1d2}]
    Let $G$ be a graph and let $d$ and $k$ be positive integers. If $\fracArb(G) \leq k + \frac{d}{d + k + 1}$, then there is a partition into $k + 1$ forests, where in one forest every connected component has at most $d$ edges.
\end{conj}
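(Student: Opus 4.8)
The plan is to run the minimal-counterexample-plus-potential argument behind the Nine Dragon Tree Theorem, with a potential tailored to component sizes rather than to degrees. Suppose the statement fails and let $G$ be a counterexample with $v(G)$, and subject to that $e(G)$, as small as possible. Since $\fracArb(G) \leq k + \density < k+1$, Nash-Williams' Theorem gives a decomposition of $G$ into $k+1$ forests --- one could even start from the decomposition furnished by the Nine Dragon Tree Theorem, in which the distinguished forest already has maximum degree $d$, although that alone does not bound component sizes --- so the family $\mathcal{D}$ of decompositions $F_1, \dots, F_k, F_{k+1}$ with $F_{k+1}$ distinguished is nonempty. Root every component of $F_{k+1}$ and let $\Phi$ be the potential on $\mathcal{D}$ whose primary part records, for each integer $j \geq d$, the number of components of $F_{k+1}$ with more than $j$ edges (these sequences compared lexicographically), with a tie-break preferring $F_{k+1}$ to carry as many edges as possible and the ``excess'' edges to lie as close to their roots as possible. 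Fix $D \in \mathcal{D}$ minimizing $\Phi$; if $\Phi(D) = 0$ we are done, so some component $C_0$ of $F_{k+1}$ has at least $d+1$ edges.

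The second step is an augmenting exchange. Choose a suitable edge $e_0$ of $C_0$, for instance a leaf edge witnessing the excess; moving $e_0$ from $F_{k+1}$ into some $F_i$ either already produces a decomposition with smaller $\Phi$, or it closes a unique cycle in $F_i + e_0$, in which case deleting a suitable edge $e_1$ of that cycle and returning it to $F_{k+1}$ gives another element of $\mathcal{D}$ from which one may continue. Iterating builds, in a breadth-first manner, a rooted tree whose nodes are reachable decompositions and whose branches are legal chains of swaps. The hypothesis $\fracArb(G) \leq k + \density$ then enters as a density budget on the subgraph $H$ of $G$ spanned by the edges involved: if no branch ever splits an oversized component of $F_{k+1}$ or pushes its excess toward a root --- that is, if the exploration only ever reshuffles edges among oversized components --- then a count on $H$ contradicts $\fracArb(G) \leq k + \density$. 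Hence some branch strictly decreases $\Phi$, contradicting the minimality of $\Phi(D)$. The minimality of $G$ is used beforehand to reduce to a configuration with no peelable proper subgraph (e.g.\ every vertex meets $F_{k+1}$), so that the density count genuinely applies to $G$ itself.

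The crux --- and the reason the sharp bound $d$ is still only the conjecture while the bound proved here is the weaker $\numEdgesInThmInText$ --- is the case in which the exchange exploration never leaves a cluster of components of $F_{k+1}$ that are \emph{all} only barely oversized, of size between $d+1$ and roughly the approximate threshold $\numEdgesInThmInText$, where $\valueOfEllInText$ counts the relevant layers of surplus structure. Within such a cluster an exchange can do nothing but transfer an edge from one threshold-sized component to another, so $\Phi$ need not drop; worse, the budget from $\fracArb(G) \leq k + \density$ is \emph{exactly} consumed by the edges that must be parked in the cluster, leaving no surplus to force a split. Converting that exact tightness into a genuine split rather than a mere reshuffle, uniformly in $d$ and $k$, is the real obstacle, and it is what a complete proof must supply. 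Raising the threshold from $d$ to $\numEdgesInThmInText$ is precisely what buys the slack: with that much headroom per component one can safely enlarge not-yet-bad components during the exchanges, and a convexity/discharging argument on the exchange tree then closes the gap without the delicate split. (When $d \leq k+1$ this threshold already equals $d$, so in that range the statement proved here coincides with the conjecture.)
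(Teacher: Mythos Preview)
The statement you are attempting is the Strong Nine Dragon Tree \emph{Conjecture}, which the paper does \emph{not} prove; the paper proves only the approximate version with the relaxed bound $d' = \numEdgesInThmInText$ (Theorem~\ref{thm:approxSNDTC}). So there is no proof in the paper to compare your proposal against, and your proposal is not a proof either---you say so yourself when you identify the ``crux'' and explain that converting the exact tightness into a genuine split ``is what a complete proof must supply.'' That diagnosis is honest and essentially correct, but it means your write-up is a discussion of the obstruction, not a proof of the conjecture.

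It is still worth contrasting your sketch with the paper's machinery for the \emph{approximate} theorem, since several of your choices diverge from what actually makes that argument go through. First, the paper does not work with an arbitrary $(k{+}1)$-forest decomposition: it uses that a vertex-minimal counterexample decomposes into $k$ \emph{spanning trees} $T_1,\dots,T_k$ plus a forest $F$, and roots each $T_b$ at a carefully chosen vertex $r$ of the bad component $R^*$. This orientation is what makes the exploration subgraph, legal orders, and the special-path lemma work; your generic ``swap an edge into $F_i$, pop an edge off the cycle'' picture does not by itself give the directional control needed. Second, the paper's potential is the residue tuple $\rho$ (component counts at each size above $d'$) refined by a \emph{legal order} on the red components of the exploration subgraph---not a tie-break that prefers $F_{k+1}$ to carry many edges, which would in fact push in the wrong direction. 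Third, and most importantly, the paper's advance over prior work is a concrete combinatorial notion (``valid decomposition for $K$ and $\ell'$'', conditions 1)--8) in Section~\ref{sec:boundNumSmallChildren}) that lets one show every non-small component has at most $\ell'-\alpha+1$ small children with $\le \ell'$ edges in each colour (Lemma~\ref{lemma:numSmallChildren}). That bound is exactly what your sketch is missing: it is the mechanism that turns the density budget into a contradiction at the $d'$ threshold, and no analogue is known at the $d$ threshold. Your convexity/discharging hand-wave does not substitute for it.
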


The conjecture is known to be true when $d \leq k+1$ \cite{sndtcDLeqKPlusOne}, and  recently it was shown to be true when $d \leq 2(k+1)$ \cite{mies2024strong}. All other cases are open. Before this paper, the best evidence towards the conjecture was that an analogous conjecture for pseudoforests (i.e.\ graphs which have at most one cycle in each connected component) was shown to be true (and in fact much more) in a series of work \cite{ndttPsfs,mies2023pseudoforest,sndtcPsfs}. There is also a digraph version \cite{digraphndt} and a matroidal version \cite{matroidndt} of the Nine Dragon Tree Conjecture. In an effort to keep the introduction short, we refer the reader to \cite{mies2024strong} for a very thorough exposition of the history of the Strong Nine Dragon Tree Conjecture and its variants, as well as applications of the theorem.

Let $f(k,d)$ be a function such that every graph with fractional arboricity at most $k + \density$ decomposes into $k+1$ forests such that one of the forests has every connected component containing at most $f(k,d)$ edges. The Strong Nine Dragon Tree Conjecture says that we can take $f(k,d) = d$. Until now, it was not known if $f(k,d)$ exists for all $k,d$. We show that $f(k,d)$ exists. 

\begin{theorem} \label{thm:approxSNDTC}
    Let $G$ be a graph and let $d$ and $k$ be positive integers. If $\fracArb(G) \leq k + \frac{d}{d + k + 1}$, then there is a partition into $k + 1$ forests, where in one forest every connected component has at most $\dPrimeWithoutUsingEll$ edges.
\end{theorem}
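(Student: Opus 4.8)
The plan is to run the standard discharging/augmenting-forest machinery that powers all known cases of the Nine Dragon Tree circle of results, but to extract a weaker component bound that survives for \emph{all} $d$ and $k$. First I would fix a graph $G$ with $\fracArb(G) \le k + \density$ and, using the Nash--Williams Theorem, start from \emph{some} decomposition of $G$ into $k+1$ forests $F_1, \dots, F_k, F_{k+1}$; we single out $F_{k+1}$ as the forest whose components we want to control. Among all such decompositions I would select one that is optimal with respect to a carefully chosen potential function — typically a lexicographic objective that first minimizes (a function of) the sizes of the ``large'' components of $F_{k+1}$ (those with more than the target number of edges), and then minimizes some secondary quantity measuring how the forests $F_1, \dots, F_k$ attach to those components. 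Root each component of $F_{k+1}$ and orient its edges toward the root, so that ``descendant'' and ``the edge above a vertex'' make sense.

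Next I would argue by contradiction: suppose some component $K$ of $F_{k+1}$ has more than $\numEdgesInThmInText$ edges. The core of the argument is the usual exchange lemma: for an edge $e \in F_i$ ($i \le k$) with both endpoints in $V(K)$ (or more generally joining two vertices at controlled positions in the rooted structure), moving $e$ into $F_{k+1}$ and moving the ``upward'' edge of the cycle this creates out of $F_{k+1}$ into $F_i$ yields another valid decomposition; optimality of the potential then forces strong restrictions on where such edges $e$ can occur. Iterating this — the ``$\ell$ levels'' phenomenon visible in the macros \verb|\valueOfEllInText| $= \valueOfEllInText$ — one builds, starting from $K$, a nested family of vertex sets $X_0 \supseteq X_1 \supseteq \cdots$ (the descendants at bounded depth, those reachable by legal exchanges, etc.) together with the forest edges forced to live on them. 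Summing up: the subgraph $H$ spanned by $K$ together with all these forced edges of $F_1, \dots, F_k$ has too many edges relative to its vertices, i.e. $e(H) > (k + \density)(v(H)-1)$, contradicting $\fracArb(G) \le k + \density$. The precise choice of the target value $\numEdgesInThmInText$ is exactly what makes this counting go through: with $\ell+1 = \valueOfEllPlusOneInText$ levels, the $j$-th level contributes roughly an extra $k\cdot j$ edges, and $\sum_{j=1}^{\ell} k j = \frac{k}{2}(\ell+1)\ell$ is the source of the quadratic correction term.

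I would organize the write-up as: (1) notation for rooted forests, the potential function, and ``legal exchanges''; (2) a local exchange lemma describing, for an optimal decomposition, which $F_i$-edges can appear between which vertices of $K$; (3) a propagation lemma building the nested sets $X_0 \supseteq \cdots \supseteq X_\ell$ and a lower bound on the number of $F_i$-edges confined to them, proved by induction on the level; (4) the final density count that contradicts $\fracArb(G) \le k + \density$ once $e(K) > \numEdgesInThmInText$.

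The main obstacle — and where essentially all the real work lives — is step (3): controlling how the ``badness'' propagates down the levels. In the exact Strong Nine Dragon Tree Conjecture one needs this propagation to be extremely tight (losing nothing, which is why it is only known for $d \le 2(k+1)$); here the point is that we may be wasteful by a bounded amount at each of the $\ell \approx d/(k+1)$ levels, and the quadratic slack $\frac{k}{2}(\ell+1)\ell$ in the statement is precisely the accumulated waste we can afford. Making the bookkeeping of the sets $X_{\ell'}(K)$, their ``generating edges'' $\generatingEdgesInRed$, and the associated families of valid sub-decompositions $\setOfValidDecomps$ consistent across all levels, and verifying that the potential genuinely decreases at each exchange, is the delicate part; the final arithmetic comparing $e(H)$ to $(k+\density)(v(H)-1)$ is then routine.
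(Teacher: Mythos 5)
You have correctly reconstructed the general framework used here (an optimal decomposition chosen by a lexicographic potential, exchange operations between the distinguished forest and the other forests, special-path augmentations, and a final density count in which the quadratic term $\frac{k}{2}\ell(\ell+1)$ with $\ell=\lfloor (d-1)/(k+1)\rfloor$ appears as accumulated slack over $\ell$ levels). But what you have written is a plan, not a proof, and the gap is exactly the step you yourself flag as where ``essentially all the real work lives'': you never give the mechanism that bounds how many small components can attach to a given component of the distinguished forest, i.e.\ the propagation/bookkeeping of your step (3). In the paper this is precisely the new contribution: after reducing (via a vertex-minimal counterexample) to $k$ spanning trees rooted at a vertex $r$ plus a forest, and after setting up the exploration subgraph, legal orders and the special-path lemma, the authors introduce ``valid decompositions'' --- decompositions satisfying eight interlocking conditions tracking the set $X_{\ell'}(K)$ of generating vertices, a distinguished vertex $v^*(\mathcal T)$ or the witnessing arc of $K$, and the unique large piece $K'(\mathcal T)$ of $K$ --- and prove an augmentation lemma that repeatedly shrinks $K'(\mathcal T)$ while preserving validity. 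This is what yields that a non-small component $K$ has at most $\ell'-\alpha+1$ children with at most $\ell'$ edges per spanning tree, which is exactly the input to the density computation. Nothing in your sketch substitutes for this; saying that one may ``be wasteful by a bounded amount at each level'' is the desired conclusion, not an argument for it.

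A secondary problem is that your final counting is not set up so that it can close. You propose to show that $K$ together with the forced edges of $F_1,\dots,F_k$ alone is too dense, but a single oversized component need not by itself violate $\gamma(G)\le k+\frac{d}{d+k+1}$. The paper instead works in the exploration subgraph $H_{\mathcal T}$: it first proves that the red edge density of $H_{\mathcal T}$ is at most $\frac{d}{d+k+1}$, using that every vertex other than $r$ has exactly $k$ outgoing blue arcs --- and this is where the reduction to $k$ spanning trees (which you bypass by invoking Nash--Williams for arbitrary forests) is genuinely needed --- and then derives a contradiction by showing that every non-small red component together with its small children has average red density at least $\frac{d}{d+k+1}$. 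That last step requires both the special-path bound $e(K)+e(C)\ge d'$ for a parent $K$ and child $C$, and the child-count bound above; without these two ingredients the arithmetic you describe cannot be carried out.
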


As a remark, we note that almost two years ago, Daqing Yang announced a similar result with Yaqin Zhang and Chenbo Zhu in a conference (ICCM 2022). After submission of this as a preprint, we learned that their result has also been submitted, but we have not seen it.

We now briefly describe how we prove Theorem \ref{thm:approxSNDTC}. In Section \ref{defcounterexample}, we describe how we pick our counterexample. For those who have read the papers on the Strong Nine Dragon Tree Conjecture, it follows the same set up as usual. We fix positive integers $k$ and $d$,  and we argue that a vertex-minimal counterexample to Theorem \ref{thm:approxSNDTC} for our choice of $k$ and $d$ decomposes into $k+1$ forests where $k$ of them are spanning trees. Let $F$ be the forest which is not spanning (which exists, otherwise the fractional arboricity is too large). We pick our decomposition such that we minimize the number and sizes of large components in $F$ (i.e.\ those with more than $d' := \dPrimeWithoutUsingEll$ edges). We then define a subgraph, which we call the exploration subgraph, from some component $R^*$ of $F$ which is too large, to be the graph we focus on for the paper. We order the components of $F$ in a tree-like structure with $R^*$ being the root. 
The goal now is to push edges away from $R^*$ such that there is enough room to split $R^*$ into smaller components getting the decomposition closer to satisfy the theorem. If there are no possibilities for such operations anymore, we can prove that the density of the exploration subgraph is too large and thus, the fractional arboricity of $G$ is also too large, causing a contradiction.

To do this, we first review the special paths lemma (Lemma \ref{lemma:specialPaths}) in Section \ref{sec:smallComponentsNoSmallChildren}, which was proven in \cite{ndtt}: note that a parent and a child component are always linked by an edge of a spanning tree. If we add this edge to $F$ and the resulting component is not too large, we can take away another edge from a component that is closer to the root in exchange (i.e.\ we move it to one of the spanning trees).

With this in hand, to prove that the exploration subgraph has too large fractional arboricity, it suffices to show that each component of $F$ has few small children. We do this in Section \ref{sec:boundNumSmallChildren}. This is where the advancement lies. Prior to this paper, it has not been clear how to ensure any constant number (assuming $k$ and $d$ are fixed) of small children if $d > 2(k+1)$.
We proceed similar to the approach in \cite{sndtcDLeqKPlusOne}: in most cases it is possible to exchange the spanning-tree edge between a (relatively large) parent $K$ and a (small) child component with an edge of $K$. This splits $K$ into two parts and one of the parts is linked with the small child. If we are lucky, the two new components are smaller than $K$, which might already mean progress. But in the worst case the joined component could be too large. But if $K$ has many small children and we do many of these exchanges in a row such that every part of $K$ is linked with at most one small child, then we can guarantee that none of the new components has more edges than $K$ initially had. After we achieved this, we might need to perform a special path augmentation at a spanning-tree edge to another remaining small child: for very technical reasons the tree structure of the components could have been jumbled up by the previous operations, which the special path augmentation makes up for. If there are at least $\valueOfEllPlusOneInText + 1$ small children connected to $K$ with edges of the same spanning tree, this augmentation technique succeeds. In \cite{sndtcDLeqKPlusOne} the authors implemented this approach for $d \leq 2(k+1)$, where $3$ children have to be considered, and achieved the same edge bound as this paper does, however, using a slightly lengthier argument. 
Finding a general rule with which edge of $K$ a spanning-tree edge to a small child is exchanged and in which order the spanning-tree edges should be exchanged is the main achievement of this paper.
Note that for $d \leq k + 1$ the edge bound is $d$ and thus best possible. We note that the argument in this paper does not prove the Strong Nine Dragon Tree Conjecture when $k+1 < d \leq 2(d+1)$, which was shown in \cite{mies2024strong}. The argument in \cite{mies2024strong} required a reconfiguration argument on components which need not be small in certain cases, and a more complicated counting argument to obtain the optimal bound.

After it has been accomplished that $K$ has at most $\valueOfEllPlusOneInText$ small children generated from a single spanning tree, it is a routine check to argue that the density of the exploration subgraph is too large.

The paper is organized as follows. In Section \ref{defcounterexample} we set up all definitions needed to define the counterexample. In Section \ref{sec:smallComponentsNoSmallChildren}, we review the special paths algorithm, and prove small components do not have small children. In Section \ref{sec:exchangeEdges}, we prove the root component has no small children, and also introduce a useful exchange operation. In Section \ref{sec:boundNumSmallChildren}, we show each component has only few small children. In Section \ref{sec:finish}, we perform the density calculation and finish the proof.

\section{Defining the Counterexample}
\label{defcounterexample}

The goal of this section is to set up everything we need to define  a minimal counterexample to Theorem \ref{thm:approxSNDTC}. First we pin down some basic notation. For a path $P$ with $l$ vertices, we will write $P = [v_1,\ldots,v_l]$ where $v_i v_{i+1}$ is an edge for all $i \in \{1,\ldots,k-1\}$. As we will also consider digraphs, we will use the notation $(u,v)$ to be a directed edge from $u$ to $v$, and we extend the above notation for paths to directed paths if directions are used. 
In a tree $T$ with undirected edges let $\pathIn{T}{x}{y}$ denote the unique path in $T$ from $x$ to $y$.
For a tree $T$ which has its edges directed towards a root vertex $r$, and $x, y \in V(T)$ such that $x$ is a descendant of $y$ in $T$, we let $\pathIn{T}{x}{y}$ be the unique directed path from $x$ to $y$ in $T$.

For the rest of the paper we fix integers $k,d \in \mathbb{N}$ where $k,d \geq 1$. Furthermore, we let $\ell := \valueOfEllInText = \valueOfEllPlusOneInText - 1$ and $d' := \dPrimeWithoutUsingEll = \dPrimeUsingEll$. We will later learn why $\ell$ is an important number. The following fact will be useful throughout the paper:

\begin{obs} \label{obs:twoEllPlusOne}
    $2\ell + 1 \leq d \leq d'$.
\end{obs}
\begin{proof}
    It is clear that $2\ell + 1 \leq d$ holds. For the second inequality, note if $\ell = 0$, we have $d = d'$. If $\ell \geq 1$, we have that $d > k + 1$ and thus, $\frac{1}{2}\valueOfEllPlusOneInText < \frac{d}{k+1}$, which yields $d < d'$.
\end{proof}

We always assume that we have a graph $G$ which is a vertex-minimal counterexample to Theorem \ref{thm:approxSNDTC}. The first observation we need is that $G$ decomposes into $k$ spanning trees and another forest. This fact follows from a minor tweak to the proof of Lemma~2.1 of \cite{ndtt}. Thus, we omit the proof.

\begin{lemma}[Lemma~2.1 \cite{ndtt}] \label{lemma:minimalCounterExample}
    Every graph $G$ that is a vertex-minimal counterexample to Theorem \ref{thm:approxSNDTC} admits a decomposition into forests $T_1, \dots, T_k, F$ such that $T_1, \dots, T_k$ are spanning trees.
\end{lemma}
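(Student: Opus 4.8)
The plan is to reproduce the proof of Lemma~2.1 of \cite{ndtt}; the only change is cosmetic, namely that where that argument carries along ``the special forest has maximum degree $d$'' we carry along ``every component of the special forest has at most $d'$ edges'', a property preserved by exactly the operations used there and irrelevant to the step producing the spanning trees. First I would record the routine consequences of minimality: deleting an isolated vertex from a counterexample leaves a counterexample on fewer vertices, and a good decomposition of a disconnected graph is the disjoint union of good decompositions of its components; hence $G$ is connected with at least two vertices.

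The substantive step is that $G$ has $k$ edge-disjoint spanning trees. Suppose not. By the Nash-Williams--Tutte theorem there is a partition $\mathcal P = \{V_1, \dots, V_r\}$ of $V(G)$ with $r \ge 2$ and $e_G(\mathcal P) \le k(r-1) - 1$; choose one minimizing $r$. Let $B$ be the multigraph obtained from $G$ by contracting each $V_i$ to a vertex and deleting loops, so that $E(B)$ is exactly the set of edges of $G$ joining distinct parts. Minimality of $r$ forces $\fracArb(B) < k$: otherwise some set $S$ of at least two parts, but not all of them, would contain at least $k(|S|-1)$ edges of $B$, and merging the parts in $S$ would produce a violating partition with fewer than $r$ parts. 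Hence, by Nash-Williams' theorem, $E(B)$ decomposes into $k$ forests $C_1, \dots, C_k$. Each $G[V_i]$ has fewer than $v(G)$ vertices and inherits the bound on fractional arboricity from $G$, so it is not a counterexample and therefore decomposes into $k+1$ forests $A^i_1, \dots, A^i_k, F^i$ with every component of $F^i$ having at most $d'$ edges. Setting $A_j := \bigcup_i A^i_j$, each $A_j \cup C_j$ is a forest --- a cycle in it would lie in no single part, hence would use an edge of $C_j$, and contracting the parts would turn it into a closed walk with distinct edges inside the forest $C_j$, which is impossible --- and $F := \bigcup_i F^i$ is a forest every component of which has at most $d'$ edges. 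Since $E(G)$ is the disjoint union of the sets $E(G[V_i])$ together with $E(B)$, the forests $A_1 \cup C_1, \dots, A_k \cup C_k, F$ constitute a good decomposition of $G$, a contradiction.

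Granting the $k$ edge-disjoint spanning trees, the lemma follows by a standard matroid-union exchange. Since $\fracArb(G) < k+1$, Nash-Williams' theorem partitions $E(G)$ into $k+1$ forests; let $I$ be the union of $k$ of them and $F$ the remaining one. As long as the $k$ forests comprising $I$ are not all spanning trees, $I$ is not a basis of the $k$-fold union of the cycle matroid of $G$ and hence does not span that matroid, so some edge of $F$ may be moved into $I$ so that $I$ is still a union of $k$ forests; this keeps $F$ a forest and increases $|I|$ by one. After finitely many such moves $I$ is a basis, that is, a disjoint union of $k$ forests with $v(G)-1$ edges apiece, which are therefore spanning trees $T_1, \dots, T_k$; and $F = E(G) \setminus I$ is the required forest.

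I expect the only delicate point to be the middle of the second paragraph: choosing the violating partition so as to minimize the number of parts (so that the contracted multigraph $B$ is sparse enough to be split into $k$ forests), and then verifying that re-inserting the between-part edges into the glued within-part forests creates no cycles. Everything else is the bookkeeping of \cite{ndtt} together with a textbook matroid-union argument, and replacing the maximum-degree bound by the component-size bound changes nothing.
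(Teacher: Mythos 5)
Your proof is correct and follows essentially the same route as the paper, which simply invokes the proof of Lemma~2.1 of \cite{ndtt} with the maximum-degree condition replaced by the component-size condition: establish $k$ edge-disjoint spanning trees via Nash-Williams--Tutte plus minimality (contracting a violating partition with fewest parts and gluing the parts' good decompositions to the $k$ forests of the contracted multigraph), then convert a $(k+1)$-forest decomposition into $k$ spanning trees plus a forest by matroid-union augmentation. The details you flag as delicate (minimizing the number of parts and the no-new-cycle check when re-inserting cross edges) are handled correctly, so no gap remains.
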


Note that if $G$ decomposes into $k$ spanning trees and a forest $F$, it follows that $F$ is disconnected, or otherwise, $\fracArb(G) = k+1$, a contradiction.\\
Given a decomposition of $G$, we will want to measure how close it is to satisfying Theorem  \ref{thm:approxSNDTC}. This is captured in the next definition:

\begin{definition}
    The \textit{residue function $\rho(F)$} of a forest $F$ is defined as the tuple $(\rho_{v(G)-1}(F), \\  \rho_{v(G)-2}(F), \ldots, \rho_{d'+1}(F))$, where $\rho_i(F)$ is the number of components of $F$ having $i$ edges.
\end{definition}

We will want to compare residue function values of different forests using lexicographic ordering and are interested in the decomposition with one forest minimizing the residue function.

\begin{notation}
    Over all decompositions into $k$ spanning trees and a forest $F$ we choose one where $F$ minimizes $\rho$ with respect to lexicographic order. We call this minimum tuple $\rho^*$. This forest $F$ has a component $R^*$ containing at least $d' + 1$ edges.
\end{notation}

We will now choose a vertex from $R^*$ which will function as a root for the $k$ spanning trees of $G$. We pick $r$ from ``the center'' of $R^*$, which is formalized in the following lemma.

\begin{lemma} \label{lemma:rExistence}
    The component $R^*$ contains a vertex $r$ such that for every edge $e \in E(R^*)$ that is incident to $r$ we have that the component of $r$ in $R^* - e$ has at least $\ell + 1$ edges.
\end{lemma}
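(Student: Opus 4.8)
The plan is to take $r$ to be a centroid-type vertex of the tree $R^*$ — one minimizing the number of edges in the largest component of $R^* - r$ — and to show this choice already works. Write $m := e(R^*)$. Since $R^*$ contains at least $d' + 1$ edges and Observation~\ref{obs:twoEllPlusOne} gives $d' \geq d \geq 2\ell + 1$, we have $m \geq 2\ell + 2$; also $R^*$, being a connected component of the forest $F$, is a tree. For $x \in V(R^*)$ let $B(x)$ be the maximum of $e(C)$ over all components $C$ of $R^* - x$. The first step is the translation: if $e = rv$ is an edge incident to $r$, then, since $R^*$ is a tree, deleting $e$ splits it into exactly two pieces, and the piece containing $v$ is precisely the component of $v$ in $R^* - r$; hence it has at most $B(r)$ edges, so the piece containing $r$ has at least $m - 1 - B(r)$ edges. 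Therefore it suffices to pick $r$ with $B(r)$ minimum and prove $B(r) \leq m - \ell - 2$.

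To prove this, suppose for contradiction $B(r) \geq m - \ell - 1$, choose a component $C$ of $R^* - r$ with $e(C) = B(r)$, and let $v$ be the neighbor of $r$ lying in $C$. I would then compare $B(v)$ with $B(r)$ by listing the components of $R^* - v$: one of them is $R^*$ restricted to $V(R^*) \setminus V(C)$, with $m - 1 - e(C)$ edges, and the remaining ones are the subtrees obtained from $C$ by deleting $v$, each with at most $e(C) - 1$ edges — here one uses that $e(C) \geq m - \ell - 1 \geq \ell + 1 \geq 1$, so $C$ has an edge and $v$ has a neighbour inside $C$. Now $m - 1 - e(C) \leq \ell$, and since $e(C) \geq \ell + 1$ this quantity is strictly smaller than $e(C)$; and every subtree of $C - v$ has strictly fewer than $e(C)$ edges. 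Hence $B(v) < e(C) = B(r)$, contradicting minimality of $B(r)$. Thus $B(r) \leq m - \ell - 2$, and by the translation above every component of $r$ in $R^* - e$ has at least $m - 1 - (m - \ell - 2) = \ell + 1$ edges, as required.

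This is essentially the classical centroid lemma for trees, but with the output bound tuned to $m - \ell - 2$ rather than the usual $\lfloor m/2\rfloor$; the only care needed is that the two estimates $m - 1 - e(C) \leq \ell < e(C)$ and $e(C') \leq e(C) - 1$ both rely on $m \geq 2\ell + 2$ (so that $e(C) \geq \ell+1 \geq 1$), which is exactly what Observation~\ref{obs:twoEllPlusOne} provides. I do not expect a genuine obstacle: this is a self-contained combinatorial fact about trees proved by a one-line minimality argument, and the slightly unusual shape of the bound is the only subtlety.
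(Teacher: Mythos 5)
Your proof is correct and is essentially the paper's argument in dual form: since in the tree $R^*$ the $r$-side of an incident edge has $e(R^*)-1$ minus the away-side's edge count, minimizing your $B(r)$ is the same extremal choice as the paper's maximization of the smallest kept side, and your step into the largest component of $R^*-r$ is exactly the paper's step across the deficient edge to the neighbour $x$. Both arguments rest on the same quantitative input $e(R^*) \geq d'+1 \geq 2\ell+2$ coming from Observation \ref{obs:twoEllPlusOne}.
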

\begin{proof}
    For every $v \in V(R^*)$ let $\beta(v) \in \mathbb N$ be the maximum number such that for every edge $e \in E(R^*)$ that is incident to $v$ we have that the component of $v$ in $R^* - e$ has at least $\beta(v)$ edges.\\
    Suppose towards a contradiction that the lemma is false. Let $r' \in V(R^{*})$ such that $\beta(r')$ is maximized over all vertices. Thus $\beta(r') \leq \ell$. We will show that a neighbour of $r'$ has strictly larger $\beta$.
    Let $r''r' \in E(R^*)$ such that the component of $r'$ in $R^* - r''r'$ has exactly $\beta(r')$ edges.
    We show that $\beta(r'') > \beta(r')$.
    First, note that the component of $r''$ in $R^* - r''r'$ has at least
    \[e(R^*) - (\beta(r')+1) \geq d' + 1 - (\ell + 1) \geq \ell + 1 > \beta(r')\]
    edges by Observation \ref{obs:twoEllPlusOne}.
    Now, let $x \in V(R^*)$ such that $r''x \in E(R^*)$ and $x \neq r'$. Then the component of $r''$ in $R^* - r''x$ has at least $\beta(r') + 1$ edges, contradicting our choice of $r'$. 
\end{proof}

We fix one choice of $r$ with the properties described in Lemma \ref{lemma:rExistence} for the rest of the paper.

\begin{definition}
    We define $\mathcal F$ to be the set of decompositions into forests $(T_1, \dots, T_k, F)$ of $G$ such that $T_1, \dots, T_k$ are directed spanning trees of $G$ where the arcs of $T_1, \ldots T_k$ are directed towards $r$ and $F$ is an undirected forest containing the component $R^*$.\\
    We let $\mathcal F^* \subseteq \mathcal F$ be the set of decompositions $(T_1,\ldots,T_k,F) \in \mathcal F$ such that $\rho(F) = \rho^*$.
\end{definition}

The next definition is simply to make it easier to talk about decompositions in $\mathcal F$.

\begin{definition}
    Let $\mathcal T = (T_1, \dots, T_k, F) \in \mathcal F$. We say that the arcs of $T_1, \dots, T_k$ are \textit{blue} and the (undirected) edges of $F$ are \textit{red}. We define $E(\mathcal T) := E(T_1) \cup \dots \cup E(T_k) \cup E(F)$. 
    Furthermore, we let $\redForest(\mathcal T) := F$ and for any $b \inOneToK$ we let $\blueTree(\mathcal T) := T_b$. For any induced subgraph $H$ of $(V(G), E(\mathcal T))$ we call the connected components of $\redForest(\mathcal T)[V(H)]$ \textit{red components of $H$} and let $e_r(H)$ denote the number of red edges of $H$.
\end{definition}

Finally, we can define the critical subgraph which we will focus on for the rest of the paper:

\begin{definition} \label{def:explSubgraph}
    Let $\mathcal T \in \mathcal F$.
    The \textit{exploration subgraph} $H_{\mathcal T}$ of $\mathcal T$ is the subgraph of the mixed graph $(V(G), \; E(\mathcal T))$ that is induced by the vertex set consisting of all vertices $v$ for which there is a sequence of vertices $r=x_1, \dots, x_l = v$ such that for all $1 \leq i < l$ we have that $(x_i, x_{i+1}) \in \bigcup_{b \inOneToK} E(\blueTree(\mathcal T))$ or 
    $x_i x_{i+1} \in E(\redForest(\mathcal T))$.
\end{definition}

In the next lemma we show that the red subgraph of an exploration subgraph has a natural density bound.

\begin{lemma} \label{lemma:densityRedExplSG}
    \[\frac{e_r(H_{\mathcal T})}{v(H_{\mathcal T}) - 1} \leq \density.\]
\end{lemma}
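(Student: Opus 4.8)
The plan is to exploit that the exploration subgraph is an \emph{induced} subgraph of $G$ in which each of the $k$ spanning trees is still spanning, so that the Nash--Williams-type density bound on $G$ forces the red part to be sparse.

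First I would set $W := V(H_{\mathcal T})$ and prove that $W$ is closed under taking parents in every $T_b$: if $v \in W \setminus \{r\}$ and $p$ is the parent of $v$ in $T_b$, then appending the arc $(v,p)$ to the \emph{end} of a sequence $r = x_1, \dots, x_l = v$ witnessing $v \in W$ produces a sequence witnessing $p \in W$. Since $r \in W$ and all arcs of $T_b$ point towards $r$, following parent pointers from an arbitrary $w \in W$ shows that the restriction of $T_b$ to $W$ is connected; being a subgraph of the tree $T_b$, it is therefore itself a tree on $|W|$ vertices, hence has exactly $v(H_{\mathcal T}) - 1$ edges. Because the decomposition partitions $E(G)$, summing over $b$ gives
\[
e(H_{\mathcal T}) \;=\; k\bigl(v(H_{\mathcal T}) - 1\bigr) + e_r(H_{\mathcal T}).
\]

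Next I would observe that $v(H_{\mathcal T}) \ge 2$: the component $R^*$ has at least $d' + 1 \ge 2$ edges, hence is connected on at least three vertices, so $r$ has a red neighbour, and that neighbour lies in $W$ by the very definition of $H_{\mathcal T}$. Thus $H_{\mathcal T}$ is a subgraph of $G$ on at least two vertices, and the hypothesis $\fracArb(G) \le k + \density$ gives $e(H_{\mathcal T}) \le \bigl(k + \density\bigr)\bigl(v(H_{\mathcal T}) - 1\bigr)$. Substituting the displayed equality and cancelling $k\bigl(v(H_{\mathcal T}) - 1\bigr)$ from both sides yields $e_r(H_{\mathcal T}) \le \density \cdot \bigl(v(H_{\mathcal T}) - 1\bigr)$, which is exactly the claimed inequality.

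I do not anticipate a genuine obstacle here: the only step requiring real attention is the ``closed under parents'' claim, which is precisely where the reachability definition of $H_{\mathcal T}$ meshes with the root-directedness of the $T_b$ — one has to append the parent arc at the end of the witnessing sequence, not the start. Everything after that is the routine counting argument with the fractional arboricity bound.
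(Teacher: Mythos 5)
Your proof is correct and takes essentially the same route as the paper: the paper also counts exactly $k\bigl(v(H_{\mathcal T})-1\bigr)$ blue arcs in $H_{\mathcal T}$ (via out-degrees, implicitly using the closure-under-parents fact you spell out) and then applies the fractional arboricity bound to $H_{\mathcal T}$ and cancels the blue contribution. Your extra care about closure under parents and $v(H_{\mathcal T})\geq 2$ just makes explicit what the paper leaves implicit.
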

\begin{proof}
    We have that $H_{\mathcal T}$ has exactly $k(v(H_{\mathcal T}) - 1)$ blue arcs since each vertex $v \in V(H_{\mathcal T}) - r$ has exactly $k$ blue outgoing arcs and $r$ has no blue outgoing arc in $E(\mathcal T)$. We conclude 
    \[
    k + \density
    \geq \fracArb(G)
    \geq \frac{e(H_{\mathcal T})}{v(H_{\mathcal T}) - 1}
    \geq \frac{k(v(H_{\mathcal T}) - 1) + e_r(H_{\mathcal T})}{v(H_{\mathcal T}) - 1}
    = k + \frac{e_r(H_{\mathcal T})}{v(H_{\mathcal T}) - 1}.
    \]
\end{proof}

We will want to focus on red components of $\mathcal H_{\mathcal T}$ with low edge density:

\begin{definition}
    A red component $K$ is \textit{small} if $e(K) \leq \ell$.
\end{definition}

Note that $e(K) \leq \ell$ if and only if $e(K) < \frac{d}{k+1}$, which yields the following observation.

\begin{obs} \label{obs:ellSmallerDensity}
    A red component $K$ is \textit{small} if and only if
    $\frac{e(K)}{v(K)} < \density$.
\end{obs}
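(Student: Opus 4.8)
The plan is to rewrite the density inequality $\frac{e(K)}{v(K)} < \density$ as a condition on $e(K)$ alone, and then invoke the equivalence $e(K)\le\ell \iff e(K)<\frac{d}{k+1}$ recorded just before the statement. The first ingredient is that $K$ is a tree: its edges are red, hence all lie in the forest $\redForest(\mathcal T)$, so $K$ is itself a forest, and being connected it is a tree. Therefore $v(K) = e(K)+1$. (This also covers the degenerate case $v(K)=1$, $e(K)=0$: there both sides of the asserted equivalence hold, since $\density>0$ as $d\ge 1$.)

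Substituting $v(K)=e(K)+1$ and clearing the positive denominators $e(K)+1$ and $d+k+1$, one gets the chain
\[
\frac{e(K)}{v(K)} < \density
\iff e(K)(d+k+1) < d\bigl(e(K)+1\bigr)
\iff e(K)(k+1) < d
\iff e(K) < \frac{d}{k+1},
\]
every step being reversible. Combined with the remark immediately preceding the statement, which gives $e(K) < \frac{d}{k+1} \iff e(K)\le \ell$ (this is where $\ell = \valueOfEllInText$ and the integrality of $e(K)$ enter, via the elementary identity $\lceil \frac{d}{k+1}\rceil = \lfloor\frac{d-1}{k+1}\rfloor + 1 = \ell+1$), we conclude $\frac{e(K)}{v(K)} < \density \iff e(K)\le\ell$, i.e.\ $\iff K$ is small.

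I do not anticipate any real obstacle; the whole argument is a one-line cross-multiplication together with $v(K)=e(K)+1$. The only point that deserves a moment's care is the last equivalence: $e(K)<\frac{d}{k+1}$ is the same as $e(K)\le\lceil\frac{d}{k+1}\rceil-1=\ell$ precisely because $e(K)$ is a nonnegative integer, so this step must not be applied without the integrality of $e(K)$.
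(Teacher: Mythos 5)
Your proof is correct and matches the paper's (implicit) argument: the paper simply records $e(K)\le\ell \iff e(K)<\frac{d}{k+1}$ and leaves the rest — namely that a red component is a tree, so $v(K)=e(K)+1$, and the cross-multiplication $\frac{e(K)}{e(K)+1}<\frac{d}{d+k+1}\iff e(K)(k+1)<d$ — to the reader, exactly as you spell it out. Your extra care about integrality and the degenerate single-vertex case is fine but not a departure from the paper's route.
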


Now we turn our focus to the notion of legal orders, which is an ordering of the red components of an exploration subgraph that loosely tells us in what order we should augment the decomposition.

\begin{definition} \label{def:legalOrder}
    Let $\mathcal T \in \mathcal F$ and let $\sigma = (R_1,\ldots,R_t)$ be a sequence of all red components in $H_{\mathcal T}$. We say $\sigma$ is a \textit{legal order} for $\mathcal T$ if $R_1 = R^*$, and further, for each $1 < j \leq t$, there is an $i_j < j$ such that there is a blue arc $(x_j, y_j)$ with $x_j \in V(R_{i_j})$ and $y_j \in V(R_j)$. 
\end{definition}

It will be useful to compare legal orders, and we will again do so using lexicographic ordering.

\begin{definition}
    Let $\mathcal T, \mathcal T' \in \mathcal F$ and suppose that $\sigma = (R_1, \dots, R_t)$ and $\sigma' = (R'_1, \dots, R'_{t'})$ are legal orders for $\mathcal T$ and $\mathcal T'$, respectively. We say $\sigma$ is \textit{smaller than} $\sigma'$, denoted $\sigma < \sigma'$, if $(e(R_1), \dots, e(R_t))$ is lexicographically smaller than $(e(R'_1), \dots, e(R'_{t'}))$. If $t \neq t'$, we extend the shorter sequence with zeros to make the orders comparable.
\end{definition}

To make it easier to discuss legal orders, we introduce some more vocabulary:

\begin{definition}
    Suppose $\sigma = (R_1, \dots, R_t)$ is a legal order for $\mathcal T \in \mathcal F$. If $v \in V(R_j)$ we write $i_\sigma(v) := j$.
    For $U \subseteq V(H_{\mathcal T})$ we define $i_\sigma(U) := \min_{u \in U} i_\sigma(v)$ (in particular, $i_\sigma(\varnothing) = \infty$) and for any subgraph $H \subseteq H_{\mathcal T}$ we let $i_\sigma(H) := i_\sigma(V(H))$.
\end{definition}

For the purposes of tiebreaking how we pick legal orders, we introduce the next graph:

\begin{definition} \label{def:auxTree}
    Let $\mathcal T \in \mathcal F$ and let $\sigma = (R_1,\ldots,R_t)$ be a legal order for $\mathcal T$. 
    Compliant to Definition \ref{def:legalOrder} we choose a blue arc $(x_j, y_j)$ for all $1 < j \leq t$. There might be multiple possibilities for this, but we simply fix one choice for $\mathcal T$ and $\sigma$.
    By removing all the blue arcs from $H_{\mathcal T}$ that are not in $\{(x_j, y_j) \: | \: 1 < j \leq t\}$, we obtain
    the \textit{auxiliary tree of $\mathcal T$ and $\sigma$} and denote it by $Aux(\mathcal T, \sigma)$.
    We always consider $Aux(\mathcal T, \sigma)$ to be rooted at $r$.
\end{definition}

Note that in the blue spanning trees of decompositions of $\mathcal F$ the arcs are directed towards $r$ while in an auxiliary tree blue arcs are directed away from the root $r$.\\
With this, we are in position to define our counterexample. As already outlined, $G$ is a vertex-minimal counterexample to Theorem \ref{thm:approxSNDTC}. Further, we pick a legal order $\sigma^* = (R^*_1, \ldots, R^*_{t^*})$ for a decomposition $\mathcal T^* \in \mathcal F^*$ such that there is no legal order $\sigma$ with $\sigma < \sigma^*$ for any $\mathcal T \in \mathcal F^*$. We will use these notations for the minimal legal order and decomposition throughout the rest of the paper.

Additionally, we define a few terms for $\mathcal T^*$ and $\sigma^*$.
\begin{definition}
    Let $j \in \{2, \ldots, t\}$ and let $x_j$, $y_j$ and $i_j$ be defined as they were fixed in Definitions \ref{def:legalOrder} and \ref{def:auxTree}.
    Let $w(K) := y_j$ and we call $(x_j, y_j)$ the \textit{witnessing arc of $K$}.\\
    Furthermore, we say that $R_{i_j}$ is the parent of $R_j$. On the other hand, we call $R_j$ a child of $R_{i_j}$ (that is generated by $(x_j, y_j)$).
\end{definition}

We now outline how we will show that the fractional arboricity of the counterexample graph is larger than $\density$.
In Lemma \ref{lemma:smallNoSmallChildren} we will show that the parent of a small red component is not small. In Lemma \ref{lemma:numSmallChildren} we will bound the number and sizes of the small children of non-small components. Lemma \ref{lemma:densityKPlusChildren} will show that this bound is enough such that on average the density of a parent component and all its small children is at least $\density$. Finally, we will show that this leads to a contradiction to Lemma \ref{lemma:densityRedExplSG}.

\section{Small Components Do Not Have Small Children} \label{sec:smallComponentsNoSmallChildren}

In this section we consider the first method to find a smaller legal order or shrink a component with more than $d'$ edges. This method from \cite{ndtt} roughly works the following way:
if a blue arc $e$ connecting two red components can be coloured red without increasing the residue function, then in certain cases we can find a red edge $e'$ that can be coloured blue in exchange. In order to find this edge we need to look for a certain blue directed path that ends at $e$ and starts at a vertex $v_0$ and $v_0$ has to be closer to $R^*$ with respect to the legal order than the tail of $e$. First, we formalize the requirements for such a blue path:

\begin{definition}
    Let $\sigma = (R_1, \dots, R_t)$ be a legal order for $\mathcal T \in \mathcal F$. We call a blue directed path $P=[v_0, v_1, \dots, v_l] \subseteq \bigcup_{b \inOneToK} \blueTree(\mathcal T)$ that is contained in $H_{\mathcal T}$ \textit{special with respect to $\mathcal T$, $\sigma$ and $(v_{l-1}, v_l)$} if $v_{l-1}$ and $v_l$ are in different components of $\redForest(\mathcal T)$, $i_\sigma(v_l) > i_\sigma(v_0)$ and furthermore, $v_0$ needs to be an ancestor of $v_{l-1}$ in $Aux(\mathcal T, \sigma)$ if both of them are in the same component of $\redForest(\mathcal T)$. \\
    For two special paths $P=[v_0, v_1, \dots, v_l]$ and $P'=[v'_0, v'_1, \dots, v'_{l'}]$ with respect to $\mathcal T$, $\sigma$ and $(v_{l-1}, v_l)$ we write $P \leq P'$ if $i_\sigma(v_0) < i_\sigma(v'_0)$, or if $i_\sigma(v_0) = i_\sigma(v'_0)$ and $v_0$ in $Aux(\mathcal T, \sigma)$ is an ancestor of $v'_0$. We call a special path $P$ with respect to $\mathcal T$, $\sigma$ and $(x, y)$ \textit{minimal} if there is no special path $P' \neq P$ with respect to $\mathcal T$, $\sigma$ and $(x, y)$ such that $P' \leq P$.
\end{definition}
Note that for every special path $P'$ with respect to $\mathcal T$, $\sigma$ and $(x, y)$ there exists a minimal special path $P$ with respect to $\mathcal T$, $\sigma$ and $(x, y)$ such that $P \leq P'$. Furthermore, note that if we have a minimal special path $P=[v_0, v_1, \dots, v_l]$ with respect to $\mathcal T$, $\sigma$ and $(v_{l-1}, v_l)$, we have $v_0 \neq r$ because $r$ has no outgoing blue arc by construction. Therefore, $v_0$ has a parent vertex in $Aux(\mathcal T, \sigma)$, which we denote by $v_{-1}$. Note that the edge $v_{-1} v_0$ is red because of the minimality of $P$ and since all blue arcs in $Aux(\mathcal T, \sigma)$ are directed away from $r$ in the auxiliary tree.\\
The following lemma describes which modifications to the decomposition can be made if a minimal special path exists and how they change the legal order. An illustration of these modifications can be seen in Figure \ref{fig:specialPaths}.

\begin{lemma}[cf.\ Lemma 2.4 and Corollary 2.5 in \cite{ndtt}]	\label{lemma:specialPaths}
    Let $\sigma = (R_1, \dots, R_t)$ be a legal order for $\mathcal T = (T_1, \ldots, T_k, F) \in \mathcal F$.
    Furthermore, let $P = [v_0, v_1, \dots, v_l]$ be a minimal special path with respect to $\mathcal T$, $\sigma$ and $(v_{l-1},v_l)$. Let $i_0 := i_\sigma(v_0)$.\\
    Then there is a partition into forests $\mathcal T' = (T'_1, \dots, T'_k, F')$ of $G$ such that $T'_1, \dots, T'_k$ are spanning trees rooted at $r$ whose arcs are directed to the respective parent vertex, forest $F'$ exclusively consists of undirected edges and if $R'$ denotes the component in $F'$ containing $r$, we have that
    \begin{enumerate}
        \item $F' = \big( F + v_{l-1} v_l \big) - v_{-1}v_0$.
        \item $(v_0, v_{-1}) \in \bigcup_{b=1}^k E(T'_b)$.
        \item $\big\{(x, y) \in E(T'_b) \: | \: i_\sigma(x) < i_0\big\} 
        = \big\{(x, y) \in E(T_b) \: | \: i_\sigma(x) < i_0\big\}$ for all $b \inOneToK$.
        \item $\begin{aligned}[t]
            \bigcup_{b=1}^k \big\{uv \: | \: (u, v) \in E(T'_b)\big\} 
            = \bigg(\Big(\bigcup_{b=1}^k \big\{uv \: | \: (u, v) \in E(T_b) \big\}\Big) - v_{l-1}v_l \bigg) + v_0v_{-1}.
        \end{aligned}$
        \item If $i_0 > 1$, then $R^* = R'$, $\mathcal T' \in \mathcal F$ and there exists a legal order $\sigma' = (R'_1, \dots, R'_{t'})$ for $\mathcal T'$ with $R'_j = R_j$ for all $j < i_0$ and $e(R'_{i_0}) < e(R_{i_0})$, where $R'_{i_0}$ is the component of $v_{-1}$ in $F'$. Thus, $\sigma' < \sigma$.
    \end{enumerate}
\end{lemma}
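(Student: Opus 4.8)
The plan is to construct $\mathcal{T}'$ explicitly by performing the swap $F' = (F + v_{l-1}v_l) - v_{-1}v_0$ on the red forest and then repairing the blue spanning trees so that they remain spanning trees directed toward $r$. First I would observe that adding the edge $v_{l-1}v_l$ to $F$ creates exactly one cycle if $v_{l-1}$ and $v_l$ were in the same red component, but by the special path definition they are in \emph{different} red components, so $F + v_{l-1}v_l$ is still a forest, merging two components; removing $v_{-1}v_0$ then splits off a piece. The key point to track is that the component of $r$ in $F'$ is still exactly $R^*$ (when $i_0 > 1$): since $v_0$ has $i_\sigma(v_0) = i_0 > 1$, the vertex $v_0$ is not in $R^* = R_1$, and one checks that removing $v_{-1}v_0$ does not disconnect $r$ from anything in $R_1$, because the path structure of $Aux(\mathcal{T},\sigma)$ guarantees $v_{-1}v_0$ lies "below" $v_0$ relative to $R^*$.

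Next I would handle the blue trees. The edge $v_{l-1}v_l$ is being recolored from blue to red, so we lose a blue arc; the edge $v_{-1}v_0$ is recolored from red to blue, so we gain one. The natural move is: in whichever tree $T_b$ contained the arc $(v_{l-1}, v_l)$, we want to add the arc $(v_0, v_{-1})$ instead. But simply swapping these two arcs need not yield a tree, so the special path $P = [v_0, \dots, v_l]$ is used to reroute: one walks along $P$ and shifts each arc $(v_i, v_{i+1})$ appropriately, using that $P$ is a blue directed \emph{path} in $\mathcal{T}$ (so in the trees-directed-toward-$r$ orientation, $(v_{i+1}, v_i)$ are arcs). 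Concretely, I expect the construction replaces, for the relevant indices, the arc into $v_i$ by the arc from $v_i$ along $P$, which is the standard "augmenting path" exchange in the matroid/tree setting; the effect on the edge \emph{sets} is precisely item 4, namely only $v_{l-1}v_l$ leaves and $v_0v_{-1}$ enters the union of blue edge sets. Item 2 is then immediate from the construction, and item 3 follows because the rerouting only touches arcs whose tails lie on $P$, all of which have $i_\sigma \geq i_0$ (this requires checking that every vertex of $P$ is in a component $R_j$ with $j \geq i_0$, which should follow from minimality of $P$ together with the definition of special path — if some vertex of $P$ had smaller index, one could extract a smaller special path).

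For item 5, assuming $i_0 > 1$: I would first argue $R' = R^*$ as above, hence $\mathcal{T}' \in \mathcal{F}$ (the blue trees are spanning trees rooted at $r$ by the rerouting, and $R^*$ is still a red component). The claim $i_{\sigma^*}(\Delta(\mathcal{T},\mathcal{T}')) = i_0$ says the symmetric difference of the decompositions is "localized" at components of index $\geq i_0$ and actually touches index exactly $i_0$: the touched edges are $v_{l-1}v_l$, $v_{-1}v_0$, and the rerouted arcs on $P$, all incident to components of index $\geq i_0$, and $v_{-1}$ has index exactly $i_0$ (it is in the same red component as $v_0$, since $v_{-1}v_0$ was a red edge — so $i_\sigma(v_{-1}) = i_\sigma(v_0) = i_0$). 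Then I would build $\sigma'$: keep $R'_j = R_j = R'_j$ for $j < i_0$, which is legal because by item 3 the blue arcs with tail-index $< i_0$ are unchanged so the witnessing arcs for these components still work; the component $R'_{i_0}$ of $v_{-1}$ in $F'$ is a proper subgraph of $R_{i_0}$ (we removed the edge $v_{-1}v_0$ from $R_{i_0}$ and $v_0$ ended up in a different component), so $e(R'_{i_0}) < e(R_{i_0})$; order the remaining components of $H_{\mathcal{T}'}$ after these to complete a legal order (one must check $H_{\mathcal{T}'}$'s remaining components can be legally ordered, which follows from reachability from $r$). The resulting $\sigma'$ is then lexicographically smaller than $\sigma$ at coordinate $i_0$.

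The main obstacle I anticipate is verifying that the blue rerouting along $P$ genuinely produces spanning trees and that item 4 holds on the nose — i.e.\ that the exchange does not accidentally create a cycle or disconnect a tree, and that no blue edge other than the designated two changes as an undirected edge. This is exactly the content of Lemma 2.4/Corollary 2.5 of \cite{ndtt}, and the delicate part is confirming that the ancestor condition in the definition of "special" (that $v_0$ is an ancestor of $v_{l-1}$ in $Aux(\mathcal{T},\sigma)$ when they share a red component) is precisely what makes the path $P$ usable for a valid exchange without interfering with the structure at indices $< i_0$; getting the bookkeeping of which arc replaces which, and the simultaneous check that $i_\sigma$ of every internal vertex of $P$ is at least $i_0$, right is where the real work lies. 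Since the excerpt explicitly cites \cite{ndtt} for this lemma, I would lean on that argument and focus the write-up on the adaptation needed for the modified value of $d'$, which in fact does not affect this lemma at all since $d'$ plays no role here.
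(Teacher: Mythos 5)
Your proposal takes essentially the same route as the paper, which in fact gives no proof of this lemma at all but imports it from Lemma 2.4 and Corollary 2.5 of \cite{ndtt}: the swap $F' = (F + v_{l-1}v_l) - v_{-1}v_0$, the repair of the blue trees along the special path, and the use of minimality to force every vertex of $P$ into a component of index at least $i_0$ (protecting items 3 and 5 below $i_0$) are exactly the cited argument, and you correctly observe that the modified value of $d'$ plays no role here. One slip that does not affect the plan: the blue arcs of $P$ are $(v_i, v_{i+1})$ themselves (each arc points from a vertex to its parent, toward $r$), not $(v_{i+1}, v_i)$ as your parenthetical claims.
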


\begin{figure}[!htb]
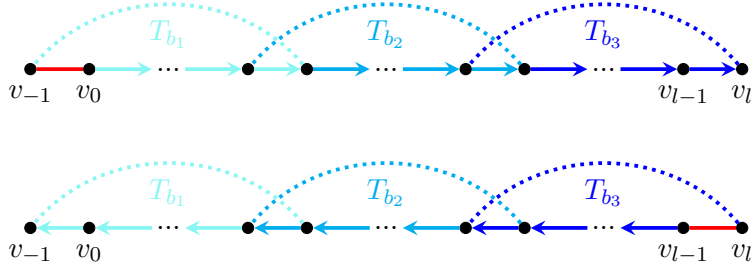

    \center 
    \specialPaths
    \center
    \specialPathsAugm
    \caption{
        An example of an augmentation of a minimal special path that consists of three segments of three different spanning trees. A blue dotted path is the unique path in its spanning tree between the two vertices when orientations are ignored. Note that the minimal special path can be chosen such that the dotted paths are edge-disjoint to the special path.
    }
    \label{fig:specialPaths}
\end{figure}

Since we want to find smaller orders than $\sigma^*$ in some of the following proofs to arrive at a contradiction, it is desirable that the fifth point holds in an application of Lemma~\ref{lemma:specialPaths}. In the event this is not the case, we can still gain more structure. We want to show this more formally with the next lemma:

\begin{lemma}	\label{lemma:v0NotInR}
    If Lemma \ref{lemma:specialPaths} is applicable such that $\rho(F + v_{l-1} v_l) = \rho^*$ holds, then $e(R_{i_0}) \leq d'$ and therefore $i_0 > 1$ such that 5.\ can be applied. Moreover, in this case we have $\mathcal T' \in \mathcal F^*$ for the partition obtained.
\end{lemma}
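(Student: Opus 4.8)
The plan is to apply Lemma~\ref{lemma:specialPaths} and then argue that the hypothesis $\rho(F + v_{l-1}v_l) = \rho^*$ forces the first alternative (point~5) to kick in, because otherwise we would contradict the minimality of $\rho^*$. First I would observe that by point~1 of Lemma~\ref{lemma:specialPaths}, $F' = (F + v_{l-1}v_l) - v_{-1}v_0$, so $F'$ is obtained from the forest $F^+ := F + v_{l-1}v_l$ by deleting a single edge. Deleting an edge from a forest can only split one component into two, and the two resulting components each have strictly fewer edges than the original; in particular this operation cannot increase any $\rho_i$ for $i$ large, and more carefully, it can only \emph{decrease} $\rho$ in lexicographic order or leave it unchanged, unless the deleted edge $v_{-1}v_0$ lies in a component of $F^+$ with at most $d'$ edges (in which case the split happens entirely below the threshold and $\rho$ is unaffected). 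So the key dichotomy is: either $v_{-1}v_0$ lies in a component of $F^+$ with more than $d'$ edges, in which case $\rho(F') < \rho(F^+) = \rho^*$, contradicting minimality of $\rho^*$; or $v_{-1}v_0$ lies in a component of $F^+$ with at most $d'$ edges, which is exactly the component $R_{i_0}$ containing $v_0$ (and $v_{-1}$, since $v_{-1}v_0$ is red and in $F$, hence $v_{-1}$ and $v_0$ are in the same red component). This gives $e(R_{i_0}) \le d'$.

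Next I would deduce $i_0 > 1$: since $R^* = R_1$ is a component with at least $d'+1$ edges and $R^*$ is still a component of $F^+$ (adding $v_{l-1}v_l$ can only merge components, and if it merged with $R^*$ the edge count would only grow, so in any case the component of $r$ in $F^+$ has more than $d'$ edges), while $R_{i_0}$ has at most $d'$ edges, we cannot have $i_0 = 1$. Actually I should be slightly careful here: $i_0 = i_\sigma(v_0)$ is the index in the \emph{original} legal order $\sigma$, and $R_{i_0}$ is the component of $v_0$ in $F$ — but point~5 identifies $R'_{i_0}$ as the component of $v_{-1}$ in $F'$, and the relevant bound $e(R_{i_0}) \le d'$ should be read for the component of $v_0$ in $F^+$, which since $v_0v_{-1}\in E(F)$ equals the component of $v_0$ in $F$ possibly enlarged by the merge along $v_{l-1}v_l$. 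I would check that in either reading the threshold argument goes through: if the component of $v_0$ in $F$ already has more than $d'$ edges then deleting $v_{-1}v_0$ from $F^+$ decreases $\rho$, contradiction; so $R_{i_0}$, the component of $v_0$ in $F$, has at most $d'$ edges, hence is not $R^*$, hence $i_0 \ne 1$, so $i_0 > 1$ and point~5 applies.

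Finally, for the conclusion $\mathcal T' \in \mathcal F^*$, point~5 already tells us $R^* = R'$ and $\mathcal T' \in \mathcal F$, so it remains to show $\rho(F') = \rho^*$. We have $\rho(F') \le \rho(F^+) = \rho^*$ from the edge-deletion monotonicity argument above, and $\rho(F') \ge \rho^*$ because $\rho^*$ is by definition the lexicographic minimum over all decompositions in $\mathcal F$ and $\mathcal T' \in \mathcal F$. Hence $\rho(F') = \rho^*$, i.e. $\mathcal T' \in \mathcal F^*$. The main obstacle I expect is making the edge-deletion monotonicity statement precise: I need to confirm that splitting a component of $F^+$ with $\le d'$ edges into two smaller pieces genuinely leaves every coordinate $\rho_i$ (for $i \ge d'+1$) unchanged — which is immediate since both pieces have fewer than $d'$ edges — and that when the split component has $> d'$ edges, the lexicographically-largest affected coordinate strictly decreases (the original component contributes to some $\rho_i$ with $i > d'$, and after the split neither piece contributes to that $\rho_i$ or any larger one, while both pieces might contribute to smaller coordinates, which is fine for lexicographic comparison). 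I would also double-check the edge case where adding $v_{l-1}v_l$ itself creates or enlarges a component past $d'$ — but since $v_{l-1}$ and $v_l$ are in different red components of $F$ (special path condition), $F^+$ is still a forest and its components are either unchanged or the single merge of two old components, so $\rho(F^+)$ is well-defined and the above reasoning is unaffected.
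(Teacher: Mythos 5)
Your proposal is correct and follows essentially the same route as the paper: compare $\rho(F') $ with $\rho(F+v_{l-1}v_l)=\rho^*$, note that deleting $v_{-1}v_0$ only splits one component, and use minimality of $\rho^*$ to force $e(R_{i_0})\le d'$, hence $i_0>1$ and, by the same comparison, $\rho(F')=\rho^*$. The extra care you take about the component of $v_0$ possibly being enlarged by the added edge $v_{l-1}v_l$ is a detail the paper glosses over, but it does not change the argument.
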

\begin{proof}
    We use the notation of Lemma~\ref{lemma:specialPaths}. We have that $R_{i_0}$ is split into two strictly smaller components in $F'$. If $\rho(F + xy) = \rho^*$ and $e(R_{i_0}) > d'$, then we had $\rho(F') < \rho^*$, a contradiction.
\end{proof}

The case treated in Lemma \ref{lemma:v0NotInR} still yields a contradiction since 5. of Lemma \ref{lemma:specialPaths} provides a smaller legal order than $\sigma^*$. This fact is captured in the following corollary.

\begin{corollary} \label{cor:specialPathInStandardCase}
    Let $\sigma = (R_1, \ldots, R_t)$ be a legal order for $\mathcal T \in \mathcal F$.
    If there is a special path $P = [v_0, \ldots, v_l]$ with respect to $\mathcal T$, $\sigma$ and $(v_{l-1}, v_l)$ such that $e(R_i) \leq e(R^*_i)$ for all $i \in \{1, \ldots, i_\sigma(v_0)\}$, then $\rho(\redForest(\mathcal T) + v_{l-1} v_l) > \rho^*$.
\end{corollary}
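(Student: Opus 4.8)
The statement we want to prove, Corollary~\ref{cor:specialPathInStandardCase}, says: if a special path $P$ exists with respect to $\mathcal T,\sigma,(v_{l-1},v_l)$ and $e(R_i)\le e(R^*_i)$ for all $i\le i_\sigma(v_0)$, then recolouring $v_{l-1}v_l$ red must strictly increase the residue function past $\rho^*$. The natural approach is a proof by contradiction combined with an appeal to Lemma~\ref{lemma:v0NotInR}. So suppose instead that $\rho(\redForest(\mathcal T)+v_{l-1}v_l)\le\rho^*$. Since $\rho^*$ is the lexicographic minimum over all decompositions in $\mathcal F$, and adding the edge $v_{l-1}v_l$ to $F$ merges two red components into one larger one (this is where I'd double check: $v_{l-1}$ and $v_l$ lie in different red components by the definition of a special path, so $F+v_{l-1}v_l$ is still a forest and exactly one component grows), we cannot have strict inequality $\rho(F+v_{l-1}v_l)<\rho^*$ — but wait, we might, since this $F+v_{l-1}v_l$ is not itself a valid decomposition (it has $k$ spanning trees plus a non-forest... no, it *is* a forest). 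Let me instead lean on the machinery as intended.

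**Key steps.** First, replace $P$ by a minimal special path $P'\le P$ with respect to $\mathcal T,\sigma,(v_{l-1},v_l)$; this exists by the remark following the definition of special paths, and minimality only moves $v_0$ to an ancestor or to a component with smaller $i_\sigma$-index, so $i_\sigma(v_0')\le i_\sigma(v_0)$ and hence the hypothesis $e(R_i)\le e(R^*_i)$ still holds for all $i\le i_\sigma(v_0')$. So without loss of generality $P$ itself is minimal. Second, apply Lemma~\ref{lemma:specialPaths} to $P$; we obtain a decomposition $\mathcal T'=(T'_1,\dots,T'_k,F')$ with $F'=(F+v_{l-1}v_l)-v_{-1}v_0$. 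Third — this is the crux — I want to show $\rho(F+v_{l-1}v_l)>\rho^*$ directly. Suppose not, i.e. $\rho(F+v_{l-1}v_l)=\rho^*$ (it cannot be $<\rho^*$ by minimality of $\rho^*$, since... hmm, again I must be careful: $F+v_{l-1}v_l$ together with $T_1,\dots,T_k$ is a genuine decomposition of $G$ into $k$ spanning trees plus a forest, so its residue function is $\ge\rho^*$; good). Then $\rho(F+v_{l-1}v_l)=\rho^*$, so Lemma~\ref{lemma:v0NotInR} applies and yields $e(R_{i_0})\le d'$, $i_0>1$, and $\mathcal T'\in\mathcal F^*$, plus (via point~5 of Lemma~\ref{lemma:specialPaths}) a legal order $\sigma'$ for $\mathcal T'$ with $R'_j=R_j$ for $j<i_0$ and $e(R'_{i_0})<e(R_{i_0})$. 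Fourth, derive the contradiction with minimality of $\sigma^*$: we compare $\sigma'$ against $\sigma^*$. For $j<i_0=i_\sigma(v_0)$ we have $e(R'_j)=e(R_j)\le e(R^*_j)$ by hypothesis; at position $i_0$ we have $e(R'_{i_0})<e(R_{i_0})\le e(R^*_{i_0})$. Hence $(e(R'_1),\dots)$ is lexicographically at most $(e(R^*_1),\dots)$, and in fact, since $\mathcal T'\in\mathcal F^*$, the minimality of $\sigma^*$ forces $\sigma^*\le\sigma'$; combined with the coordinatewise inequalities this pins down $e(R_j)=e(R^*_j)$ for $j<i_0$ and then the strict drop $e(R'_{i_0})<e(R^*_{i_0})$ gives $\sigma'<\sigma^*$ with $\mathcal T'\in\mathcal F^*$, contradicting the choice of $\sigma^*$ as a minimal legal order over decompositions in $\mathcal F^*$.

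**Main obstacle.** The delicate point is the bookkeeping in the last step: one has to be careful that $\sigma'$ really is a legal order for a decomposition in $\mathcal F^*$ and that the comparison $\sigma'<\sigma^*$ is valid even though $\sigma'$ is a legal order for $\mathcal T'$ while $\sigma^*$ is for $\mathcal T^*$ — but the ordering on legal orders in the excerpt is defined precisely to allow cross-decomposition comparison (extending by zeros if lengths differ), so this is legitimate. A second subtlety: the hypothesis is stated for the possibly-non-minimal $P$, so I must make sure the reduction to a minimal special path does not lose the hypothesis; as noted, minimality only decreases $i_\sigma(v_0)$, so the range $\{1,\dots,i_\sigma(v_0)\}$ only shrinks and the inequalities survive. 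Everything else is a routine assembly of Lemma~\ref{lemma:specialPaths} and Lemma~\ref{lemma:v0NotInR}.
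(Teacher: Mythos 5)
Your overall route is exactly the intended one (the paper leaves this corollary unproved precisely because it is this assembly): pass to a minimal special path $Q \leq P$, note $i_\sigma$ of its start is at most $i_\sigma(v_0)$ so the hypothesis survives, assume for contradiction $\rho(\redForest(\mathcal T)+v_{l-1}v_l)=\rho^*$, invoke Lemma~\ref{lemma:specialPaths}(5) via Lemma~\ref{lemma:v0NotInR} to get $\mathcal T'\in\mathcal F^*$ with a legal order $\sigma'$ agreeing with $\sigma$ below $i_0$ and strictly dropping at $i_0$, and then the coordinatewise inequalities $e(R_j)\le e(R^*_j)$ for $j\le i_0$ give $\sigma'<\sigma^*$, contradicting the minimal choice of $\sigma^*$ over $\mathcal F^*$. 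That part is correct.

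There is, however, one genuinely false step: your dismissal of the case $\rho(\redForest(\mathcal T)+v_{l-1}v_l)<\rho^*$ rests on the claim that $F+v_{l-1}v_l$ together with $T_1,\dots,T_k$ is ``a genuine decomposition of $G$ into $k$ spanning trees plus a forest.'' It is not: $(v_{l-1},v_l)$ is a blue arc of some $\blueTree(\mathcal T)$, so keeping it blue while also adding $v_{l-1}v_l$ to $F$ is no longer a partition of $E(G)$, and removing it from $T_b$ destroys the spanning tree, so the minimality of $\rho^*$ cannot be applied to this object directly. The needed inequality is nevertheless true for a different (elementary) reason, and you should argue it this way: since $v_{l-1}$ and $v_l$ lie in different red components, adding $v_{l-1}v_l$ to $F$ merges two components into one with strictly more edges, which can only increase the residue tuple lexicographically, so $\rho(F+v_{l-1}v_l)\ge\rho(F)\ge\rho^*$, the last inequality because $\mathcal T\in\mathcal F$ is itself a decomposition into $k$ spanning trees and a forest counted in the minimum defining $\rho^*$. (Alternatively, if $\rho(F+v_{l-1}v_l)<\rho^*$ one could apply Lemma~\ref{lemma:specialPaths} anyway and note $\rho(F')\le\rho(F+v_{l-1}v_l)<\rho^*$ for the genuine decomposition $\mathcal T'$, again contradicting minimality of $\rho^*$.) With that repair the proof is complete and matches the paper's intended argument.
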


Corollary \ref{cor:specialPathInStandardCase} shows how we will make use of special paths later. After doing some exchanges between the forests of $\mathcal T^*$ we will obtain a decomposition $\mathcal T$ and a special path as described in the corollary, but we will also have that $\rho(\redForest(\mathcal T) + v_{l-1} v_l) = \rho^*$.
Next we want to look at what consequences Lemma \ref{lemma:specialPaths} has on the relation of child and parent components:

\begin{lemma}[Corollary 2.5 from \cite{ndtt}] \label{lemma:sumOfChildRelation} \phantom{bla} \\ 
    Let $C$ be a child of $K$ that is generated by $(x, y)$. Then $e(K) + e(C) \geq d'$. 
\end{lemma}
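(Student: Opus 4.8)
The plan is to derive this directly from Lemma~\ref{lemma:specialPaths} together with the minimality of $\sigma^*$, exactly in the spirit of Corollary~2.5 of \cite{ndtt}. Suppose for contradiction that $C$ is a child of $K$ generated by the blue arc $(x,y)$ with $x \in V(K)$, $y \in V(C)$, and $e(K) + e(C) < d'$. The idea is that colouring $xy$ red merges $K$ and $C$ into a single red component with $e(K) + e(C) + 1 \le d'$ edges, so this recolouring does not create any new component of size $> d'$; in fact it strictly decreases the residue function's tail only if $K$ or $C$ had more than $d'$ edges, which they don't, so $\rho(F + xy) = \rho^*$ (the multiset of component sizes above $d'$ is unchanged).

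First I would argue that $(x,y)$, viewed as the last edge of a length-one path, or more carefully an appropriate extension of it, is a special path. The subtlety is the definition of special path: we need a path $P = [v_0, \dots, v_l]$ with $v_{l-1} = x$, $v_l = y$, with $v_{l-1}, v_l$ in different red components (true, since $x \in V(K)$, $y \in V(C)$ and $K \ne C$), with $i_{\sigma^*}(v_l) > i_{\sigma^*}(v_0)$, and with the ancestor condition. Since $C$ is a child of $K$ we have $i_{\sigma^*}(C) > i_{\sigma^*}(K)$; taking $v_0 = x$ (so the path is just the single arc $[x,y]$) gives $i_{\sigma^*}(v_0) = i_{\sigma^*}(K) < i_{\sigma^*}(C) = i_{\sigma^*}(v_l)$, and the ancestor condition is vacuous because $v_0 = v_{l-1}$ lie in the same component trivially with $v_0$ its own "ancestor" — but if the definition requires a proper path or a genuinely smaller $v_0$, I would instead follow a blue directed path in $K$ from $x$ back toward wherever it is needed; in any case a \emph{minimal} special path $P$ with respect to $\mathcal T^*$, $\sigma^*$ and $(x,y)$ exists (take any special path and shrink it), and its starting vertex $v_0$ satisfies $i_0 := i_{\sigma^*}(v_0) \le i_{\sigma^*}(x) = i_{\sigma^*}(K)$.

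Now I would apply Lemma~\ref{lemma:specialPaths} to this minimal special path. Since $\rho(F + xy) = \rho^*$, Lemma~\ref{lemma:v0NotInR} applies and gives $i_0 > 1$, so part~5 of Lemma~\ref{lemma:specialPaths} yields a decomposition $\mathcal T' \in \mathcal F$ (in fact $\mathcal T' \in \mathcal F^*$ by Lemma~\ref{lemma:v0NotInR}) and a legal order $\sigma'$ for $\mathcal T'$ with $R'_j = R_j$ for all $j < i_0$ and $e(R'_{i_0}) < e(R_{i_0}) = e(R^*_{i_0})$. Hence $\sigma' < \sigma^*$ with $\mathcal T' \in \mathcal F^*$, contradicting the choice of $\sigma^*$ as the minimal legal order over all decompositions in $\mathcal F^*$. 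Therefore $e(K) + e(C) \ge d'$.

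The main obstacle I anticipate is purely definitional bookkeeping: verifying that the single blue arc $(x,y)$ (or a suitable minimal blue path ending in it) genuinely meets every clause of the "special path" definition — in particular the clause forcing $v_0$ to be an ancestor of $v_{l-1}$ in $Aux(\mathcal T^*,\sigma^*)$ when they share a red component. One must check that the witnessing arc structure of the legal order makes $x$ reachable from an appropriate ancestor inside $K$, or argue that a minimal special path automatically has this property. Once the special path is in hand, the rest is a direct invocation of Lemmas~\ref{lemma:specialPaths} and~\ref{lemma:v0NotInR} and the minimality of $\sigma^*$, so there is no real computation — the whole content is confirming $\rho(F+xy) = \rho^*$, which is immediate from $e(K) + e(C) + 1 \le d'$.
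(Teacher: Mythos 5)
Your proof is correct and follows essentially the same route as the paper: assume $e(K)+e(C) < d'$, observe that $[x,y]$ is a special path with $i_{\sigma^*}(y) > i_{\sigma^*}(x)$ and that $\rho(\redForest(\mathcal T^*) + xy) = \rho^*$, then contradict the minimality of $\sigma^*$ over $\mathcal F^*$. The only cosmetic difference is that the paper invokes Corollary \ref{cor:specialPathInStandardCase} directly, whereas you unroll that corollary by hand via Lemmas \ref{lemma:specialPaths} and \ref{lemma:v0NotInR}.
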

\begin{proof}
    Suppose to the contrary that $e(K) + e(C) < d'$. Since $i_{\sigma^*}(y) > i_{\sigma^*}(x)$, we have that $[x, y]$ is a special path with respect to $\mathcal T^*$, $\sigma^*$ and $(x, y)$. Furthermore, $\rho(\redForest(\mathcal T^*) + xy) = \rho^*$, but this contradicts Corollary \ref{cor:specialPathInStandardCase}.
\end{proof}

\begin{lemma} \label{lemma:smallNoSmallChildren}
    Let $K$ be a small red component of $\explSG$. Then $K$ does not have small children and thus, every small red component of $\explSG$ has a parent component that is not small.
\end{lemma}
\begin{proof}
    Let $C$ be a small red component of $\explSG$. If $C$ does not have a parent component, then $C = R^*$, which is a contradiction since $e(R^*) \geq d' + 1 > \ell$ by Observation \ref{obs:ellSmallerDensity}. Thus, let $K$ be the parent of $C$. By Lemma \ref{lemma:sumOfChildRelation}, we have that $e(K) \geq d' - e(C) \geq d ' - \ell \geq \ell + 1$ by Observation \ref{obs:ellSmallerDensity} and hence, $K$ is not small.
\end{proof}

\section{\boldmath\texorpdfstring{$R^*$}{R*} Does Not Have Small Children} \label{sec:exchangeEdges}

In this section, we define a useful exchange operation and show how to reorient edges after the exchange to maintain the proper structure of the decomposition. This operation has already been used in previous Nine Dragon Tree papers, starting with \cite{ndtt}. We then use this exchange operation to show that $R^*$ does not have small children. For this section we define $\mathcal T \in \mathcal F$ together with a legal order $\sigma$ of $\mathcal T$.

\begin{definition} \label{def:edgeExchange}
    Let $e \in E(\blueTree(\mathcal T))$ for some $b \inOneToK$ and $e' \in E(\redForest(\mathcal T))$. If $(\blueTree(\mathcal T) - e) + e'$ is a spanning tree and $(F - e') + e$ is a forest (ignoring orientations), we say that $e'$ can be exchanged with $e$, and say that $e \leftrightarrow e'$ holds in $\mathcal T$.
\end{definition}

The next lemma is obvious and we omit the proof, but it usefully characterizes when $e \leftrightarrow e'$. However, we refer the reader to Figure \ref{fig:swapDef} for an illustration.

\begin{lemma} \label{lemma:exchange}
    Let $u \in V(G) - r$, $u'$ be the parent vertex of $u$ in $\blueTree(\mathcal T)$ for some $b \inOneToK$ and $e = v_1 v_2 \in E(\redForest(\mathcal T))$ such that $(\redForest(\mathcal T) + uu') - e$ does not contain a cycle.\\
    Then, the following are equivalent:
    \begin{enumerate}[(a)]
        \item $(u, u') \leftrightarrow e$ holds in $\mathcal T$.
        \item The edge $(u,u')$ lies in the unique cycle (ignoring orientations) of $\blueTree(\mathcal T) + e$.
        \item Up to relabelling $v_1$ as $v_2$, $v_1$ is a descendant of $u$ in $\blueTree(\mathcal T)$ and $v_2$ is not.
    \end{enumerate}
    Furthermore, if these conditions are met, then after exchanging $(u, u')$ and $e$ between $\blueTree(\mathcal T)$ and $\redForest(\mathcal T)$, orienting $e$ towards $v_2$, removing the orientation of $e$ and reorienting the path $\pathInBlueTree{\mathcal T}{v_1}{u}$, the resulting decomposition is again in $\mathcal F$, and we say we obtain the resulting decomposition from $\mathcal T$ by performing $(u, u') \leftrightarrow e$.
\end{lemma}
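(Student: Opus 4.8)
The plan is to prove the equivalence $(a)\Leftrightarrow(b)\Leftrightarrow(c)$ by a cycle-space argument, and then verify the ``furthermore'' part by tracking orientations. Throughout I would work with the undirected versions of $\blueTree(\mathcal T)$ and $\redForest(\mathcal T)$, since the stated conditions are about spanning trees and forests ``ignoring orientations.'' The standing hypothesis $(\redForest(\mathcal T) + uu') - e$ acyclic is exactly the condition that makes $(F - e') + e$ a forest in Definition \ref{def:edgeExchange}, so the only content in $(a)$ beyond the hypothesis is that $(\blueTree(\mathcal T) - (u,u')) + e$ is again a spanning tree.

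For $(a)\Leftrightarrow(b)$: since $\blueTree(\mathcal T)$ is a spanning tree, $\blueTree(\mathcal T) + e$ contains a unique cycle $Z$ (the fundamental cycle of $e$), and $(\blueTree(\mathcal T) - f) + e$ is a spanning tree if and only if $f \in E(Z)$. Applying this with $f = (u,u')$ gives the equivalence immediately. For $(b)\Leftrightarrow(c)$: the fundamental cycle $Z$ of $e = v_1 v_2$ consists of $e$ together with the unique tree-path $P^{\blueTree(\mathcal T)}(v_1, v_2)$. The edge $u u'$ with $u'$ the parent of $u$ lies on this tree-path precisely when exactly one of $v_1, v_2$ is a descendant of $u$ (equivalently, $u u'$ separates $v_1$ from $v_2$ in $\blueTree(\mathcal T)$): if both or neither are descendants of $u$, the path $P^{\blueTree(\mathcal T)}(v_1,v_2)$ avoids $uu'$, and if exactly one is a descendant of $u$, the path must cross the edge $uu'$. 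This is the routine ``an edge of a tree lies on a path iff the path's endpoints are on opposite sides of the edge'' observation, and after relabelling we may assume $v_1$ is the descendant, which is the statement of $(c)$.

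For the ``furthermore'' part, assume the equivalent conditions hold. After removing $(u,u')$ from $\blueTree(\mathcal T)$ and adding $e$, we get a spanning tree of $G$; I want to re-root it at $r$ with all arcs pointing toward parents. Orienting $e$ toward $v_2$ and reversing the path $P^{\blueTree(\mathcal T)}(v_1, u)$ accomplishes this: the subtree hanging below $u$ (containing $v_1$) had all arcs pointing up toward $u$ and then $u \to u'$ toward $r$; now that $uu'$ is gone, this subtree must re-orient so everything flows toward $v_1$, out along $e$ to $v_2$, and then unchanged toward $r$ through the rest of the old tree. Concretely, every vertex keeps its parent except those on $P^{\blueTree(\mathcal T)}(v_1,u)$, whose parent pointers get reversed, and $v_1$ whose new parent is $v_2$. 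One checks this yields a valid directed spanning tree rooted at $r$; meanwhile $e$ becomes undirected in the red forest and $(F - e) + uu'$ is a forest by hypothesis, so the new decomposition lies in $\mathcal F$.

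The main obstacle — though it is more bookkeeping than genuine difficulty — is the orientation verification in the ``furthermore'' part: one has to be careful that $u$ is indeed a descendant of $v_1$ after reversal, that no vertex ends up with two parents or with its arc pointing away from $r$, and that the vertices of $P^{\blueTree(\mathcal T)}(v_1,u)$ are exactly the ones affected. Since $v_1$ is a descendant of $u$ in $\blueTree(\mathcal T)$, the path $P^{\blueTree(\mathcal T)}(v_1,u)$ is well-defined and directed from $v_1$ up to $u$; reversing it makes $u$ a descendant of $v_1$, and every other vertex's path to $r$ in the new tree is unchanged or routes through $v_2$ then along the old tree, so acyclicity and the root property are preserved. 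As the paper itself notes, this lemma is ``obvious,'' so I would keep this verification brief.
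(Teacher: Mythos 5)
Your argument is correct and is precisely the standard fundamental-cycle exchange plus re-rooting argument the paper has in mind; the paper itself omits the proof of Lemma \ref{lemma:exchange}, declaring it obvious. The only caveat is that membership in $\mathcal F$ literally also requires $R^*$ to remain a component of the red forest, which can fail when $e \in E(R^*)$ (as in the application inside the proof of Lemma \ref{lemma:rootNoChildren}); this is an imprecision in the lemma's statement rather than a gap in your proof, which establishes the substantive spanning-tree, forest and orientation claims.
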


\doubleFigureCentered{
    \swapDefBefore
}{
    \swapDefAfter
}{
    An example where we have $(u_1, u'_1) \leftrightarrow v_1 v'_1$ and $(u_2, u'_2) \leftrightarrow v_2 v'_2$.
}{
    \label{fig:swapDef}
}

\begin{lemma} \label{lemma:rootNoChildren}
    The component $R^*$ does not have small children.
\end{lemma}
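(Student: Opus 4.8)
The plan is to argue by contradiction: suppose $R^*$ has a small child $C$, generated by some blue arc $(x,y)$ with $x \in V(R^*)$ and $y \in V(C)$. By Lemma \ref{lemma:sumOfChildRelation} (applied with $K = R^*$) we have $e(R^*) + e(C) \geq d'$, and since $e(R^*) \geq d'+1$ this is no immediate contradiction; the point is rather to use the exchange machinery of Lemma \ref{lemma:exchange} to redistribute edges between $R^*$ and the blue trees so as to either lower $\rho$ below $\rho^*$ or produce a legal order smaller than $\sigma^*$. The key leverage is Lemma \ref{lemma:rExistence}: the root $r$ was chosen in the ``center'' of $R^*$ so that deleting any edge of $R^*$ incident to $r$ leaves $r$ in a component with at least $\ell+1$ edges. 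Since $C$ is small, $e(C) \leq \ell$, so $C$ is ``lighter'' than the piece of $R^*$ near $r$, and this asymmetry is what we exploit.

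The main steps, in order: \textbf{(1)} First I would locate a blue arc $(u,u')$ (in some tree $T_b$) such that $u$ is a descendant of $y$ in $T_b$ and $u'$ is not, so that by Lemma \ref{lemma:exchange}(c) we have $(u,u') \leftrightarrow e$ for a suitable red edge $e$; the natural first choice is to take the witnessing arc itself, i.e.\ perform an exchange that colours $(x,y)$ red (merging $C$ into $R^*$ across that arc) while colouring some edge $e'$ of the enlarged red component blue. \textbf{(2)} I would then choose $e'$ to be an edge of $R^*$ incident to $r$, chosen so that recolouring it blue splits off from $R^*$ a component $R'$ containing $r$; by Lemma \ref{lemma:rExistence} the part with $r$ has at least $\ell+1 > e(C)$ edges, and more importantly $R'$ together with the merged copy of $C$ has fewer edges than the original $R^*$ (because we removed an edge incident to $r$ and added only the $e(C)$ edges of $C$ plus the connecting edge, and $e(C)+1 \le \ell+1$ is less than what was removed once one accounts correctly — this is the arithmetic I would need to pin down). \textbf{(3)} Using Lemma \ref{lemma:exchange} I would reorient the trees so the resulting decomposition lies in $\mathcal F$, then check that $\rho$ has not increased, so the new decomposition is in $\mathcal F^*$; finally I would exhibit a legal order for it whose first entry that differs from $\sigma^*$ has strictly smaller value, contradicting the minimality of $\sigma^*$. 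Alternatively, if the residue strictly drops, that already contradicts minimality of $\rho^*$.

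\textbf{Main obstacle.} The delicate point is Step (2)–(3): after performing $(u,u') \leftrightarrow e'$, the red component of $r$ in the new forest is determined by the \emph{blue} tree structure (which edge $e'$ we are allowed to recolour is governed by Lemma \ref{lemma:exchange}(c), i.e.\ by descendant relations in $\blueTree(\mathcal T)$), not freely by us; so I must argue that among the admissible choices of $e'$ there is one that \emph{both} is incident to $r$ (or at least splits off a piece containing $r$ that is strictly smaller than the old $R^*$ once $C$ is absorbed) \emph{and} keeps the earlier part of the legal order unchanged so that the comparison $\sigma' < \sigma^*$ is genuinely at position $i_{\sigma^*}(R^*) = 1$ or later. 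Controlling exactly which red edge becomes blue, and verifying that the absorbed $C$ really does make the new $r$-component smaller than $e(R^*)$ (here $e(R^*) \ge d'+1 \ge 2\ell+2$ by Observation \ref{obs:twoEllPlusOne}, while $C$ contributes at most $\ell+1$ edges including the connector), is where the real work lies; the reorientation bookkeeping from Lemma \ref{lemma:exchange} is routine by comparison.
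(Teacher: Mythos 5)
Your overall strategy (exchange the witnessing arc into red, push a red edge of $R^*$ into blue, and contradict the minimality of $\rho^*$ via the central choice of $r$) is the right one, and your ``alternative'' ending -- that a strict drop in the residue already suffices, with no legal-order comparison needed -- is in fact exactly how the paper closes the argument. But the step you flag as the main obstacle is a genuine gap, not a routine verification, and your proposed resolution would fail. You want the recoloured red edge $e'$ to be incident to $r$; by Lemma \ref{lemma:exchange}(c) this would force the neighbour of $r$ on $e'$ to be a descendant of $x$ in the relevant blue tree $T_b$, and there is simply no reason such an edge exists. The paper's key construction, which is what is missing from your sketch, is to take the \emph{red} path $P=[x_1,\dots,x_n]$ from $x$ to $r$ inside $R^*$ and let $x_i$ be the first vertex of $P$ that is \emph{not} a blue descendant of $x$ (it exists because $r$ is not a descendant of $x$, and $i>1$). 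The edge $x_{i-1}x_i$ is then automatically exchangeable with $(x,x')$ by Lemma \ref{lemma:exchange}(c), and although it need not be incident to $r$, the component $K_r$ of $r$ after the exchange still has at least $\ell+1$ edges: it contains the component of $r$ in $R^*-e$ for the edge $e$ of $P$ incident to $r$, so Lemma \ref{lemma:rExistence} applies. That is how the tension between ``admissible for the exchange'' and ``leaves a large piece at $r$'' is resolved.

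A second, smaller error: your accounting in step (2) puts $C$ on the wrong side. After performing $(x,x')\leftrightarrow x_{i-1}x_i$, the child $C$ is glued (via the new red edge $xx'$) to the component $K_x$ containing $x$, not to the component of $r$. The correct arithmetic is $e(K_x)\le e(R^*)-1-e(K_r)+1+e(C)<e(R^*)$, using $e(K_r)\ge \ell+1>\ell\ge e(C)$, while $K_r$ is a proper subgraph of $R^*$ and hence also has fewer than $e(R^*)$ edges. Since $e(R^*)>d'$, splitting $R^*$ into two strictly smaller components strictly decreases $\rho$, contradicting the minimality of $\rho^*$; no preservation of the earlier legal order is required, so the second half of your ``main obstacle'' dissolves once the right edge is chosen.
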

\begin{proof}
    Suppose to the contrary that $R^*$ has a small child $C$ generated by $(x, x') \in E(\blueTree(\mathcal T^*))$ for some $b \inOneToK$.
    Let $P = [x_1, \ldots, x_n]$ be the path from $x$ to $r$ in $\redForest(\mathcal T^*)$. Let $i \in \{1, \ldots, n\}$ such that $x_i$ is the first vertex on $P$ that is not a descendant of $x$ in $\blueTree(\mathcal T^*)$. This vertex exists and $i > 1$ since $x$ is a descendant of $x$ in $\blueTree(\mathcal T^*)$ and $r$ is not. We obtain $\mathcal T$ from $\mathcal T^*$ by performing $(x, x') \leftrightarrow x_{i-1} x_i$. By Lemma \ref{lemma:exchange} we have that $\mathcal T \in \mathcal F$.
    The component $K_r$ of $r$ in $\redForest(\mathcal T)$ contains at least $\ell + 1$ edges by the definition of $r$. For the component $K_x$ of $x$ in $\redForest(\mathcal T)$ we have 
    \[
    e(K_x) 
    \leq e(R^*) - |\{x_{i-1} x_i\}| - e(K_r) + |\{xx'\}| + e(C)
    < e(R^*).
    \]
    Since $K_r$ is a proper subgraph of $R^*$, we have that $\rho(\redForest(\mathcal T)) < \rho^*$, which is a contradiction.
\end{proof}

\section{Non-small Components Do Not Have Many Small Children} \label{sec:boundNumSmallChildren}

In this section we fix $b \inOneToK$ and a non-small red component $K \neq R^*$ of $\explSG$. Our goal is to show that $K$ has at most $\ell + 1$ small children generated by $\blueTree(\mathcal T^*)$. 
In fact, we will show a slightly stronger statement, which is Lemma \ref{lemma:numSmallChildren}.
In order to show this, we combine the two strategies of the previous two sections. Let us give some intuition for the approach before delving into the technical details. First, we perform edge exchanges as defined in Definition \ref{def:edgeExchange}. 
These exchanges will cut off a part of $K$ and link it with a small child. After we have done this a sufficient number of times, every part will be linked to at most one child, which will guarantee us that no part will have more $e(K)$ edges. If the exchanges have not messed up the legal order before $K$, then we will have arranged that the component of $w(K)$ has less than $e(K)$ edges, which immediately gives us a smaller legal order than $\sigma^*$. If the legal order is jumbled by the exchanges, we will have arranged that there is an spanning-tree arc to a small child of $K$, which we have not exchanged and which is the end of a minimal special path whose start is so far back in the legal order where it is still equal to $\sigma^*$. If we augment this minimal special path we will retrieve a smaller legal order than $\sigma^*$ as we previously did. We will later call the second to last vertex of this path (which is in $K$ and has an arc to a small child) a special vertex. More specifically, each intermediate decomposition will have a such a vertex, even if we are not aiming to perform a special path augmentation yet, that we call a special vertex (however, it might change after an exchange operation and pass its ``special'' properties to another vertex).

Another challenge lies in not accidentally reorienting generating edges while performing edge exchanges. Since a blue directed path ending at a ``generating vertex'' (i.e.\ a tail of a generating edge) is reoriented by the exchange operation introduced in Lemma \ref{lemma:exchange}, we aim to make an edge exchange with a generating vertex that does not have another generating vertex as a descendant. In Lemma \ref{lemma:mainAugmentation} this vertex will be called $x$ or it will be the special vertex $v_*$. However, for technical reasons we have to take another vertex (which will be called $y$) for the exchange, which also has the nice property of not reorienting generating edges. When we are forced to perform an edge exchange with $v_*$, we will arrange that after the exchange there is a blue path from $v_*$ to another generating vertex, which will then meet all the criteria for being a special vertex in the resulting decomposition.
When it comes to not reorienting generating edges, we will actually make an exception: during the phase of edge exchanges operations our goal is to decrease the size of the largest component that stems from $K$. In the worst case we can only split one edge from it (i.e.\ the edge that is exchanged with a generating edge). Thus, if we split off a generating vertex that is not $v_*$ from the largest component together with at least another vertex, then we can allow ourselves to not consider this vertex for future operations, since more progress than expected has already been made, and thus, we may also reorient it. This is covered by 5) in the first subsection.\\
In the first subsection of this section we describe which properties intermediate decompositions before the minimal special path augmentation should have.
Of course, these properties also hold for $\mathcal T^*$.
In the second subsection we then fully describe and prove the augmentation steps we just encountered.

\subsection{Definition of Valid Intermediate States}

\begin{notation} \label{notation:primeIsParent}
   Fix a positive integer $\ell' \leq \ell$. By $\X$ we denote the set of vertices $x$ such that there is a small child of $K$ having at most $\ell'$ edges that is generated by $(x, x') \in E(\blueTree(\mathcal T^*))$.
    If not described otherwise, $x'$ denotes the parent of $x \in \X$ in $\blueTree(\mathcal T^*)$.
    Let $\mathcal C(K)$ denote the set of small children of $K$. For $\ell' \inZeroToEll$ let $\mathcalC$ denote the number of small children of $K$ having at most $\ell'$ edges.
\end{notation}
The following definition will help us describing which vertices may be reoriented by performing edge exchanges. 
This might not be clear straightaway, but we will give an intuition afterwards.

\begin{definition} \label{def:subtreeOfGeneratingVertex}
    Let $\mathcal T \in \mathcal F$ and $U \subseteq V(K)$. Furthermore, for $v \in V(G)$ let $\Delta(v)$ denote the set of descendants of $v$ in $\blueTree(\mathcal T)$. For every $u \in U$ let $T_u(U, \mathcal T)$ be the subtree of $\blueTree(\mathcal T)$ with root $u$ and vertex set $\Delta(u) \setminus \bigParan{\bigcup_{u' \in U \cap \Delta(u)} \Delta(u')}$.
    Let $\I := \min_{x \in \X} \iSigStar(T_x(\X, \mathcal T^*))$.
\end{definition}

Note that in Definition \ref{def:subtreeOfGeneratingVertex} we have that $V(T_u(U, \mathcal T))$ and $V(T_{u'}(U, \mathcal T))$ are disjoint for any distinct vertices $u, u' \in U$. Later, we will use this definition with $U \subseteq \X$ and our setup will guarantee that when performing $(u, u') \leftrightarrow e$, where $u \in U$, then only blue arcs of $T_u(U, \mathcal T)$ will be reoriented and in particular, no other ``child-generating'' arcs will be reoriented whose tails are in $U$.\\
Let $\ell' \inZeroToEll$, $\mathcal T \in \mathcal F$ and $a \in \X$. In the rest of this subsection we enumerate nine conditions $1), \ldots, 9)$ which all need to hold such that we may call $(\mathcal T, a)$ a valid intermediate state for $K$ and $\ell'$. For better readability we will sometimes use the notation $\mathcal T_a := (\mathcal T, a)$.

Let $\mathcal L(\mathcal T)$ be the set of components of $\redForest(\mathcal T)[V(K)]$ and for every $L \in \mathcal L(\mathcal T)$ let $\bar L$ be the component of $\redForest(\mathcal T)$ containing $L$. 
\begin{enumerate}[1)]
    \item All components of $\redForest(\mathcal T^*)$ that are not from $\mathcalC \cup \{K\}$ are also components of $\redForest(\mathcal T)$, $\bigcup_{C \in \mathcalC} E(C) \subseteq E(\redForest(\mathcal T))$ and the remaining edges of $\redForest(\mathcal T)$ form a subset of $E(K) \cup \generatingEdgesInRed$.
    \item[2a)] If $\ISmallerISigK$, then there is a legal order $(R_1, \ldots, R_t)$ for $\mathcal T$ such that $R_i = R^*_i$ for all $i \in \{0, \ldots, \I\}$.
    \item[2b)] If $\IEqISigK$, then there is a legal order $(R_1, \ldots, R_t)$ for $\mathcal T$ such that $R_i = R^*_i$ for all $i \in \{0, \ldots, \iSigStar(K) - 1\}$ and the witnessing arc of $K$ also is a blue arc in the same orientation as in $\mathcal T^*$ in $E(\mathcal T)$.
    \addtocounter{enumi}{1}
    \item $\min_{x \in \X} \iSigStar(T_x(\X, \mathcal T)) \geq \I$.
    \item For all $L \in \mathcal L(\mathcal T)$, $\bar L$  contains at most one edge of $\generatingEdgesInRed$.
\end{enumerate}
Note that at most one component of $\mathcal L(\mathcal T)$ has at least $\maxOfDAndEOfK - \ell'$ edges (or otherwise 
$e(K) \geq 2(\maxOfDAndEOfK - \ell') + 1 > \maxOfDAndEOfK$ by Observation \ref{obs:twoEllPlusOne}). We denote this component by $K'(\mathcal T)$ if it exists and otherwise we let $K'(\mathcal T)$ be the subgraph of $K$ not containing any vertices, denoted by $\varnothing$. 
\begin{enumerate}[resume*]
    \item For all $x \in \X \cap V(K'(\mathcal T))$, we have that $(x', x) \notin E(\blueTree(\mathcal T))$.
\end{enumerate}
Let $S(\mathcal T)$ be the set of vertices $s$ such that $s \in \X \cap V(K'(\mathcal T))$ and $(s, s') \in E(\blueTree(\mathcal T))$.\\

In the following we will define special vertices. They are linked to special paths in the following way: our augmentations will finally resolve in a valid intermediate state, which might have a special vertex $v_*$. If so, we will do a special path augmentation in the proof of Lemma \ref{lemma:existenceValidColouringForAugment} where the minimal special path ends with $(v_*, v'_*)$. By the definition of special vertices this minimal special path will start relatively close to $R^*$ with respect to $\sigma^*$. As all our other changes on the decomposition will be later in the legal order, the special path augmentation will provide a smaller legal order than $\sigma^*$, which will be a contradiction.

\begin{definition}
    Let $v_* \in \X$ and define $\bar S_{v_*} := S(\mathcal T) + v_*$.
    \begin{itemize}
    \item{If $\ISmallerISigK$, we call $v_* \in \X$ \textit{special for $\mathcal T$} if $(v_*, v'_*) \in E(\blueTree(\mathcal T))$ and 
    $\iSigStar(T_{v_*}(\bar S_{v_*}, \mathcal T)) = \I$.}
    \item{If $\IEqISigK$, we call $v_* \in \X$ \textit{special for $\mathcal T$} if $(v_*, v'_*) \in E(\blueTree(\mathcal T))$ and $w(K) \in V(T_{v_*}(\bar S_{v_*}, \mathcal T))$.}
    \end{itemize}
\end{definition}

\begin{enumerate}[resume*]
    \item[6a)] If there is a special vertex for $\mathcal T$, then $a$ is a special vertex. In this case let $v_* := a$.
    \item[6b)] If there is no special vertex for $\mathcal T$, then $\IEqISigK$ and $a = w(K)$. 
    \addtocounter{enumi}{1}
    \item Let $L \in \mathcal L(\mathcal T)$ contain $a$. Then $\bar L$ does not contain an edge of $\generatingEdgesInRed$ and thus, $\bar L = L$.
\end{enumerate}
Note that some of the notations defined in the following paragraphs are illustrated in Figure \ref{fig:defIntValidStates}.
If $a$ is a special vertex for $\mathcal T$, let $\bar S(\mathcal T_a) := S(\mathcal T) + v_*$ and $\mathring S(\mathcal T_a) := S(\mathcal T) - v_*$.\\
If there is no special vertex for $\mathcal T$, let $\bar S(\mathcal T_a) := \mathring S(\mathcal T_a) := S(\mathcal T)$.\\
If there is a vertex $t \in \X \cap V(K'(\mathcal T))$ such that $tt' \in E(\redForest(\mathcal T))$, then we say that \textit{$t$ extends $K'(\mathcal T)$ in $\mathcal T$}. Note that if 4) and 7) hold for $\mathcal T_a$, then there can be at most one such vertex and this vertex is not $a$. If there is a vertex extending $K'(\mathcal T)$ in $\mathcal T$, we let $u(\mathcal T_a)$ be this vertex, and otherwise we let $u(\mathcal T_a) := a$. Note that if 4) and 5) hold for $\mathcal T_a$, $u(\mathcal T_a)$ always exists and is unique. 
\begin{enumerate}[resume*]
    \item If $K'(\mathcal T) \neq \varnothing$, then $u(\mathcal T_a) \in V(K'(\mathcal T))$.
    \item If $u(\mathcal T_a)$ extends $K'(\mathcal T)$ in $\mathcal T$, then $\pathInBlueTree{\mathcal T}{u(\mathcal T_a)}{r}$ either does not contain a vertex of $\bar S(\mathcal T_a)$ or the first vertex on this path which is in $\bar S(\mathcal T_a)$ is $v_*$.
\end{enumerate}

\singleFigure{\defIntValidStates}{
    A part of a valid intermediate state $\mathcal T_a$ where only the changes to components involving $K$ are shown, such that we have the following properties:
    \begin{itemize}
        \item $V(K) = V(K'(\mathcal T)) \; \dot\cup \; V(\tilde K)$.
        \item $X_2(K) = \{x_1, \ldots, x_5, u(\mathcal T_a), v_*\}$.
        \item $S(\mathcal T_a) = \{x_1, x_2, x_3, u(\mathcal T_a)\} = \mathring S(\mathcal T_a)$.
        \item $\bar S(\mathcal T_a) = \{x_1, x_2, x_3, u(\mathcal T_a), v_*\}$.
        \item $u(\mathcal T_a)$ extends $K'(\mathcal T)$ in $\mathcal T$.
    \end{itemize}
}{\label{fig:defIntValidStates}}

If conditions $1), \ldots, 9)$ are met, we call $\mathcal T_a$ a \textit{valid intermediate state for $K$ and $\ell'$} and denote the set of all valid intermediate states for $K$ and $\ell'$ by $\setOfValidDecomps$. It is routine to check that $\setOfValidDecomps$ is not empty: we state this as an observation.
\begin{obs} \label{obs:TStarValidDecomp}
    If $\ISmallerISigK$, let $a^* \in \X$ be a vertex minimizing $\iSigStar(T_x(\X, \mathcal T^*))$. If $\IEqISigK$, let $a^*$ be a special vertex for $\mathcal T^*$ if one exists, and otherwise let $a^* = w(K)$.\\
    Then $(\mathcal T^*, a^*) \in \setOfValidDecomps$ and thus, $\setOfValidDecomps \neq \varnothing$.
\end{obs}
In the rest of the section, if we say $i)$ holds for $(\mathcal T, a)$, $i)$ always refers to the respective item in this subsection. Note that for $i \in \{1, \ldots, 5\}$ we might just say that $i)$ holds for $\mathcal T$ since in these cases,  $i)$ does not depend on $a$.

\subsection{Augmenting Valid Intermediate States}

In Lemma \ref{lemma:mainAugmentation} we will show that we can perform an edge exchange that gives us a ``better'' valid intermediate state. Before we turn to this lemma, we will show that after any obvious edge exchange, 1), 2a), 2b) and 3) will still hold in the new decomposition.

\begin{lemma} \label{lemma:legalOrderInValidDecomp}
    Let $\ell \inZeroToEll$, $(\mathcal T, a) \in \setOfValidDecomps$ and $x \in \bar S(\mathcal T_a)$. Furthermore, let $v$ be a descendant of $x$ in $\blueTree(\mathcal T)$ and $v'$ not be a descendant of $x$ such that $vv' \in E(K) \cap E(\redForest(\mathcal T))$. Then 1), 2a), 2b) and 3) hold for the decomposition $\mathcal T'$ that can be obtained from $\mathcal T$ by performing $(x,x') \leftrightarrow vv'$.
\end{lemma}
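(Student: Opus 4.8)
The plan is to follow the exchange $(x,x')\leftrightarrow vv'$ explicitly and verify 1) and 2) by hand; 1) is pure bookkeeping on the red edge set, while the real content sits in re-establishing the initial segment of a legal order demanded by 2). First I would justify the exchange via Lemma~\ref{lemma:exchange} with $(u,u')=(x,x')$, $v_1=v$, $v_2=v'$. Here $v,v'\in V(K)$ since $vv'\in E(K)$, while $x\in\bar S(\mathcal T)\subseteq V(K)$ and $x'$ lies in the small child $C\in\mathcalC$ that $(x,x')$ generates in $\mathcal T^*$; the hypothesis of Lemma~\ref{lemma:exchange} that $(\redForest(\mathcal T)+xx')-vv'$ is a forest then holds because, by 1) and 3), the red component of $\redForest(\mathcal T)$ meeting $V(K)$ at $x$ carries at most one edge of $\generatingEdgesInRed$, and since distinct vertices of $\X$ generate distinct children this edge cannot reach $C$, so $x$ and $x'$ lie in different red components of $\redForest(\mathcal T)$. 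As $v$ is a descendant of $x$ in $\blueTree(\mathcal T)$ and $v'$ is not, part (c) of Lemma~\ref{lemma:exchange} applies, so the exchange is legitimate: in $\mathcal T'$ the edge $xx'$ becomes undirected red, $vv'$ becomes a blue arc toward $v'$, the arcs along $\pathInBlueTree{\mathcal T}{v}{x}$ are reversed, and every other arc is unchanged.

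For 1) I would observe that $E(\redForest(\mathcal T'))=(E(\redForest(\mathcal T))-vv')+xx'$ with $vv'\in E(K)$ and $xx'\in\generatingEdgesInRed$ because $x\in\X$. Hence no edge of any $C\in\mathcalC$ is removed, so $\bigcup_{C\in\mathcalC}E(C)\subseteq E(\redForest(\mathcal T'))$; the remaining red edges still form a subset of $E(K)\cup(\generatingEdgesInRed)$; and each component of $\redForest(\mathcal T^*)$ outside $\mathcalC\cup\{K\}$, being vertex-disjoint from $V(K)$ and from every small child of $K$ and being a component of $\redForest(\mathcal T)$ by 1) for $\mathcal T$, is untouched by the exchange and remains a component of $\redForest(\mathcal T')$.

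For 2), let $(R_1,\dots,R_t)$ be the legal order for $\mathcal T$ guaranteed by 2). I would show its initial segment survives: namely $R^*_1,\dots,R^*_{\I}$ (respectively the first $\iSigStar(K)-1$ of them together with the witnessing arc of $K$, in the case $\IEqISigK$) forms a legal initial segment for $\mathcal T'$, and then extend it greedily to a legal order of $\mathcal T'$ --- always possible, since in any exploration subgraph each red component is entered along a blue arc from a component closer to $r$. Two checks remain. (i) Each $R^*_i$ with $i\le\I$ is still a red component of $\redForest(\mathcal T')$: this holds because $\I\le\iSigStar(K)$, so $i\le\I<\iSigStar(K)$ forces $R^*_i\notin\mathcalC\cup\{K\}$ and thus $V(R^*_i)$ avoids $V(K)$ and all small children, while the only changed red edges ($vv'$ removed, $xx'$ added) are incident only to those vertices. (ii) For $2\le j\le\I$ the blue arc witnessing $R^*_j$ as a child of $R^*_{i_j}$ in $\mathcal T$ still lies in $E(\mathcal T')$ with the same orientation: the only arcs the exchange disturbs are $(x,x')$ and those of $\pathInBlueTree{\mathcal T}{v}{x}$, whose tails are $x$ or proper descendants of $x$ in $\blueTree(\mathcal T)$, whereas a witnessing arc for $R^*_j$ has its tail in $R^*_{i_j}$ with $i_j<\I<\iSigStar(K)$.

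The step I expect to be the real obstacle is (ii): one must rule out that the path $\pathInBlueTree{\mathcal T}{v}{x}$ --- hence any arc whose orientation the exchange reverses --- runs through one of the components $R^*_1,\dots,R^*_{\I}$, and, in the case $\IEqISigK$, that it does not contain the witnessing arc of $K$. This is where the definition $\I=\min_{x\in\X}\iSigStar(T_x(\X,\mathcal T^*))$ together with the structural items 2), 7), 8) --- which pin down where $\bar S(\mathcal T)$, $u(\mathcal T)$ and the blue paths from $u(\mathcal T)$ to $r$ sit --- must be used with care; the delicate point is that we reason inside $\mathcal T$, whose blue trees differ from those of $\mathcal T^*$, so the bound defining $\I$ has to be transported along the chain of exchanges that produced $\mathcal T$.
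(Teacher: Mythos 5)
Your skeleton matches the paper's: 1) is indeed pure bookkeeping, and 2) reduces to showing that the initial segment $R^*_1,\dots,R^*_{\I}$ (resp.\ up to $\iSigStar(K)-1$, together with the witnessing arc of $K$) is untouched by the exchange, after which greedy extension to a full legal order is routine. However, the one step you explicitly defer as ``the real obstacle'' --- ruling out that $\pathInBlueTree{\mathcal T}{v}{x}$ (and hence any reoriented arc, or the removed arc $(x,x')$) meets a red component of index $<\I$ --- is precisely the content of the lemma, and your proposal does not prove it; so there is a genuine gap at the heart of the argument. Moreover, the route you sketch for closing it is off target: you suggest combining the definition $\I=\min_{x\in\X}\iSigStar(T_x(\X,\mathcal T^*))$ with items 2), 7), 8) and ``transporting'' the bound along the chain of exchanges that produced $\mathcal T$. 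No transport is needed, and 7), 8) are not the relevant conditions. The hypothesis $\mathcal T\in\setOfValidDecomps$ gives you condition 5) \emph{for $\mathcal T$ itself}: if $\ISmallerISigK$, the special vertex $v^*(\mathcal T)$ satisfies $\min_{s\in\bar S(\mathcal T)}\iSigStar(T_s(\bar S(\mathcal T),\mathcal T))=\I$, and if $\IEqISigK$, then $\min_{s\in S(\mathcal T)}\iSigStar(T_s(S(\mathcal T),\mathcal T))=\iSigStar(K)$. Since every vertex of $\pathInBlueTree{\mathcal T}{v}{x}$ is a descendant of $x\in\bar S(\mathcal T)$ in $\blueTree(\mathcal T)$, it lies in $T_s(\bar S(\mathcal T),\mathcal T)$ for some $s\in\bar S(\mathcal T)$, whence $\iSigStar(\pathInBlueTree{\mathcal T}{v}{x})\ge\I$. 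With that in hand your check (ii) goes through exactly as you describe (the tail of any witnessing arc for $R^*_j$, $j\le\I$, has index $<\I$, and in the case $\IEqISigK$ the witnessing arc of $K$ has its tail in the parent of $K$, of index $<\iSigStar(K)=\I$), and likewise $\iSigStar(K\cup\bigcup_{C\in\mathcalC}C)\ge\I$ gives (i).

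Two smaller remarks. Your verification that the exchange is performable (that $x$ and $x'$ lie in different red components of $\redForest(\mathcal T)$) is not required --- the lemma is phrased for the decomposition that \emph{can be} obtained by performing $(x,x')\leftrightarrow vv'$ --- but it is harmless; the cleaner reason is that by 1) the only red edge that could join $V(K)$ to the child containing $x'$ is $xx'$ itself, which is blue in $\mathcal T$. Also, your line ``$i\le\I<\iSigStar(K)$'' is false when $\IEqISigK$; your parenthetical treatment of that case (stopping at index $\iSigStar(K)-1$) is the correct reading, so this is only a slip of notation, not a further gap.
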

\begin{proof}
    It is clear that 1) holds for $\mathcal T'$.
    Now we prove 2a) and 2b) hold for $\mathcal T'$. First observe that as 3) holds for $\mathcal T_a$ we have that $\iSigStar(\pathInBlueTree{\mathcal T}{v}{x}) \geq \I$.
    First, let $\ISmallerISigK$. 
    Then $\iSigStar(K \cup \bigcup_{C \in \mathcalC} C) > \I$ and thus, there is a legal order $(R'_1, \ldots, R'_{t'})$ for $\mathcal T'$ such that $R'_i = R_i = R^*_i$ for all $i \in \{0, \ldots, \I\}$ and hence, 2a) holds for $\mathcal T'$.\\
    If $\IEqISigK$, then $\iSigStar(K \cup \bigcup_{C \in \mathcalC} C) \geq \iSigStar(K)$ and thus, the witnessing arc of $K$ also exists in $E(\mathcal T')$ and there is a legal order $(R'_1, \ldots, R'_{t'})$ for $\mathcal T'$ such that $R'_i = R_i = R^*_i$ for all $i \in \{0, \ldots, \iSigStar(K) - 1\}$. Hence, 2b) holds for $\mathcal T'$.

    Finally, suppose 3) does not hold for $\mathcal T'$. Thus there is a vertex $z$ with $\iSigStar(z) < \I$ that has path in $\blueTree(\mathcal T')$ to some vertex of $\X$, but it does not have such a path in $\blueTree(\mathcal T)$. Let $(u, u')$ be the first arc on this path that is not in $\blueTree(\mathcal T')$. Then $u \in V(\pathInBlueTree{\mathcal T}{v}{x})$ and thus, $z$ has a path to $x$ in $\blueTree(\mathcal T)$, which is a contradiction. 
\end{proof}

\begin{lemma} \label{lemma:oldLastItem}
    Let $\ell' \inZeroToEll$ and $\mathcal T_a \in \setOfValidDecomps$. Then $\pathInBlueTree{\mathcal T}{u(\mathcal T_a)}{r}$ either does not contain a vertex of $\bar S(\mathcal T_a)$ or $a$ is special and the first vertex on this path which is in $\bar S(\mathcal T_a)$ is $a$.
\end{lemma}
\begin{proof}
    The lemma trivially holds if $u(\mathcal T_a) = a$ is special. If $u(\mathcal T_a)$ extends $K'(\mathcal T)$ in $\mathcal T$, then the lemma holds since 9) holds for $\mathcal T$. If $u(\mathcal T_a) = a = w(K)$ is not special, then $\IEqISigK$ and there is no special vertex for $\mathcal T$ by 6a). Thus, $w(K)$ is not a descendant of any vertex of $\bar S(\mathcal T_a)$.
\end{proof}

\begin{lemma} \label{lemma:mainAugmentation}
    Let $\ell' \inZeroToEll$ and $\mathcal T_a \in \setOfValidDecomps$ such that $\mathring S(\mathcal T_a) \neq \varnothing$ and $K'(\mathcal T) \neq \varnothing$. Then there is a tuple $(\mathcal T', a') \in \setOfValidDecomps$ such that $K'(\mathcal T') \subsetneq K'(\mathcal T)$.
\end{lemma}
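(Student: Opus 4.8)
The plan is to take a valid decomposition $\mathcal T$ for which $\mathring S(\mathcal T) \neq \varnothing$ and $K'(\mathcal T)$ exists, pick a suitable vertex of $\mathring S(\mathcal T)$, perform an edge exchange of the type in Lemma~\ref{lemma:exchange} which moves the blue generating arc of that vertex into the red forest, thereby splitting $K'(\mathcal T)$ into strictly smaller pieces, and then verify that the resulting decomposition $\mathcal T'$ is again valid (so that conditions $1)$--$8)$ survive). The key structural input is that every vertex $s \in \mathring S(\mathcal T)$ has $(s, s') \in E(\blueTree(\mathcal T))$ with $s \in V(K'(\mathcal T))$, and by condition $5)$ (combined with the definition of $\I$ and of a special vertex) the subtree $T_s(S(\mathcal T), \mathcal T)$ hanging off $s$ satisfies $\iSigStar(T_s(S(\mathcal T),\mathcal T)) \geq \I$; this is what lets the exchange preserve the legal order in condition $2)$, exactly as in Lemma~\ref{lemma:legalOrderInValidDecomp}.

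Concretely, first I would locate inside $K'(\mathcal T)$ a red path from $s$ towards the attachment point of $K'(\mathcal T)$ to the rest of $K$; walking along this path there is a first vertex $v$ that is not a descendant of $s$ in $\blueTree(\mathcal T)$ (this exists and is not $s$ itself, using condition $7)$ that $u(\mathcal T) \in V(K'(\mathcal T))$ and condition $8)$ to control how $\bar S(\mathcal T)$ sits on the relevant blue path, so that the relevant red edge $vv'$ with $v$ a descendant of $s$ and $v'$ not is available). Then I would set $\mathcal T' := \mathcal T$ after performing $(s,s') \leftrightarrow vv'$ via Lemma~\ref{lemma:exchange}; this is legitimate precisely because $(\redForest(\mathcal T) + ss') - vv'$ is acyclic inside $K$. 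The effect on $K'(\mathcal T)$ is that the component is cut along $vv'$ and possibly regrown only by the single new red edge $ss'$, and one checks by an edge count (as in the proof of Lemma~\ref{lemma:rootNoChildren}) that whichever component of $\redForest(\mathcal T')[V(K)]$ now has at least $\maxOfDAndEOfK - \ell'$ edges — i.e.\ $K'(\mathcal T')$, if it exists at all — is strictly smaller than $K'(\mathcal T)$, since one edge is removed and at most one smaller‑contributing edge is added while a genuine sub‑piece is split off.

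The remaining and most delicate work is re‑establishing conditions $1)$--$8)$ for $\mathcal T'$: condition $1)$ is immediate since we only moved edges within $E(K) \cup (\generatingEdgesInRed)$ and did not touch any $C \in \mathcalC$; condition $2)$ follows as in Lemma~\ref{lemma:legalOrderInValidDecomp} from the inequality $\iSigStar(\pathInBlueTree{\mathcal T}{v}{s}) \geq \I$; conditions $3)$ and $4)$ need a check that no $\bar L$ acquires two generating edges and that vertices of $\X \cap V(K'(\mathcal T'))$ still have their generating arc oriented away from $K'$; and conditions $5)$--$8)$ require tracking $a(\mathcal T)$, $S(\mathcal T)$, $v^*(\mathcal T)$ and $u(\mathcal T)$ through the exchange — in particular one must argue the special/witnessing structure is inherited, which is why one should choose $v$ and the exchanged vertex $s$ carefully relative to $v^*(\mathcal T)$ (using condition $8)$ that the blue path from $u(\mathcal T)$ to $r$ meets $\bar S(\mathcal T)$ only at $v^*(\mathcal T)$ if at all). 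The main obstacle I expect is exactly this bookkeeping: ensuring that after the exchange the vertex playing the role of $u(\mathcal T')$ still lies in the shrunken $K'(\mathcal T')$ (condition $7)$) and that condition $8)$ is restored, possibly by a careful, canonical choice of which $s \in \mathring S(\mathcal T)$ to exchange (e.g.\ one minimizing $\iSigStar(T_s(\cdot))$, or one furthest from $v^*(\mathcal T)$ along the relevant blue path), and by re‑deriving the acyclicity hypothesis of Lemma~\ref{lemma:exchange} from condition $3)$.
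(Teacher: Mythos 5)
Your overall strategy is the same as the paper's: split $K'(\mathcal T)$ by exchanging, via Lemma \ref{lemma:exchange}, a generating blue arc of a vertex of $\bar S(\mathcal T)$ with a red edge of $K'(\mathcal T)$, note that any new $K'(\mathcal T')$ must be a strictly smaller piece of the split (correct, and even easier than your edge count, since the added red edge $ss'$ leaves $V(K)$ and so never contributes to a component of $\redForest(\mathcal T')[V(K)]$), and invoke Lemma \ref{lemma:legalOrderInValidDecomp} for conditions 1) and 2). But the entire content of this lemma is the part you explicitly defer: choosing \emph{which} arc to exchange and \emph{which} red edge to cut so that 4)--8) survive. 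Your one concrete recipe --- fix $s \in \mathring S(\mathcal T)$, walk a red path from $s$, let $v$ be the first vertex that is not a blue descendant of $s$, and perform $(s,s') \leftrightarrow vv'$ --- fails in general. First, the exchange reorients $\pathInBlueTree{\mathcal T}{v'}{s}$, and this path may pass through other vertices of $\X$ that land in the surviving large component, or through $v^*(\mathcal T)$, breaking 4), 5) and 8). Second, after your exchange the new $t$-vertex is $s$, and nothing forces the side of the cut containing $s$ (or, if $t(\mathcal T')$ fails to exist, containing $a(\mathcal T')$) to be the side that still has at least $\maxOfDAndEOfK - \ell'$ edges, so 7) can fail outright. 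Third, ``the attachment point of $K'(\mathcal T)$ to the rest of $K$'' is not a well-defined target: $K'(\mathcal T)$ is a component of $\redForest(\mathcal T)[V(K)]$ and the rest of $K$ is reached only through blue edges.

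The paper's proof is organized precisely around these obstacles, and its choices are quite different from yours. It takes $x$ to be a leaf of the descendant tree on $\bar S(\mathcal T) + r$ (or the parent of $v^*(\mathcal T)$ if $v^*(\mathcal T)$ is the only leaf), follows the red path $\pathIn{K'(\mathcal T)}{u(\mathcal T)}{x}$ from $u(\mathcal T)$ --- not from a preselected $s$ --- and picks the cut edge on this path by a first-time (Case 1) or last-time (Case 2) condition phrased in terms of the blue paths $\pathInBlueTree{\mathcal T}{v_i}{r}$. The arc actually exchanged is that of $y$, the first $\mathring S(\mathcal T)$-vertex on $\pathInBlueTree{\mathcal T}{v}{r}$, or that of $x$, and the case distinction on whether the side containing $u(\mathcal T)$ has fewer than or at least $\maxOfDAndEOfK - \ell'$ edges is exactly what guarantees 7): in Case 1 the new $u$-vertex becomes $y$ inside the only possibly-large piece, while in Case 2 the cut is pushed past the large piece so $u(\mathcal T') = u(\mathcal T)$ stays in it. Moreover, when the intended exchange is impossible because the relevant endpoint is a blue descendant of $y$ (resp.\ $x$), the paper instead exchanges $\blueEdgeOfVStarIn{\mathcal T}$ and \emph{redefines} the special vertex to be $y$ (resp.\ $x$) (Subcases 1.2 and 2.2); your sketch has no analogue of this fallback, and tuning the choice of $s$ (minimizing $\iSigStar$, or maximizing distance from $v^*(\mathcal T)$) does not supply one. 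So the mechanics you describe are sound, but the proof of the hard part --- preservation of 4)--8), including the case analysis and the re-designation of $v^*$ --- is missing, and the specific uniform exchange you propose would break those conditions.
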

\begin{proof}
    If $a$ is special for $\mathcal T$, then let $a =: v_*$. 
    Consider the tree  $Q$ with vertex set $\bar S(\mathcal T_a) + r$ where two vertices $v_1, v_2$ are linked by an arc $(v_1, v_2)$ whenever $v_1$ is a descendant of $v_2$ in $\blueTree(\mathcal T)$. Observe that $Q$ has at least  two vertices which are not $a$ since $\mathring S(\mathcal T_a) \neq \varnothing$.  Let $Q$ be rooted at $r$ such that every vertex has a directed path to $r$. 
    If $a$ is special for $\mathcal T$ and the only non-root leaf in $Q$, then let $x$ be the parent of $a$ (in this case $Q$ is a path from $a$ to $r$). Otherwise let $x$ be a non-root leaf of $Q$. 
    Note that $x \notin \{u(\mathcal T_a), r\}$ and furthermore, if $a$ is special we have $x \neq v_*$.
    Since 8) holds for $\mathcal T_a$, we have that $u(\mathcal T_a) \in K'(\mathcal T)$, and as $x \in \mathring S(\mathcal T_a)$, it follows that both $x$ and $u(\mathcal T_a)$ are in $K'(\mathcal T)$.\\
    Let $\pathIn{K'(\mathcal T)}{u(\mathcal T_a)}{x} = [v_1, \ldots, v_n]$. For $i \in \{2, \ldots, n\}$ let $L_i$ and $L'_i$ be the components of $K'(\mathcal T) - v_{i-1} v_i$ containing $v_i$ and $v_{i-1}$, respectively.
    Note that the notations defined in the following are illustrated on the right side of Figure \ref{fig:subcaseOneOne}.
    Let $i$ be minimal with the property that $\pathInBlueTree{\mathcal T}{v_i}{r}$ 
    visits a vertex of $V(L_i) \cap \mathring S(\mathcal T_a)$ before it reaches any vertex of $\{v_*, r\}$. 
    Note that $i$ exists since $x \in V(L_n) \cap \mathring S(\mathcal T_a)$ and we have $v_i \neq u(\mathcal T_a)$ by Lemma \ref{lemma:oldLastItem} and thus, $i \geq 2$.
    Let $v := v_i$ and $v' := v_{i-1}$ and note that $\pathInBlueTree{\mathcal T}{v'}{r}$ either does not contain a vertex of $\bar S(\mathcal T_a)$ or the first vertex on this path which is in $\bar S(\mathcal T_a)$ is $v_*$ (if $i = 2$ and thus, $v' = u(\mathcal T_a)$, this is true by Lemma \ref{lemma:oldLastItem}). Let $L := L_i$ and $L' := L'_i$. We split into cases depending on how many edges $L'$ has.

    \doubleFigure{\subcaseOneOneBefore}{\subcaseOneOneAfter}{
        $\mathcal T_a$ (left side) and $\mathcal T'_a$ (right side) in Subcase 1.1 in the proof of Lemma \ref{lemma:mainAugmentation}.
    }{\label{fig:subcaseOneOne}}
    
    \textbf{Case 1: $e(L') \leq \edgeLimit$:}\\
    Let $y$ be the first vertex on $\pathInBlueTree{\mathcal T}{v}{r}$ that is in $V(L) \cap \mathring S(\mathcal T_a)$.
    
    \textbf{Subcase 1.1: $v'$ is not a descendant of $y$ in $\blueTree(\mathcal T)$:}\\
    Note that $v'$ is a neighbour of $v$ in the red path from $u(\mathcal T_a)$ to $v$ while, compliant to Notation \ref{notation:primeIsParent}, $y'$ shall denote the parent of $y$ in $\blueTree(\mathcal T^*)$.
    We obtain $\mathcal T'$ from $\mathcal T$ by performing $(y, y') \leftrightarrow vv'$ (which reorients $\pathInBlueTree{\mathcal T}{v}{y}$)
    and show that $\mathcal T'_a := (\mathcal T', a) \in \setOfValidDecomps$ and $\mathcal T'_a$ satisfies the lemma. Note that $\mathcal T'_a$ is depicted on the left side of Figure \ref{fig:subcaseOneOne}.\\
    Note that 1), 2a), 2b) and 3) hold for $\mathcal T'$ by Lemma \ref{lemma:legalOrderInValidDecomp}. Hence, we focus on the remaining conditions.\\
    To see that 4) holds, first observe that since 4) holds for $\mathcal T$, we only need to check the condition on the components $L$ and $L'$ affected by the exchange. Observe if there is a vertex $z \in \X \cap V(K'(\mathcal T))$ such that $zz' \in E(\redForest(\mathcal T))$, then $z = u(\mathcal T_a)$ since 4) holds for $\mathcal T$. 
    Since $u(\mathcal T_a)$ is in $L'$ and $y$ is in $L$, we have that 4) also holds for $\mathcal T'$.\\
    Note that either $K'(\mathcal T') = \varnothing$ or $K'(\mathcal T') = L$.
    As 5) holds for $\mathcal T$ and since $y$ is the only vertex of $\X \cap V(L)$ contained in $\pathInBlueTree{\mathcal T}{v}{y}$, we have that 5) also holds for $\mathcal T'$. Thus, $\mathring S(\mathcal T'_a) = (\mathring S(\mathcal T_a) \cap V(L)) - y$ if $K'(\mathcal T) \neq \varnothing$.\\
     Next, we want to show 6a) and suppose that $a = v_*$ is special for $\mathcal T$. Since $\pathInBlueTree{\mathcal T}{v}{y} \subseteq T_y(\bar S(\mathcal T_a), \mathcal T)$ and the vertex set of the supergraph is disjoint from $V(T_{v_*}(\bar S(\mathcal T_a), \mathcal T))$, we have that $T_{v_*}(\bar S(\mathcal T_a), \mathcal T) \subseteq T_{v_*}(\bar S(\mathcal T'_a), \mathcal T')$. Thus, $v_*$ is also special for $\mathcal T'$ and 6a) holds for $\mathcal T'_a$.\\
    Now we consider 6b). Suppose that $a = w(K)$ is not special for $\mathcal T$ and thus, there is no special vertex for $\mathcal T$ and $\IEqISigK$. Then $\pathInBlueTree{\mathcal T}{w(K)}{r}$ does not contain a vertex of $\X$ and in particular, it does not contain $x$. As all tails of the arcs of $E(\blueTree(\mathcal T')) \setminus E(\blueTree(\mathcal T))$ are descendants of $x$, we have that $\pathInBlueTree{\mathcal T}{w(K)}{r} \subseteq \blueTree(\mathcal T')$. Thus, there is no special vertex for $\mathcal T'$. Hence, 6b) holds for $\mathcal T'_a$.\\
    We show that $a \notin V(L)$ and thus, 7) holds for $\mathcal T'_a$: 
    if $u(\mathcal T_a)$ extends $K'(\mathcal T)$ in $\mathcal T$, then $a$ is not contained in $K'(\mathcal T)$ since 7) holds for $\mathcal T$.
    If, on the other hand, we have $u(\mathcal T_a) = a$, then $a \in V(L')$. Thus, 7) holds for $\mathcal T'_a$.\\
    Note that if $\varnothing \neq K'(\mathcal T') = L$, then $u(\mathcal T'_a) = y$ extends $L$ in $\mathcal T'$ and thus, 8) holds for $\mathcal T'_a$. Furthermore, note that $\pathInBlueTree{\mathcal T'}{y}{r}$ contains $(v, v')$. By the definition of $v$ and $v'$, we have that 9) holds for $\mathcal T'_a$.\\
    Thus, $\mathcal T'_a \in \setOfValidDecomps$ and furthermore, since $K'(\mathcal T') \in \{L, \varnothing\}$, we have that $K'(\mathcal T') \subseteq L \subseteq K'(\mathcal T) - vv'$, which completes the claim in this case.

    \doubleFigure{\subCaseOneTwoBefore}{\subCaseOneTwoAfter}{
        $\mathcal T_a$ (left side) and $\mathcal T'_y$ (right side) in Subcase 1.2 in the proof of Lemma \ref{lemma:mainAugmentation}
    }{\label{fig:subcaseOneTwo}}
    
    \textbf{Subcase 1.2: $v'$ is a descendant of $y$ in $\blueTree(\mathcal T)$:}\\
    This case is depicted in Figure \ref{fig:subcaseOneTwo}. In this case, it follows that $a = v_*$ is special for $\mathcal T$ and $\pathInBlueTree{\mathcal T}{v'}{y}$ contains $v_*$. Note that $v$ is not a descendant of $v_*$ in $\blueTree(\mathcal T)$ or otherwise $\blueTree(\mathcal T)$ contains a cycle. Thus, we may obtain $\mathcal T'$ from $\mathcal T$ by performing $(v_*, v'_*) \leftrightarrow v'v$ (which reorients $\pathInBlueTree{\mathcal T}{v'}{v_*}$) and show that $(\mathcal T', y)$ is a valid intermediate state for $K$ and $\ell'$ and satisfies the lemma. 
    Note that 1), 2a), 2b) and 3) hold for $\mathcal T'$ by Lemma \ref{lemma:legalOrderInValidDecomp}.\\
    Now we prove 4) holds. As 7) holds for $\mathcal T_a$, we have that the component of $v_*$ in $\redForest(\mathcal T')$ contains exactly one edge of $\generatingEdgesInRed$, which is $v_* v'_*$, and thus, 4) holds for $\mathcal T'$.\\
    Note that if $K'(\mathcal T') \neq \varnothing$, then $K'(\mathcal T') = L$.
    As 5) holds for $\mathcal T$ and since $v_*$ is the only vertex of $\X \cap V(L)$ contained in $\pathInBlueTree{\mathcal T}{v'}{v_*}$, we have that 5) also holds for $\mathcal T'$.\\
    We have that $V(T_{v_*}(\bar S(\mathcal T_a), \mathcal T)) \subseteq V(T_y(\bar S(\mathcal T'_y), \mathcal T'))$ since in $\blueTree(\mathcal T')$ there is a path from $v_*$ to $y$ going over $(v', v)$ and this path does not contain any vertices of $\mathring S(\mathcal T'_y) - y$. Thus, $y$ is a special vertex for $\mathcal T'$ and 6a) holds for $\mathcal T'_y$. As there is a special vertex for $\mathcal T'$, 6b) trivially holds for $\mathcal T'_y$.\\
    Note that if $\overline{K'(\mathcal T)}$ contains an edge of $\generatingEdgesInRed$, then $u(\mathcal T_a)$, which is in $L'$, extends $\mathcal K'(\mathcal T)$ in $\mathcal T$. Since $y \in V(L)$, we have that 7) holds for $\mathcal T'_y$.\\
    Using the same argument, we have that there is no vertex extending $L$ in $\mathcal T'$, and thus, $u(\mathcal T'_y) = y \in V(L)$. Hence, 8) and 9) hold for $\mathcal T'_y$.\\
    Thus, $\mathcal T'_y \in \setOfValidDecomps$ and furthermore, since $K'(\mathcal T') \in \{L, \varnothing\}$, we have that $K'(\mathcal T') \subseteq L \subseteq K'(\mathcal T) - vv'$.
    
    \textbf{Case 2: $e(L') \geq \maxOfDAndEOfK - \ell'$:}\\
    Recall that $\pathIn{K'(\mathcal T)}{u(\mathcal T_a)}{x} = [v_1, \ldots, v_n]$.
    Let $j \in \{1, \ldots, n\}$ be maximal such that $v_j$ is not a descendant of $x$ in $\blueTree(\mathcal T)$ or $\pathInBlueTree{\mathcal T}{v_j}{r}$ visits $v_*$ and after that it visits $x$. Note that $j$ exists and $i-1 \leq j < n$ . Let $\bar v := v_{j+1}$ and $\bar v' := v_j$. Furthermore, let $M$ and $M'$ be the components in $K'(\mathcal T) - \bar v \bar v'$ of $x$ and $u(\mathcal T_a)$, respectively. Note that $e(M') \geq e(L')$. We note that the edge exchange operations in the following two subcases are similar to the ones in the Subcases 1.1 and 1.2.

    \textbf{Subcase 2.1: $\bar v'$ is not a descendant of $x$ in $\blueTree(\mathcal T)$:}\\
    We obtain $\mathcal T'$ from $\mathcal T$ by performing $(x, x') \leftrightarrow \bar v \bar v'$ and show that $(\mathcal T', a)$ is a valid intermediate state for $K$ and $\ell'$ and satisfies the lemma.\\
    Note that 1), 2a), 2b) and 3) hold for $\mathcal T'$ by Lemma \ref{lemma:legalOrderInValidDecomp}.\\
    Since $x$ is not in the component of $u(\mathcal T_a)$ in $\redForest(\mathcal T')$, we have that 4) holds for $\mathcal T'$.\\
    Note that $K'(\mathcal T') = M'$.
    As 5) holds for $\mathcal T$ and since by the definition of $x$ and $\bar v$ we have that $x$ is the only vertex of $\X \cap V(M)$ contained in $\pathInBlueTree{\mathcal T}{\bar v}{x}$, we have that 5) also hold for $\mathcal T'$.\\
    Analogously to Subcase 1.1 it can be proven that 6a) and 6b) hold for $\mathcal T'_a$.\\
    We show that $a \notin V(M)$ and thus, 7) holds for $\mathcal T'_a$: 
    if $u(\mathcal T_a)$ extends $M$ in $\mathcal T'$, then $a$ is not contained in $K'(\mathcal T)$ since 7) holds for $\mathcal T_a$. 
    If, on the other hand, we have that $u(\mathcal T_a) = a$, then $a \in V(M')$. Thus, 7) holds for $\mathcal T'_a$.\\
    Observe that 8) holds for $\mathcal T'_a$ since $u(\mathcal T'_a) = u(\mathcal T_a) \in V(M')$.\\
    Finally, suppose that 9) does not hold for $\mathcal T'_a$ and let $(z, z')$ be the first arc on $\pathInBlueTree{\mathcal T}{u(\mathcal T_a)}{r}$ that is not on $\pathInBlueTree{\mathcal T'}{u(\mathcal T_a)}{r}$. Then $(z, z')$ is in $\pathInBlueTree{\mathcal T}{\bar v}{x}$. 
    As 9) holds for $\mathcal T_a$, we have that $a = v_*$ is special for $\mathcal T$ and $v_* \in V(\pathInBlueTree{\mathcal T}{u(\mathcal T_a)}{x})$. Since $v_* \notin V(\pathInBlueTree{\mathcal T}{\bar v}{x})$, we have that $v_*$ is in $\pathInBlueTree{\mathcal T}{u(\mathcal T_a)}{z} \subseteq \blueTree(\mathcal T')$ and thus, 9) holds for $\mathcal T'_a$.\\
    Thus, $\mathcal T'_a \in \setOfValidDecomps$ and furthermore, $K'(\mathcal T') = M' \subseteq K'(\mathcal T) - \bar v \bar v'$.
    
    \textbf{Subcase 2.2: $\bar v'$ is a descendant of $x$ in $\blueTree(\mathcal T)$:}\\
    Then $a = v_*$ is special for $\mathcal T$ and $\pathInBlueTree{\mathcal T}{\bar v'}{x}$ contains $v_*$.
    Note that $\bar v$ is not a descendant of $v_*$ in $\blueTree(\mathcal T)$ or otherwise $\blueTree(\mathcal T)$ contains a cycle. Thus, we may obtain $\mathcal T'$ from $\mathcal T$ by performing $(v_*, v'_*) \leftrightarrow v'v$ and show that $(\mathcal T', x)$ is a valid intermediate state for $K$ and $\ell'$ and satisfies the lemma.\\
    We have that 1), 2a), 2b) and 3) hold for $\mathcal T'$ by Lemma \ref{lemma:legalOrderInValidDecomp}.\\
    As 7) holds for $\mathcal T_a$, we have that the component of $v_*$ in $\redForest(\mathcal T')$ contains exactly one edge of $\generatingEdgesInRed$, and thus, 4) holds for $\mathcal T'$.\\
    Note that $K'(\mathcal T') = M'$. Since $v_*$ is a descendant of $x$ in $\blueTree(\mathcal T)$, we have by the definition of $x$ that there are no vertices of $\mathring S(\mathcal T_a)$ on $\pathInBlueTree{\mathcal T}{\bar v'}{v_*}$, the path that is reoriented in $\blueTree(\mathcal T')$. Thus, since 5) holds for $\mathcal T$, we have that 5) also holds for $\mathcal T'$.\\
    We have that $V(T_{v_*}(\bar S(\mathcal T_a), \mathcal T)) \subseteq V(T_x(\bar S(\mathcal T'_x), \mathcal T'))$ since in $\blueTree(\mathcal T')$ there is a path from $v_*$ to $x$ going over $(\bar v', \bar v)$ and this path does not contain any vertices of $\mathring S(\mathcal T'_x) - x$. Thus, $x$ is a special vertex for $\mathcal T'$ and 6a) holds for $\mathcal T'_x$ and furthermore, 6b) trivially holds for $\mathcal T'_x$.\\
    Note that if $\overline{K'(\mathcal T)}$ contains an edge of $\generatingEdgesInRed$, then $u(\mathcal T_a)$, which is in $M'$, extends $K'(\mathcal T)$ in $\mathcal T$. Since $x \in V(M)$, we have that 7) holds for $\mathcal T'_x$.\\
    Using the same argument, we have that there is no vertex extending $M$ in $\mathcal T'$, and thus, $u(\mathcal T'_x) = x \in V(M)$. Hence, 8) and 9) hold for $\mathcal T'_x$.\\
    Thus, $(\mathcal T', x) \in \setOfValidDecomps$ and furthermore, $K'(\mathcal T') = M' \subseteq K'(\mathcal T) - \bar v \bar v'$.
\end{proof}

\begin{notation} \label{notation:alpha}
    In the following let $\alpha := \max \{d' - e(K), 0\}$.
\end{notation}

\begin{lemma} \label{lemma:existenceValidColouringForAugment}
    Let $\ell' \inZeroToEll$. If $|\mathcalC| \geq \ell' - \alpha + 2$, then there is a tuple $(\mathcal T, a) \in \setOfValidDecomps$ such that $K'(\mathcal T) = \varnothing$.
\end{lemma}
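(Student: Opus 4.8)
\begin{proofSketch}
The plan is to build the required decomposition by starting from a valid decomposition with $K'(\mathcal T)=K$ and repeatedly applying Lemma~\ref{lemma:mainAugmentation} to make $K'(\mathcal T)$ strictly smaller, using the $\ell'-\alpha+2$ children of $K$ (which contribute $\ell'-\alpha+2$ distinct generating vertices to $\X$) to guarantee that this process never stalls before $K'(\mathcal T)$ disappears. Two facts will be used throughout. First, every $\mathcal T\in\setOfValidDecomps$ satisfies $\sum_{L\in\mathcal L(\mathcal T)}e(L)\le e(K)$, since by 1) the red edges of $\mathcal T$ inside $V(K)$ form a subset of $E(K)$; hence, whenever $K'(\mathcal T)$ exists, the components of $\redForest(\mathcal T)[V(K)]$ other than $K'(\mathcal T)$ together carry at most $e(K)-(\maxOfDAndEOfK-\ell')=\ell'-\alpha$ edges. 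Second, $\setOfValidDecomps\neq\varnothing$: one checks directly from the eight conditions that $\mathcal T^*$ (with appropriate choices of auxiliary tree, witnessing arc and special vertex) is a valid decomposition for $K$ and $\ell'$, and that $K'(\mathcal T^*)=K$ unless $\ell'<\alpha$, in which case the first fact already shows $K'(\mathcal T^*)$ does not exist and we are done. So from now on assume $\ell'\ge\alpha$.

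Suppose for contradiction that no $\mathcal T\in\setOfValidDecomps$ has $K'(\mathcal T)$ nonexistent, and choose $\mathcal T\in\setOfValidDecomps$ minimizing $e(K'(\mathcal T))$ (legitimate since $\mathcal T^*$ lies in this set). If $\mathring S(\mathcal T)\neq\varnothing$, then Lemma~\ref{lemma:mainAugmentation} applies and yields $\mathcal T'\in\setOfValidDecomps$ with either $K'(\mathcal T')$ nonexistent (contradicting the assumption) or $e(K'(\mathcal T'))<e(K'(\mathcal T))$ (contradicting minimality). So $\mathring S(\mathcal T)=\varnothing$, and it remains to contradict the hypothesis. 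I would do this by showing $|\X\cap V(K'(\mathcal T))|\ge 2$, which already forces $\mathring S(\mathcal T)\neq\varnothing$: by 4), each $x\in\X\cap V(K'(\mathcal T))$ either has its generating edge in $\redForest(\mathcal T)$, in which case $x$ is the unique vertex $t(\mathcal T)$, or has it in $\blueTree(\mathcal T)$ oriented $(x,x')$, in which case $x\in S(\mathcal T)$; moreover, if $t(\mathcal T)$ exists then $\overline{K'(\mathcal T)}$ contains a generating edge, so by 6) the component of $a(\mathcal T)$ — hence $v^*(\mathcal T)$ when $\ISmallerISigK$ — is disjoint from $K'(\mathcal T)$, whereas if $t(\mathcal T)$ does not exist then $S(\mathcal T)=\X\cap V(K'(\mathcal T))$. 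In all cases at most one of ``$t(\mathcal T)$ exists'' and ``$v^*(\mathcal T)\in V(K'(\mathcal T))$'' can hold, so $|\mathring S(\mathcal T)|\ge|\X\cap V(K'(\mathcal T))|-1\ge 1$, a contradiction.

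It therefore remains to prove $|\X\cap V(K'(\mathcal T))|\ge2$, which is the crux. Since $|\X|=\ell'-\alpha+2$, it suffices to show that at most $\ell'-\alpha$ of these generating vertices lie outside $K'(\mathcal T)$. For this I would argue that every component $L$ of $\redForest(\mathcal T)[V(K)]$ other than $K'(\mathcal T)$ contains at least one vertex not in $\X$, so that $L$ — a tree with $e(L)$ edges — contains at most $e(L)$ vertices of $\X$; summing and invoking $\sum_{L\neq K'(\mathcal T)}e(L)\le\ell'-\alpha$ finishes. That each such $L$ has a vertex outside $\X$ should follow from 3) (each red component of $\mathcal T$ meets $\generatingEdgesInRed$ in at most one edge), the orientation restrictions in 4), and the tree structure of $\bar L$. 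I expect this step — locating the escaping vertex of each $L$ and organizing the case split into $\IEqISigK$ versus $\ISmallerISigK$ alongside the roles of $t(\mathcal T)$, $v^*(\mathcal T)$ and $w(K)$ — to be the main technical obstacle; everything else is bookkeeping on top of Lemmas~\ref{lemma:mainAugmentation} and~\ref{lemma:legalOrderInValidDecomp}.
\end{proofSketch}
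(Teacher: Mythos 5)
Your overall architecture is the paper's: $\mathcal T^*\in\setOfValidDecomps$, take $\mathcal T$ minimizing $e(K'(\mathcal T))$, use Lemma~\ref{lemma:mainAugmentation} to force $\mathring S(\mathcal T)=\varnothing$, and then use 3), 4) and 6) to see that at most one vertex of $\X\cap V(K'(\mathcal T))$ can be excused (as $t(\mathcal T)$ or $v^*(\mathcal T)$). Up to that point your argument is sound and matches the paper, modulo taking the contrapositive: the paper concludes $|\X\cap V(K'(\mathcal T))|\le 1$ and derives a contradiction from the size of $K'(\mathcal T)$, while you aim to show $|\X\cap V(K'(\mathcal T))|\ge 2$ and contradict $\mathring S(\mathcal T)=\varnothing$.

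The genuine gap is exactly the step you flag as the ``crux'': the claim that every component $L\neq K'(\mathcal T)$ of $\redForest(\mathcal T)[V(K)]$ contains a vertex outside $\X$, so that $L$ holds at most $e(L)$ vertices of $\X$. Nothing in 3), 4) or the tree structure gives this: 4) only constrains vertices of $K'(\mathcal T)$, and 3) only limits generating edges per red component, so a component $L$ consisting of one or two vertices all lying in $\X$ (with their generating edges blue or red) is not excluded by the validity conditions. Fortunately the claim is also unnecessary: count vertices instead of edges. Since $K$ is a tree, $v(K)=e(K)+1$, and by 1) the graph $\redForest(\mathcal T)[V(K)]$ uses only edges of $E(K)$, so $K'(\mathcal T)$ is a subtree of $K$ with $v(K'(\mathcal T))=e(K'(\mathcal T))+1\ge \maxOfDAndEOfK-\ell'+1$. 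Hence $|V(K)\setminus V(K'(\mathcal T))|\le e(K)+1-(\maxOfDAndEOfK-\ell'+1)=\ell'-\alpha$, and in particular $|\X\setminus V(K'(\mathcal T))|\le\ell'-\alpha$, which is all you need to conclude $|\X\cap V(K'(\mathcal T))|\ge 2$. This one-line count is precisely the paper's closing step, run in the opposite direction (there: $|\X\setminus V(K'(\mathcal T))|\ge\ell'-\alpha+1$ would force $e(K'(\mathcal T))\le\maxOfDAndEOfK-\ell'-1$, contradicting the definition of $K'(\mathcal T)$). With that replacement your proof closes and is essentially the paper's.
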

\begin{proof}
    Recall that by Observation \ref{obs:TStarValidDecomp} we have that $\setOfValidDecomps \neq \varnothing$.
    Suppose the lemma is not true and let $(\mathcal T, a) \in \setOfValidDecomps$ such that $e(K'(\mathcal T))$ is minimal.
    By Lemma \ref{lemma:mainAugmentation} we have that $\mathring S(\mathcal T_a) = \varnothing$.
    As 4) and 5) hold for $\mathcal T$ we have that $\X \cap V(K'(\mathcal T)) \subseteq \{a, t\}$ where $t$ is a vertex such that $tt' \in E(\redForest(\mathcal T))$. Since 7) holds for $\mathcal T_a$, we have that $|\X \cap V(K'(\mathcal T))| \leq 1$. Thus, $|\X \setminus V(K'(\mathcal T))| \geq \ell' - \alpha + 1$ and $e(K'(\mathcal T)) \leq e(K) - (\ell' - \alpha + 1) \leq \maxOfDAndEOfK - \ell'$, which is a contradiction to the definition of $K'(\mathcal T)$.
\end{proof}

Note that the lower bound of $|\mathcalC|$ cannot easily be lowered. In the ``worst case'' the size of $K'$ is only decreased by one edge in every exchange operation. An example for this can be seen in Figure \ref{fig:tightnessExample}.

\tripleFigure{\xyzPicOne}{\xyzPicTwo}{\xyzPicThree}{
    An example where $\ell' = 2, \alpha = 0, |\mathcal C_2(K)| = 3$ showing the tightness of Lemma \ref{lemma:existenceValidColouringForAugment} when applying the augmentations of Lemma \ref{lemma:mainAugmentation} starting with $\mathcal T^*$ on the left and progressing to the right with every operation suggested by the proof of Lemma \ref{lemma:mainAugmentation}. Note that every vertex on the dotted red paths is supposed to have a path to $u(\mathcal T^*_{v_*})$. After the two augmentations we have that $K'(\mathcal T') \neq \varnothing$ has $e(K) - 2$ edges.
}{\label{fig:tightnessExample}}

Finally, we show that the decomposition $\mathcal T$ from Lemma \ref{lemma:existenceValidColouringForAugment} either already contradicts the minimality of $\sigma^*$ or there is a special path for $\mathcal T$ that we can use to retrieve a decomposition having a smaller legal order than $\sigma^*$.

\begin{lemma} \label{lemma:numSmallChildren}
    For every $b \inOneToK$ and $\ell' \inZeroToEll$ the number of children of $K$ generated by $\blueTree(\mathcal T^*)$ each having at most $\ell'$ edges is at most $\ell' - \alpha + 1$.
\end{lemma}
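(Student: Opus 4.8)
The plan is to derive this as a direct consequence of Lemma \ref{lemma:existenceValidColouringForAugment} together with the special-paths machinery from Section \ref{sec:smallComponentsNoSmallChildren}. Suppose toward a contradiction that for some $\ell' \inZeroToEll$ the component $K$ has at least $\ell' - \alpha + 2$ children generated by $\blueTree(\mathcal T^*)$ each having at most $\ell'$ edges; in particular $K$ has at least $\ell' - \alpha + 2$ children, so Lemma \ref{lemma:existenceValidColouringForAugment} applies and yields a valid decomposition $\mathcal T \in \setOfValidDecomps$ for $K$ and $\ell'$ with the property that $K'(\mathcal T)$ does not exist. The point of ``$K'(\mathcal T)$ does not exist'' is that every component $L \in \mathcal L(\mathcal T)$ of $\redForest(\mathcal T)[V(K)]$ has at most $\maxOfDAndEOfK - \ell' - 1 = \edgeLimit$ edges, and by condition 3) each $\bar L$ contains at most one of the generating edges in $\generatingEdgesInRed$, so each $\bar L$ is small enough to serve as the ``short end'' of a split.

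Next I would use the vertices of $\X$ and their generating arcs to build a special path. Pick $x \in \X$ achieving $\iSigStar(T_x(\X, \mathcal T^*)) = \I$; actually, working in $\mathcal T$, take a vertex $x \in \X$ with $(x, x') \in E(\blueTree(\mathcal T))$ — condition 5) guarantees such a vertex (a special vertex $v^*(\mathcal T)$ if $\ISmallerISigK$, or a vertex realizing the minimum in $S(\mathcal T)$ if $\IEqISigK$) — and note that by condition 2) the legal order of $\mathcal T$ agrees with $\sigma^*$ on the initial segment up to index $\I$ (resp.\ $\iSigStar(K) - 1$). The child $C$ of $K$ generated by $(x, x')$ has $e(C) \leq \ell'$, and since $e(K) + e(C) < d'$ would contradict Lemma \ref{lemma:sumOfChildRelation}, we get $e(K) \geq d' - \ell' \geq d' - \ell$; combined with $\alpha = \max\{d' - e(K), 0\}$ this pins down $\alpha \leq \ell'$, so $\ell' - \alpha + 1 \geq 1$, and the count we are trying to contradict is genuinely meaningful. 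The blue path from $x$ up to $r$, truncated at the first vertex leaving the component structure appropriately, together with the red generating edge $xx'$, should form a special path $P = [v_0, \ldots, v_l]$ with respect to $\mathcal T$, $\sigma$ and its last arc, where $i_\sigma(v_0)$ lies in the initial segment on which $\sigma$ and $\sigma^*$ agree — this is exactly the hypothesis needed to invoke Corollary \ref{cor:specialPathInStandardCase}.

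The contradiction then comes from a residue count: since $K'(\mathcal T)$ does not exist, recoloring the last arc of the special path red splits $K$ only into pieces of at most $\edgeLimit < d'$ edges (using condition 3) to control the red component $\bar L$ that absorbs the new edge, and conditions 6)--8) to ensure the relevant piece does not merge with a large component through a generating edge), so $\rho(\redForest(\mathcal T) + v_{l-1} v_l) = \rho^*$; but Corollary \ref{cor:specialPathInStandardCase} forces $\rho(\redForest(\mathcal T) + v_{l-1} v_l) > \rho^*$, a contradiction. The main obstacle I anticipate is the bookkeeping in the previous paragraph: verifying that the chosen vertex $x$ and its blue path genuinely satisfy the ``special path'' definition (in particular the ancestor condition in $Aux(\mathcal T, \sigma)$ when $v_0$ and $v_{l-1}$ lie in the same red component) and that after recoloring no component exceeds $d'$ edges — this is precisely where conditions 2), 6), 7) and 8) of ``valid decomposition'' are designed to be used, so the proof should amount to threading those conditions through carefully rather than introducing a new idea.
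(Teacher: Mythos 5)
Your overall strategy coincides with the paper's: assume $K$ has $\ell'-\alpha+2$ children with at most $\ell'$ edges, deduce $e(K)\geq d'-\ell'$ and $\alpha\leq\ell'$ from Lemma \ref{lemma:sumOfChildRelation}, invoke Lemma \ref{lemma:existenceValidColouringForAugment} to obtain $\mathcal T\in\setOfValidDecomps$ in which $K'(\mathcal T)$ does not exist, show the residue of the modified red forest is at most $\rho^*$, and contradict Corollary \ref{cor:specialPathInStandardCase}. However, there is a genuine gap in the case $\IEqISigK$. Your chosen vertex does not exist there: $S(\mathcal T)$ consists of vertices of $\X\cap V(K'(\mathcal T))$ whose generating arc is still blue, so once $K'(\mathcal T)$ does not exist we have $S(\mathcal T)=\varnothing$, and there is no ``vertex realizing the minimum in $S(\mathcal T)$.'' More importantly, the special-path route cannot be repaired by picking some other $x\in\X$: when $\IEqISigK$, every vertex in the blue subtrees below the generating vertices lies in a component of $\sigma^*$-index at least $\iSigStar(K)$, while condition 2) only guarantees that the legal order $\sigma$ of $\mathcal T$ agrees with $\sigma^*$ up to index $\iSigStar(K)-1$; hence no special path with $i_\sigma(v_0)$ inside the segment where $e(R_i)\leq e(R^*_i)$ is available, and Corollary \ref{cor:specialPathInStandardCase} cannot be applied. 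The paper handles this case by a different mechanism: the claim $\rho(\redForest(\mathcal T))\leq\rho^*$ places $\mathcal T$ in $\mathcal F^*$, and then condition 2) (the witnessing arc of $K$ survives) together with condition 6) (the red component of $w(K)$ lies in $\mathcal L(\mathcal T)$, hence has fewer than $e(K)$ edges) yields a legal order strictly smaller than $\sigma^*$, contradicting the choice of $\sigma^*$ with no special path at all. Your write-up collapses both cases into the special-path argument, so this case is missing.

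Two further inaccuracies in the case $\ISmallerISigK$, where your plan does follow the paper: the special path ends with the still-blue generating arc $\blueEdgeOfVStarIn{\mathcal T}$ and starts at a vertex $v_0$ of $T_{v^*(\mathcal T)}(\bar S(\mathcal T),\mathcal T)$ with $\iSigStar(v_0)\leq\I$, i.e.\ it runs from a blue descendant up to $v^*(\mathcal T)$ and then across into the small child; it is not ``the blue path from $x$ up to $r$ together with the red generating edge'' (the final arc of a special path must be blue, and the path approaches $x$ from below). Also, your justification of the residue step is too strong: after recolouring the generating arc red, the component containing $v^*(\mathcal T)$ can have up to $\maxOfDAndEOfK$ edges (a piece with at most $\edgeLimit$ edges, plus one generating edge, plus a child with at most $\ell'$ edges), which may exceed $d'$ when $e(K)>d'$. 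So one cannot claim that no component exceeds $d'$; what is actually needed, and what the paper proves with a short counting claim, is the lexicographic comparison $\rho(R')\leq\rho^*$, using that at most one merged component can reach size $e(K)$ and that in that event all other components meeting $K$ have at most $d'$ edges.
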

\begin{proof}
    Let $\ell' \inZeroToEll$ and suppose that $K$ has $\ell' - \alpha + 2$ children, each having at most $\ell'$ edges. 
    Thus, by Lemma \ref{lemma:sumOfChildRelation} we have $e(K) \geq d' - \ell'$ and $\alpha \leq \ell'$.
    By Lemma \ref{lemma:existenceValidColouringForAugment} there is a tuple $(\mathcal T, a) \in \setOfValidDecomps$ such that for every $L \in \mathcal L(\mathcal T)$ we have $e(L) \leq \edgeLimit < e(K)$. If $a$ is special for $\mathcal T$, let $v_* := a$.
    Let $R' := \redForest(\mathcal T) + (v_*, v'_*)$ if $a$ is special, and $R' := \redForest(\mathcal T)$ otherwise.
    
    \begin{claimUnnumbered}
        $\rho(R') \leq \rho^*$
    \end{claimUnnumbered}
    \begin{proofInProof}
        Let $A$ be the set of components of $R'$ that contain a vertex of $K$. 
        Note that every component $\bar L \in A$ has $e(\bar L) \leq \maxOfDAndEOfK$ including the component of $v_*$ (if $a$ is special for $\mathcal T$) since 7) holds for $\mathcal T_a$. If $e(\bar L) \leq \max \{d', e(K) - 1\}$ for every $\bar L \in A$, we have that $\rho(R') \leq \rho^*$. Thus, suppose that there is an $\bar L \in A$ such that $e(\bar L) = e(K) > d'$. Then $|E(\bar L) \cap E(K)| \geq e(K) - \ell' - 1$ and thus, $|(E(K) \cap E(R')) \setminus E(\bar L)| \leq \ell'$ since at least one edge of $E(K)$ is coloured blue in $\mathcal T$. Thus, for every $\bar L' \in A \setminus \{\bar L\}$ we have that $e(\bar L') \leq \ell' + (\ell' + 1) \leq d'$ by Observation \ref{obs:twoEllPlusOne} and the claim follows.
    \end{proofInProof}
    
    Suppose that $a = w(K)$ is not special for $\mathcal T$. Note that since 7) holds for $\mathcal T_a$, the component of $w(K)$ in $\redForest(\mathcal T)$ is contained in $\mathcal L(\mathcal T)$ and thus has less than $e(K)$ edges. Since 2b) holds for $\mathcal T$, there is a smaller legal order than $\sigma^*$ for $\mathcal T$, which is a contradiction.\\
    Thus, $a$ is special for $\mathcal T$. Let $\sigma$ be the legal order for $\mathcal T$ mentioned in 2a). Note that there is a minimal special path $P := [v_0, \ldots, v_l]$ with respect to $\mathcal T, \sigma$ and $(v_*, v'_*)$ such that $\iSigStar(v_0) \leq \iSigStar(T_{v_*}(\bar S(\mathcal T_a), \mathcal T)) = \I$. If $\IEqISigK$, then $\pathInBlueTree{\mathcal T}{w(K)}{v_*}$ preceded by the witnessing arc is a special path and thus, we even have $\iSigStar(v_0) \leq \I - 1$ in this case.
    But this contradicts Corollary \ref{cor:specialPathInStandardCase}.
\end{proof}

\section{Density Calculations}
\label{sec:finish}
In this section we show that the density of a component $K$ of $\explSG$ and its small children is on average at least $\density$, which immediately enables us to prove Theorem \ref{thm:approxSNDTC}.
Before this we need a few technical lemmas which mostly amount to rearranging equations.

\begin{lemma} \label{lemma:minusAlphaTermNotDense}
    Let $1 \leq n \leq \ell$. 
    Then
    \[
    \frac{n + k\frac{(n-1)n}{2}}{n + k\frac{(n-1)n}{2} + kn}
    < \frac{d}{d+k+1}.
    \]
\end{lemma}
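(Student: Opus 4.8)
The plan is to clear denominators and reduce the claim to an elementary polynomial inequality in $k$, $\alpha$ and $d$. Write $N := \alpha + k\frac{(\alpha-1)\alpha}{2}$, so that the left-hand side of the statement is $\frac{N}{N + k\alpha}$. Since every quantity in sight is positive, the asserted inequality $\frac{N}{N+k\alpha} < \frac{d}{d+k+1}$ is equivalent, after cross-multiplying and cancelling the common term $dN$, to $N(k+1) < dk\alpha$. Because $N = \alpha\bigl(1 + \tfrac{k(\alpha-1)}{2}\bigr)$, dividing through by $\alpha > 0$ turns this into
\[
\Bigl(1 + \tfrac{k(\alpha-1)}{2}\Bigr)(k+1) < dk .
\]

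Next I would exploit monotonicity: the left-hand side above is nondecreasing in $\alpha$ (its derivative in $\alpha$ is $\tfrac{k(k+1)}{2} > 0$), so it suffices to verify the inequality at $\alpha = \ell$; note that the hypothesis $1 \le \alpha \le \ell$ forces $\ell \ge 1$. Thus I must show $(k+1) + \frac{k(k+1)(\ell-1)}{2} < dk$. Now I invoke the definition $\ell = \bigl\lfloor \frac{d-1}{k+1}\bigr\rfloor$, which gives $\ell(k+1) \le d-1$, hence $dk \ge k\ell(k+1) + k$. So it is enough to prove $(k+1) + \frac{k(k+1)(\ell-1)}{2} < k\ell(k+1) + k$, which rearranges (using $(k+1)-k=1$) to
\[
1 < k(k+1)\Bigl(\ell - \tfrac{\ell-1}{2}\Bigr) = \frac{k(k+1)(\ell+1)}{2}.
\]
This final inequality is immediate: since $k \ge 1$ and $\ell \ge 1$, we have $\frac{k(k+1)(\ell+1)}{2} \ge \frac{1\cdot 2 \cdot 2}{2} = 2 > 1$. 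Tracing the equivalences back up yields the strict inequality claimed.

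There is essentially no obstacle here; the only points needing a little care are (i) confirming that each manipulation after the cross-multiplication is a genuine equivalence, so that the final strict inequality transfers back to the statement, and (ii) recording where $\ell \ge 1$ is used — it comes solely from the standing hypothesis $1 \le \alpha \le \ell$, and indeed the claim would be false for $\ell = 0$. As an alternative to the monotonicity step, one could instead bound $\frac{k(\alpha-1)}{2} \le \frac{k(\ell-1)}{2}$ directly and arrive at the same computation.
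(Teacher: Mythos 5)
Your proof is correct and is essentially the same argument as the paper's: both exploit monotonicity in $\alpha$ to push to the extreme case governed by $\ell$ and then use $\ell(k+1)\le d-1$ (equivalently $\tfrac{\ell}{\ell+1}<\tfrac{d}{d+k+1}$), with the only difference being that you cross-multiply to a polynomial inequality while the paper stays in fraction form, chaining $\frac{\frac1k+\frac{\alpha-1}{2}}{\frac1k+\frac{\alpha-1}{2}+1}\le\frac{\alpha}{\alpha+1}\le\frac{\ell}{\ell+1}<\frac{d}{d+k+1}$.
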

\begin{proof}
We have:
    \begin{align*}
        \frac{n + k\frac{(n-1)n}{2}}{n + k\frac{(n-1)n}{2} + kn}
        = \frac{\frac{1}{k} + \frac{n-1}{2}}{\frac{1}{k} + \frac{n-1}{2} + 1}
        \leq \frac{\frac{n+n}{2}}{\frac{n+n}{2} + 1}
        \leq \frac{\ell}{\ell+1}
        < \frac{d}{d+k+1}.
    \end{align*}

    Here, in the first equality we cancelled out $n$ and $k$. In the second inequality we set $k = 1$, and use the fact that $n \geq 1$. The third inequality follows since $n \leq \ell$. The fourth inequality follows by similar reasoning to Observation \ref{obs:ellSmallerDensity}.
\end{proof}

\begin{obs} \label{obs:compareDPrimeWithBetterBound}
    We have that
    \[
    d'
    \geq d \cdot \frac{k(\ell + 1) + 1}{k+1} - \frac{k}{2} \ell (\ell + 1)
    .
    \]
\end{obs}
\begin{proof}

    \begin{align*}
    d'
    &= d + \biggCeil{k \ell \BigParan{\frac{d}{k+1} - \frac{1}{2}\bigParan{\ell + 1}}}\\
    &= d + \biggCeil{\frac{k \ell d}{k+1}} - \frac{k}{2}  \ell \bigParan{\ell + 1}\\
    &\geq d \cdot \frac{k(\ell + 1) + 1}{k+1} - \frac{k}{2} \ell (\ell + 1)
    .
    \end{align*}
\end{proof}
\begin{obs} \label{obs:densityMinus}
    Let $a_1, a_2, b_1, b_2 \in \mathbb N$ where $a_1 \geq a_2$, $b_1 > b_2 > 0$ and $\frac{a_1}{b_1} \geq \frac{a_2}{b_2}$. Then $\frac{a_1-a_2}{b_1-b_2} \geq \frac{a_1}{b_1}$.
\end{obs}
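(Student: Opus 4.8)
The plan is to reduce the claimed inequality $\frac{a-c}{b-d} \geq \frac{a}{b}$ to the hypothesis $\frac{a}{b} \geq \frac{c}{d}$ by clearing denominators. First I would record that every denominator in sight is positive: $b > 0$, and $b - d > 0$ since $b > d$; also $d > 0$ by assumption, and $a - c \geq 0$ since $a \geq c$, so the left-hand side is a well-defined nonnegative fraction. Because $b(b-d) > 0$, multiplying both sides of $\frac{a-c}{b-d} \geq \frac{a}{b}$ by this quantity shows that the inequality is equivalent to $b(a-c) \geq a(b-d)$.

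Second, I would expand: $b(a-c) = ab - bc$ and $a(b-d) = ab - ad$, so the desired inequality is equivalent to $ab - bc \geq ab - ad$, i.e.\ to $ad \geq bc$. Finally, since $b > 0$ and $d > 0$, multiplying the hypothesis $\frac{a}{b} \geq \frac{c}{d}$ through by $bd > 0$ gives precisely $ad \geq bc$, which closes the argument.

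I do not anticipate any genuine obstacle here: this is an elementary manipulation — essentially the ``anti-mediant'' companion of the mediant inequality, asserting that $\frac{a-c}{b-d}$ lies above $\frac{a}{b}$ whenever $\frac{a}{b}$ lies above $\frac{c}{d}$. The only step deserving a moment's care is checking the signs of $b$ and $b-d$ so that the cross-multiplication preserves the direction of the inequality, and this is exactly what the hypothesis $b > d > 0$ guarantees.
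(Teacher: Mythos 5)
Your argument is correct: since $b(b-d)>0$, cross-multiplication reduces $\frac{a-c}{b-d}\geq\frac{a}{b}$ to $ad\geq bc$, which is exactly the hypothesis $\frac{a}{b}\geq\frac{c}{d}$ cleared of denominators. The paper states this observation without proof, and your cross-multiplication is the standard verification one would supply (note the hypothesis $a\geq c$ is not even needed for the chain of equivalences).
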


\begin{lemma} \label{lemma:densityKPlusChildren}
    For every red component $K \neq R^*$ of $\explSG$ that is not small, we have that
    \[
    \frac{e(K) + \sum_{C \in \mathcal C(K)} e(K)}{v(K) + \sum_{C \in \mathcal C(K)} v(K)}
    \geq \density.
    \]
\end{lemma}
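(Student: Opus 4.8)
The statement bounds the average density of a non-small component $K$ together with all its small children $\mathcal C(K)$. Note there is a typo in the display — the sums should be $\sum_{C\in\mathcal C(K)} e(C)$ and $\sum_{C\in\mathcal C(K)} v(C)$, since for every red component one has $v(C)=e(C)+1$. So the claim is really that $\frac{e(K)+\sum e(C)}{v(K)+\sum v(C)} = \frac{e(K)+\sum e(C)}{(e(K)+1)+\sum (e(C)+1)} \geq \frac{d}{d+k+1}$, which after setting $m := |\mathcal C(K)|$ reduces to showing $\frac{e(K)+\sum_{C} e(C)}{e(K)+\sum_C e(C)+m+1} \geq \frac{d}{d+k+1}$, i.e. it suffices to prove $e(K)+\sum_{C\in\mathcal C(K)} e(C) \geq \frac{d}{k+1}(m+1)$ (rearranging $\tfrac{x}{x+m+1}\geq\tfrac{d}{d+k+1}$ into $x(k+1)\geq d(m+1)$, using $x\geq 0$).

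**Key steps.** First I would split into the trivial case $m=0$: then $K$ not small means $e(K)>\ell$, i.e. $e(K)\geq\ell+1\geq\frac{d}{k+1}$ by the definition of $\ell$, giving the bound immediately. For $m\geq 1$, recall $\alpha=\max\{d'-e(K),0\}$. By Corollary~\ref{cor:smallNoSmallChildren} (second bullet), having a small child forces $e(K)\geq d'-\ell$, so $\alpha\leq\ell$; I'd handle the subcase $\alpha=0$ (then $e(K)\geq d'\geq d\geq\frac{d}{k+1}(m+1)$ once one checks $m+1\leq k+1$, which follows from $m\leq\ell+1\leq k+1$ via Lemma~\ref{lemma:numSmallChildren} with $\ell'=\ell$ — actually $m = |\mathcal C(K)|$ is the number of small children, each of size $\leq\ell$, so $m\leq\ell+1$ by that lemma) separately. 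The heart is $1\leq\alpha\leq\ell$. Here Lemma~\ref{lemma:numSmallChildren} gives, for each $\ell'\in\{0,\dots,\ell\}$, at most $\ell'-\alpha+1$ children of size $\leq\ell'$; summing the per-size contributions (a child counted in the $\ell'$-threshold but not the $(\ell'-1)$-threshold has exactly $\ell'$ edges, and there are at most $(\ell'-\alpha+1)-(\ell'-1-\alpha+1)=1$... careful: the bound is on cumulative counts, so the number with exactly $j$ edges, for $j\geq\alpha$, telescopes to at most $1$, and there are $0$ children of size $<\alpha$) yields $\sum_{C} e(C)\geq\sum_{j=\alpha}^{\ell}... $ wait — I want a lower bound on $\sum e(C)$, but Lemma~\ref{lemma:numSmallChildren} is an upper bound on counts. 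The right move: the total number of small children is $m=|\mathcal C(K)|\leq\ell-\alpha+1$ (take $\ell'=\ell$), and more importantly the multiset of child-sizes, ordered increasingly, has its $i$-th smallest value $\geq\alpha+i-1$ (because at most $\ell'-\alpha+1$ children have size $\leq\ell'$), so $\sum_{C}e(C)\geq\sum_{i=1}^{m}(\alpha+i-1)=m\alpha+\frac{m(m-1)}{2}$.

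**Finishing and the main obstacle.** Combining with $e(K)\geq d'-\alpha = d+\frac{k}{2}\ell(\ell+1)-\alpha$, I need $d'-\alpha + m\alpha+\frac{m(m-1)}{2} \geq \frac{d}{k+1}(m+1)$. Since $\alpha\geq 1$ and $m\geq 1$, the terms $m\alpha+\frac{m(m-1)}{2}-\alpha = (m-1)\alpha + \frac{m(m-1)}{2}$ are nonnegative, and the real content is comparing $d' - \alpha$ against $\frac{d}{k+1}(m+1)$, using $m\leq\ell-\alpha+1$ so $m+1\leq\ell-\alpha+2$. This is exactly where Lemmas~\ref{lemma:minusAlphaTermNotDense}, \ref{lemma:compareDPrimeWithBetterBound}, and Observation~\ref{obs:densityMinus} come in: Lemma~\ref{lemma:compareDPrimeWithBetterBound} rewrites $d'$ as at least $d\cdot\frac{k(\ell+1)+1}{k+1}-\frac{k}{2}\ell(\ell+1)$, and then the inequality $\frac{e(K)+\sum e(C)}{v(K)+\sum v(C)}\geq\frac{d}{d+k+1}$ is verified by checking, via Observation~\ref{obs:densityMinus}, that subtracting the "$\alpha$-deficit" (the edges $K$ lacks relative to $d'$, together with the corresponding missing children, whose density is bounded by Lemma~\ref{lemma:minusAlphaTermNotDense}) from an already-dense configuration keeps it dense. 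I expect the main obstacle to be bookkeeping the $\alpha$ case correctly — getting the child-size lower bound $\sum e(C)\geq m\alpha+\binom{m}{2}$ from the cumulative upper bounds of Lemma~\ref{lemma:numSmallChildren}, and then assembling the three technical lemmas so that the $\alpha$-dependent terms cancel in the right direction; the raw inequality itself is routine arithmetic once the pieces are in place.
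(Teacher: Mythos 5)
There is a genuine gap, and it is the counting step at the heart of your argument. Lemma \ref{lemma:numSmallChildren} is stated for the \emph{fixed} blue tree $\blueTree(\mathcal T^*)$ chosen at the start of Section \ref{sec:boundNumSmallChildren}; it bounds only the children of $K$ generated by that one tree. Since there are $k$ blue spanning trees, the correct bound on the number of children of $K$ with at most $\ell'$ edges is $k(\ell'-\alpha+1)$, not $\ell'-\alpha+1$ (this is exactly fact a) in the paper's proof). Consequently your claims $m=|\mathcal C(K)|\le \ell-\alpha+1$, ``the $i$-th smallest child size is at least $\alpha+i-1$'', and hence $\sum_C e(C)\ge m\alpha+\binom{m}{2}$ are all too strong: up to $k$ children may share each size, so the worst case is $k$ children of each size $i\in\{\alpha,\dots,\ell\}$, i.e.\ $m=k(\ell-\alpha+1)$ and $\sum_C e(C)= k\sum_{i=\alpha}^{\ell} i$. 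Your $\alpha=0$ shortcut fails for the same reason and for an additional one: you use $m+1\le k+1$ via ``$m\le\ell+1\le k+1$'', but $m$ can be as large as $k(\ell+1)$, and $\ell+1\le k+1$ is false in general (e.g.\ $k=1$, $d$ large gives $\ell\approx d/2$).

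Once the factor $k$ is restored, your reduction to $(k+1)\bigl(e(K)+\sum_C e(C)\bigr)\ge d(m+1)$ is still fine, but the resulting inequality is substantially tighter than the one you propose to check (comparing $d'-\alpha$ against $\tfrac{d}{k+1}(\ell-\alpha+2)$ does not suffice); what must be verified is
\[
(k+1)\Bigl(d'-\alpha+k\textstyle\sum_{i=\alpha}^{\ell} i\Bigr)\;\ge\; d\bigl(k(\ell-\alpha+1)+1\bigr),
\]
which uses the full strength of $d'=d+\tfrac{k}{2}\ell(\ell+1)$ and is exactly the chain of inequalities the paper carries out via Lemma \ref{lemma:compareDPrimeWithBetterBound}, Lemma \ref{lemma:minusAlphaTermNotDense} and Observation \ref{obs:densityMinus}. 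So your overall skeleton (worst-case child configuration, lower bound $e(C)\ge\alpha$ from Lemma \ref{lemma:sumOfChildRelation}, then the three technical lemmas) coincides with the paper's, and your observation that the displayed sums should read $e(C)$ and $v(C)$ is correct, but as written the proof does not go through until the per-tree versus all-$k$-trees counting is fixed and the final inequality is re-verified at the correct extremal configuration.
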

\begin{proof}
    First, suppose that $K$ does not have small children. Then $e(K) \geq \ell+1$ by Lemma \ref{lemma:sumOfChildRelation} and the lemma holds by Observation \ref{obs:ellSmallerDensity}.\\
    Thus, let $K$ have a small child and suppose that the lemma is not true. In the following $\alpha$ is defined as in Notation \ref{notation:alpha}. We enumerate some facts that we will use.
    \begin{enumerate}[a)]
        \item For any $\ell' \inZeroToEll$, $K$ has at most $k (\ell' - \alpha + 1)$ small children each having at most $\ell'$ edges by Lemma \ref{lemma:numSmallChildren}.
        \item By Lemma \ref{lemma:sumOfChildRelation} we have that every small child of $K$ has at least $\alpha$ edges.
        \item Let $\ell' \inZeroToEll$. Note that adding $\ell'$ to the nominator and $\ell' + 1$ to the denominator of a fraction that is smaller than $d/(d+k+1)$ again results in a smaller fraction than $d/(d+k+1)$ since $\ell / (\ell + 1) < d/(d+k+1)$.
        \item Note that for $1 < a < b$ we have that $\frac{a-1}{b-1} < \frac{a}{b}$. Thus, informally speaking, decreasing the size of a small child also decreases
        $\frac{e(K) + \sum_{C \in \mathcal C(K)} e(K)}{v(K) + \sum_{C \in \mathcal C(K)} v(K)}$.
    \end{enumerate}
    Now, we prove the lemma. In the first ``$\geq$'' we use b), the (possibly repeated) usage of c) and d) while still being compliant with a).
    
    \begin{align*}
        \frac{d}{d+k+1} 
        &> \frac{e(K) + \sum_{C \in \mathcal C(K)} e(K)}{v(K) + \sum_{C \in \mathcal C(K)} v(K)}\\
        &\geq \frac{e(K) + k\sum_{i = \alpha}^\ell i}
        {e(K) + 1 + k\sum_{i = \alpha}^\ell (i+1)}\\
        &\geq \frac{d' - \alpha     + k\sum_{i = \alpha}^\ell i}
        {d' - \alpha + 1 + k\sum_{i = \alpha}^\ell i + k(\ell + 1) - k\alpha}\\
        \overset{\text{Obs.\ \ref{obs:compareDPrimeWithBetterBound}}}&{\geq}
        \frac{
            d \cdot \frac{k(\ell + 1) + 1}{k+1}                - (\alpha + k \sum_{i=0}^{\alpha-1} i)
        }
        {
            d \cdot \frac{k(\ell + 1) + 1}{k+1}+(k(\ell+1)+1)  - (\alpha + k \sum_{i=0}^{\alpha-1} i + k\alpha)
        }\\
        \overset{\begin{subarray}{c}\text{Lemma \ref{lemma:minusAlphaTermNotDense},}\\ \text{Obs.\ \ref{obs:densityMinus}}\end{subarray}}&{\geq} 
        \frac{d}{d+k+1},
    \end{align*}
    which is a contradiction.
\end{proof}

Now, we are able to prove Theorem \ref{thm:approxSNDTC}.

\begin{proofOfMainTheorem}
    Let $\mathcal K$ be the set of red components of $\explSG$ that are not small. By Lemma \ref{lemma:smallNoSmallChildren} we have that the set of red components of $\explSG$ is
    \[
    \bigcup_{K \in \mathcal K} (\{K\} \cup \bigcup_{C \in \mathcal C(K)} \{C\}).
    \]
    Note that in this representation every component is enumerated exactly once.
    Thus, by Lemma \ref{lemma:rootNoChildren} we have that
    \[
    \frac{e(R^*) + \sum_{C \in \mathcal C(R^*)} e(C)}{v(R^*) + \sum_{C \in \mathcal C(R^*)} v(C)}
    = \frac{e(R^*)}{e(R^*) + 1}
    > \frac{d'}{d' + 1}
    > \density.
    \]
    By Lemma \ref{lemma:densityKPlusChildren} we have that
    \[
    \frac{e_r(\explSG)}{v(\explSG)}
    = \frac{\sum_{K \in \mathcal K} \big(e(K) + \sum_{C \in \mathcal C(K)}e(C)\big)}{\sum_{K \in \mathcal K} \big(v(K) + \sum_{C \in \mathcal C(K)}v(C)\big)}
    > \density,
    \]
    which is a contradiction to Lemma \ref{lemma:densityRedExplSG}.
\end{proofOfMainTheorem}

\printbibliography

\end{document}